\theoremstyle{plain}
\newtheorem{thm}{Theorem}[section]
\newtheorem{lem}{Lemma}[section]
\theoremstyle{definition}
\theoremstyle{remark}
\newtheorem{rem}{\textit{Remark}}[section]
\let\c@equation\c@thm
\numberwithin{equation}{section}
\DeclareMathOperator{\supp}{\mathrm{supp}}
\newcommand\underrel[3][]{\mathrel{\mathop{#3}\limits_{%
			\ifx c#1\relax\mathclap{#2}\else#2\fi}}}
\title[Zakharov-Kuznetsov decay]{On local energy decay for large solutions of the Zakharov-Kuznetsov equation}
\author[A. M\'endez]{Argenis J. Mendez$^{*}$}
\address{Center for mathematical Modeling, Universidad de Chile, Santiago de Chile.}
\email{amendez@dim.uchile.cl}
\thanks{$^{*}$ A.M. work was partially supported by CMM Conicyt PIA AFB170001.}
\author[C. Mu\~noz]{Claudio Mu\~noz$^{\lor}$}
\address{CNRS and Departamento de Ingenier\'{\i}a Matem\'atica and Centro
de Modelamiento Matem\'atico (UMI 2807 CNRS), Universidad de Chile, Casilla
170 Correo 3, Santiago, Chile.}
\email{cmunoz@dim.uchile.cl}
\thanks{$^{\lor}$ C. M. work was funded in part by Chilean research grants ANID FONDECYT 1191412, project France-Chile
ECOS-Sud C18E06, MathAmSud EEQUADDII 20-MATH-04, and CMM Conicyt PIA AFB170001.}
\author[F. Poblete]{Felipe Poblete$^{\ocirc}$}
\address{Instituto de Ciencias F\'isicas y Matem\'aticas,
Facultad de Ciencias, Universidad Austral de Chile,
    Valdivia, Chile.}
\email{felipe.poblete@uach.cl}
\thanks{$^{\ocirc}$ F. P. work  is partially supported by  ANID projects FONDECYT/Iniciaci\'on 11181263 and FONDECYT/Regular 1170466.}
\author[J. C. Pozo]{Juan Carlos Pozo$^{\land}$}
\address{Departamento de Matem\'aticas, Facultad de Ciencias, Universidad de Chile.}
\email{jpozo@ug.uchile.cl}
\thanks{$^{\land}$ J.C.P. work was partially supported by Chilean research grant FONDECYT 1181084.}
\subjclass{Primary: 35Q53. Secondary: 35Q05}
\keywords{Zakharov-Kuznetsov, decay, scattering.}	
\date{\today}
\begin{document}

\begin{abstract}
We consider the Zakharov-Kutznesov (ZK) equation posed in $\mathbb R^d$, with $d=2$ and $3$. Both equations are globally well-posed in $L^2(\mathbb R^d)$. In this paper, we prove local energy decay of global solutions: if $u(t)$ is a solution to ZK with data in $L^2(\mathbb R^d)$, then
\[
\liminf_{t\rightarrow \infty}\int_{\Omega_d(t)}u^{2}({\bf x},t)\mathrm{d}{\bf x}=0,
\]
for suitable regions of space $\Omega_d(t)\subseteq \mathbb R^d$ around the origin, growing unbounded in time, not containing the soliton region. We also prove local decay for $H^1(\mathbb R^d)$ solutions. As a byproduct, our results extend decay properties for KdV and quartic KdV equations proved by Gustavo Ponce and the second author. Sequential rates of decay and other strong decay results are also provided as well.
\end{abstract}

\maketitle

\section{Introduction and Main Results}

In this paper we consider the Zakharov-Kutznesov equation (ZK),
\begin{equation}\label{ZK:Eq}
\partial_t u+\partial_x\Delta u+u\partial_x u=0,  \tag{ZK}
\end{equation}
where $u=u({\bf x},t)\in\mathbb{R}$, $t\in\mathbb{R}$ and ${\bf x}\in\mathbb{R}^d$, with $d=2,3$. In Physics, \eqref{ZK:Eq} arises as an asymptotic model of wave propagation in a magnetized plasma,  see e.g. \cite{Sulem:Sulem:1999,Bittencourt:1986}. It was originally proposed by Zakharov and Kuznetsov in \cite{ZK} for $d = 3$, see also \cite{Laedke:Spatschek:1982} for a formal derivation. Additionally, equation \eqref{ZK:Eq}  is a natural multi-dimensional generalization of the Korteweg-de Vries (KdV) equation (which is the case $d=1$). In particular, it has solitary wave solutions, or solitons (see Subsection \ref{literatura}).

\medskip

Very recently it was proved that the two and three dimensional ZK equations are globally well-posed in $L^2$ and $H^1$ \cite{Kinoshita:Arxiv:2019,Herr:Kinoshita:Arxiv:2020}. This global behavior can be seen as a consequence of the fact that \eqref{ZK:Eq} enjoys conservation of mass and energy:
\[
\int_{\mathbb R^d} u^2({\bf x} ,t)d{\bf x}=\hbox{const.}, \qquad \frac12 \int_{\mathbb R^d} |\nabla u|^2({\bf x} ,t)d{\bf x} - \frac13 \int_{\mathbb R^d} u^3({\bf x} ,t)d{\bf x}=\hbox{const.}
\]
In this paper, our main goal is to study the asymptotic behavior of these global ZK solutions, under minimal assumptions (essentially, data only in $L^2(\mathbb R^d)$ or $H^1(\mathbb R^d)$). In particular, we shall describe the dynamics in local regions of space where solitons are absent. No scattering seems to be available for 2D and 3D \eqref{ZK:Eq}, not even in the small data case, except if the nonlinearity is sufficiently large \cite{MR2950463}.

\medskip

First, we consider the two dimensional case. We always assume $t\geq 2$. Let $\Omega(t)$ denote the following rectangular box (see Fig. \ref{fig:1})
\begin{equation}\label{brconditions}
	\Omega(t):=  \left\{(x,y)\in\mathbb{R}^{2}\, ~ \Big|\, ~  |x| < t^b \,\wedge |y|< t^{br} \right\}, \quad  \frac13<r<  3,    \quad 0<b< \frac{2}{3+r}.
\end{equation}
(Note that $b<\frac35$ and $br<1$.) Under rough data assumptions, we show decay along a sequence of times.

\begin{thm}[$L^2$-decay in 2d]\label{Thmdim2L2}
	Suppose that $u_0\in L^2(\mathbb R^2)$ and let $u=u(x,y,t)$ be the bounded in time solution  to 2D  \eqref{ZK:Eq} such that $u\in C\left(\mathbb{R}: L^2(\mathbb{R}^{2})\right)$. Then
	\begin{equation}\label{cero2D}
	\liminf_{t\rightarrow \infty}\int_{\Omega(t)}u^{2}(x,y,t)\mathrm{d}x\,\mathrm{d}y=0.
	\end{equation}
	Moreover, there exist constant $ C_0>0$ and an increasing sequence of times $t_n\to +\infty$ such that
	\begin{equation}\label{secuencia2D}
	\int_{\Omega(t_n)}u^{2}(x,y,t_n)\mathrm{d}x\,\mathrm{d}y\leq \frac{C_0}{\ln^{\frac{1}{b}-1} (t_n) }.
	\end{equation}
\end{thm}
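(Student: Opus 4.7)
The proof will be based on a weighted virial identity of Kato type localized to the anisotropic box $\Omega(t)$, extending to two spatial dimensions the strategy used by Ponce and Mu\~noz for KdV. Introduce a smooth, non-negative, bounded weight
\[
\varphi(x,y,t)=\phi(x/t^{b})\,\psi(y/t^{br}),
\]
with $\phi,\psi$ smooth and chosen so that $\varphi\lesssim\mathbf{1}_{\Omega(t)}$, so that $-\phi'$ is sign-definite on the relevant region of $\mathbb{R}$, and so that $I(t):=\int\varphi(\cdot,t)u^{2}(\cdot,t)\,dx\,dy\le\|u_{0}\|_{L^{2}}^{2}$ by conservation of mass. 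The scales $t^{b}$ and $t^{br}$ are matched both to the geometry of $\Omega(t)$ and to the anisotropy of $\partial_{x}\Delta$ acting on $(x,y)$.

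Differentiating $I(t)$ in time, using \eqref{ZK:Eq}, and repeatedly integrating by parts---separately on the $u_{xxx}$ and the mixed $u_{xyy}$ contributions of $\partial_{x}\Delta u$, and using $-2\int\varphi u\cdot uu_{x}=\tfrac{2}{3}\int\varphi_{x}u^{3}$ for the nonlinearity---yields
\[
\frac{dI}{dt}=\int\varphi_{t}u^{2}-\int\varphi_{x}(3u_{x}^{2}+u_{y}^{2})-2\int\varphi_{y}u_{x}u_{y}+\int(\varphi_{xxx}+\varphi_{xyy})u^{2}+\tfrac{2}{3}\int\varphi_{x}u^{3}.
\]
Choosing $\phi'<0$, the second and third terms combine, provided $3\varphi_{x}^{2}\ge\varphi_{y}^{2}$, into a non-negative quadratic form in $(u_{x},u_{y})$. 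The remaining quantities are remainders of sizes $\varphi_{t}=O(t^{-1})$, $\varphi_{xxx}=O(t^{-3b})$, $\varphi_{xyy}=O(t^{-b(1+2r)})$. The admissible ranges $\tfrac{1}{3}<r<3$ and $0<b<\tfrac{2}{3+r}$ are precisely what allow, after further integrations by parts absorbing the borderline remainders into the main positive term, the cumulative error on $[T_{0},T]$ to remain $O(\log T)$.

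Integrating in time and using $I(t)\le\|u_{0}\|_{L^{2}}^{2}$ then yields a Kato-smoothing estimate
\[
\int_{T_{0}}^{T}\!\int(-\varphi_{x})(3u_{x}^{2}+u_{y}^{2})\,dx\,dy\,dt\;\lesssim\;1+\log T.
\]
A Poincar\'e inequality in $x$ along horizontal slices, together with the support of $\varphi_{x}$, converts this into a weighted bound on $\int_{\Omega(t)}u^{2}$, from which a dyadic pigeonhole over $T_{n}\sim 2^{n}$ extracts the sequence $t_{n}\to\infty$ satisfying $\int_{\Omega(t_{n})}u^{2}\le C_{0}(\log t_{n})^{-(1/b-1)}$; the exponent $1/b-1$ arises by balancing the $O(\log t_{n})$ cumulative error against the $\sim\log t_{n}$ dyadic scales available in $[T_{0},t_{n}]$, weighted by $\varphi_{x}\sim t^{-b}$, and the limit-inferior statement \eqref{cero2D} is then an immediate corollary. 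The main obstacle is the cubic term $\int\varphi_{x}u^{3}$: in $\mathbb R^{2}$ it cannot be controlled a priori from $L^{2}$ data, and will be closed self-consistently by combining the local $H^{1}$ control provided by the Kato smoothing itself with Gagliardo--Nirenberg, so as to absorb $\int\varphi_{x}u^{3}$ into a fraction of the main positive term plus an integrable-in-time remainder; a secondary obstacle, the loss of pointwise positivity of the quadratic form in $(u_{x},u_{y})$ in the subrange $r<1$, is resolved by an additional integration by parts on $-2\int\varphi_{y}u_{x}u_{y}$, trading the mixed term for a remainder of the same order as the already-controlled $\varphi_{xyy}u^{2}$.
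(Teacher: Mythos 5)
Your plan is built on the quadratic (Kato-type) virial $I(t)=\int\varphi\,u^{2}$, but the statement you are proving assumes only $u_0\in L^2(\mathbb R^2)$, and at that regularity your argument cannot be closed. Differentiating $I(t)$ produces the cubic term $\tfrac23\int\varphi_x u^3$, and in 2D any Gagliardo--Nirenberg or Kenig--Martel style bound for a localized $\int|u|^3$ costs one factor of $\|\nabla u(t)\|_{L^2}$ (or at least a local-in-space $H^1$ quantity at \emph{each fixed time}), which is simply not finite for $L^2$ data. The "self-consistent" closure you propose is circular: the Kato smoothing you want to invoke is only a space-time integrated bound, it is itself what you are in the middle of deriving, and it cannot be bootstrapped without an a priori pointwise-in-time gradient bound. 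This is exactly why the paper reserves the quadratic virial $\mathcal Q(t)=\frac1{\eta(t)}\int u^2\psi_{\sigma'}\phi_{\delta_2}$ for the $H^1$ theorem (where $\|u\|_{L^\infty_t H^1}$ is available and where the already-proved $L^2$ result is used to absorb the cubic term), while the $L^2$ theorem is proved with a functional \emph{linear} in $u$: $\Xi(t)=\frac1{\eta(t)}\int u\,\psi_{\sigma}(x/\lambda_1)\phi_{\delta_1}(x/\lambda_1^{q})\phi_{\delta_2}(y/\lambda_2)$. There the nonlinearity $u\partial_xu=\partial_x(u^2/2)$ itself generates the sign-definite local $\int u^2$ term (the quantity to be estimated), every other term is linear or quadratic in $u$ and is bounded by Cauchy--Schwarz against the conserved mass, and no cubic or gradient terms ever appear; the compensating factor $\eta(t)$ and the extra localization $\phi_{\delta_1}(x/\lambda_1^{q})$, $q>1$, are what make this $L^1$-type functional finite for $L^2$ data.

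Two further steps of your outline would also fail as written. First, your smoothing estimate controls $\iint(-\varphi_x)(3u_x^2+u_y^2)$, and you propose to convert this into $\int_{\Omega(t)}u^2$ by a Poincar\'e inequality on horizontal slices; but $u$ has no vanishing or mean-zero property on the support of $\varphi_x$, and even granting one, the Poincar\'e constant on an interval of length $\sim t^{b}$ degrades by $t^{2b}$, so no bound on the local mass follows. In the paper the local $L^2$ bound is obtained directly, not through the gradient. Second, your fix for the mixed term $-2\int\varphi_y u_xu_y$ in the range $r<1$ (integrating by parts to trade it for a $\varphi_{xyy}u^2$-type remainder) is not available: $u_xu_y$ is not a total derivative of a quadratic expression in $u$ (one only has $\partial_x(uu_y)+\partial_y(uu_x)=\partial_x\partial_y(u^2)$), so the term is an irreducible gradient quadratic form; indeed the paper keeps the restriction $r>1$ even in the $H^1$ theorem and explicitly leaves its removal open. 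For the $L^2$ statement itself this issue disappears only because the linear-in-$u$ virial never produces gradient terms, which is also why the full range $\tfrac13<r<3$ is admissible there.
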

The previous result holds for arbitrarily large data in $L^2$, despite the fact that 2D ZK is scattering critical (the standard scattering trick is $u\partial_xu \sim \frac1t u$, see Faminskii \cite{Faminskii:1995} for required linear decay estimates). We also present in \eqref{secuencia2D} a mild decay rate valid along a sequence of times growing to infinity. No $L^\infty$ decay seems reasonable here because the $H^s$ regularity needed is at least $s>1$. Since $\Omega(t)$ grows with time, it contains any compact region in $\mathbb R^2$, but it does not contain the soliton region $x\sim t$. However, combining \eqref{cero2D} with the asymptotic stability of the 2D ZK soliton proved in \cite{Cote:Munoz:Pilod:Simpson:2016} (see also Subsection \ref{literatura}), a better description of the soliton dynamics is obtained. In that sense, Theorem \ref{Thmdim2L2} and the results below can be understood as one step forward the validity of the soliton resolution conjecture for \eqref{ZK:Eq}. We also state in Lemmas \ref{lemdecay} and \ref{lemmedida} some interesting consequences of \eqref{cero2D}, which we believe are of independent interest.

\medskip

\begin{figure}[h!]
\begin{center}
\begin{tikzpicture}[scale=0.7]
\filldraw[thick, color=lightgray!30] (0,0) -- (4,0) -- (4,4) -- (0,4) -- (0,0);
\draw[thick, dashed] (0,0) -- (4,0) -- (4,4) -- (0,4) -- (0,0);
\draw[thick,dashed] (5,-1) -- (5,5);
\draw[->] (-1,2) -- (6,2) node[below] {$x$};
\draw[->] (2,-1) -- (2,5) node[right] {$y$};
\node at (5.7,2.5){$x\sim t$};
\node at (2.5,3.7){$t^{br}$};
\node at (3.7,2.5){$t^b$};
\node at (1,1){$\Omega(t)$};
\end{tikzpicture}
\qquad
\begin{tikzpicture}[scale=0.7]
\filldraw[thick, color=lightgray!30] (1,1) -- (4,1) -- (4,4) -- (1,4) -- (1,1);
\draw[thick, dashed] (1,1) -- (4,1) -- (4,4) -- (1,4) -- (1,1);
\draw[thick,dashed] (5,-1) -- (5,5);
\draw[->] (-1,0) -- (6,0) node[below] {$x$};
\draw[->] (0,-1) -- (0,5) node[right] {$y$};
\node at (5.8,2.3){$x\sim t$};
\node at (2.5,3.7){$t^{br}$};
\node at (3.7,2.5){$t^b$};
\node at (2,2){$\Omega(t)$};
\node at (2.5,0){$\bullet$};
\node at (2.5,0.5){$t^m$};
\node at (0,2.5){$\bullet$};
\node at (0.5,2.5){$t^n$};
\end{tikzpicture}
\end{center}
\caption{(\emph{Left}). Schematic figure depicting the set $\Omega(t)$, in the centered case, as defined in \eqref{brconditions}. Recall that $\frac13<r<3$, $0\leq b<\frac{2}{3+r}$ and $0\leq br<\frac{2r}{3+r}$. This set corresponds to the region of the plane where Theorem  \ref{Thmdim2L2} holds. Some important limiting cases are $r\approx \frac13$, for which $b<\frac35$ and $br<\frac15$; and $r\approx 3$, for which $b<\frac13$ and $br<1$. Here $x\sim t$ represents the soliton region. Theorem \ref{Thmdim2H1} requires $1<r<3$. (\emph{Right}). Schematic representation of $\Omega(t)$ in the non-centered case, see Remark \ref{NON} for the values of $m$ and $n$.}\label{fig:1}
\end{figure}
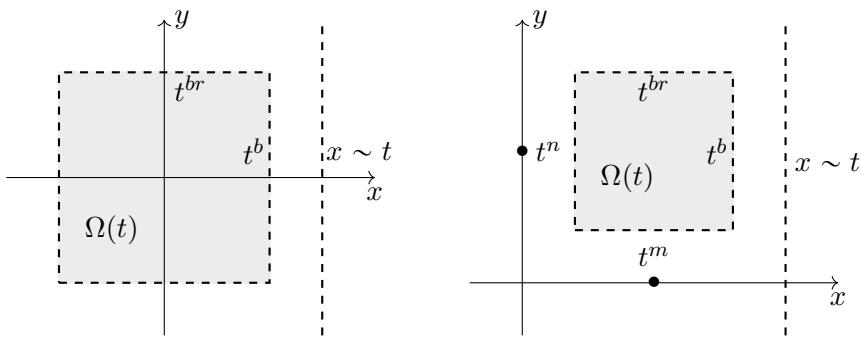

\begin{rem}\label{NON}
Theorem \ref{Thmdim2L2} can be extended to the non-centered case with some minor modifications. Indeed, \eqref{cero2D} still holds if $\Omega(t)$ is given by the expression
\begin{equation}\label{brconditions_non}
\begin{aligned}
	\Omega(t):= &~{}\Omega_{\delta_{1},\delta_{2}}(t):=  \left\{(x,y)\in\mathbb{R}^{2}\, ~ \Big|\, ~  |x \pm t^m |< t^{b} \,\wedge |y \pm t^n |< t^{br} \right\},\\
	&~{}   \frac13<r<  3,    \quad 0<b< \frac{2}{3+r}, \quad 0\leq m< 1-\frac12 b(1+r), \quad 0\leq n< 1-\frac12 b(3-r). 
	   \end{aligned}
\end{equation}
Note that when $r\approx 3$, one has the maximum value for $n$, which is $\approx 1$. At the same time, $br \approx 1$, which makes sense with the fact that one cannot go further proving decay in the $y$ variable, not more than the maximum value of the centered case. However, by making the rectangle smaller if necessary, one can go further in the $x$ variable: take $b$ small; since $m\approx 1$, one can reach the soliton limit $x\sim t$, but the size of the decay window must be small. Clearly one improves the region of decay obtained in the centered case, which is $b<\frac35$; see \eqref{brconditions}. See Subsection \ref{Non} for the proofs.
\end{rem}

\begin{rem}
The area of the region $\Omega(t)$ is not preserved with respect to variations of the parameters $b$ and $r$. The supremum value of the area is obtained in the limit $r= 3$ and $b=\frac13$, which is $t^{\frac43}$. 
\end{rem}

\medskip

Obtaining the remaining $\limsup$ property is left here as an open question, even in the small data case. The related problem about the maximum size of $\Omega(t)$ is very relevant here, since the global $L^2$ norm is always conserved, and positive for nontrivial solutions. One could guess that for $\Omega(t)$ large enough,
\[
\limsup_{t\rightarrow \infty}\int_{\Omega(t)}u^{2}(x,y,t)\mathrm{d}x\,\mathrm{d}y>0,
\]
and therefore a smaller $\Omega(t)$ than in our results is probably needed. In this direction, if $u(t)$ has better decay properties, such as being in $L^\infty ([0,\infty),L^1(\mathbb R^2))$, then the zero $\limsup$ part can be recovered following \cite{MR3936126}. However, having such strong decay is extremely far from being known in the ZK case, except if the solution is a soliton.

\medskip

Coming back to \eqref{cero2D}, some key results in the dispersive PDE literature have been established primarily via a sequence of times. We mention the work by Duyckaerts, Kenig, Jia and Merle \cite{MR3678502} for the proof of the soliton resolution conjecture in the focusing, energy critical wave equation. Unlike the wave equation, our problem is energy subcritical in nature, and of infinite speed of propagation. In particular, an infinite number of solitons could emerge from large $L^2$ data (see \cite{Valet} for the existence of multi-solitons). The conclusion stated in \eqref{cero2D} was used by Tao in \cite{MR2091393} as a condition to prove a weak form of soliton resolution for cubic focusing NLS in 3D under bounded finite $H^1$ norm. Note that the problem considered by Tao is mass supercritical, but energy subcritical, and these restrictions are key  whenever scattering is treated. The $L^2$ subcritical condition on \eqref{ZK:Eq}, and more importantly the scattering critical condition, make things definitely more subtle. See also \cite{MR1616917,MR1047566} for early but fundamental results involving sequential in time convergence.

\medskip

Recovering the decay of the gradient of $u$ in the case of $H^1$ data requires a slight modification of the parameters in \eqref{brconditions}.
\begin{thm}[$H^1$-decay in 2d]\label{Thmdim2H1}
	Suppose additionally that $u_0\in H^1(\mathbb R^2)$, and let $u=u(x,y,t)$ be the solution  to 2D ZK \eqref{ZK:Eq} such that $u\in C\left(\mathbb{R}: H^1(\mathbb{R}^{2})\right)$.
	If now $1<r<3$ in \eqref{brconditions}, one has
	\begin{equation*}
	\liminf_{t\rightarrow \infty}\int_{\Omega(t)} (u^2+ |\nabla u|^{2})(x,y,t)\mathrm{d}x\,\mathrm{d}y=0.
	\end{equation*}
	A similar decay rate as in \eqref{secuencia2D} also holds in this case.
\end{thm}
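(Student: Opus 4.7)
The plan is to enlarge the virial functional of Theorem~\ref{Thmdim2L2} by a piece capturing $|\nabla u|^2$. First, I observe that since $u_0\in H^1(\mathbb{R}^2)$, the conservation of mass and energy together with the two-dimensional Gagliardo--Nirenberg inequality yield $u\in L^\infty_t H^1_{x,y}$ with norm depending only on $\|u_0\|_{H^1}$; this a priori bound is essential to absorb cubic errors. Keeping the same cutoff $\phi(x/t^b,y/t^{br})$ used in the proof of Theorem~\ref{Thmdim2L2}, I would work with the augmented functional
$$
J(t) := \int_{\mathbb{R}^{2}} \phi\!\left(\tfrac{x}{t^{b}},\tfrac{y}{t^{br}}\right)\bigl(u^{2} + \lambda\,|\nabla u|^{2}\bigr)(x,y,t)\,dx\,dy,
$$
for a small $\lambda>0$ to be tuned. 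The $u^2$-piece reproduces the identity of Theorem~\ref{Thmdim2L2}. The new input is the time derivative of the $|\nabla u|^2$-piece, obtained by expanding $\partial_t(\phi\,|\nabla u|^2)$, substituting $u_t$ from \eqref{ZK:Eq}, and integrating by parts several times; this produces a positive quadratic form in the second derivatives of $u$ weighted by $\partial_x\phi$, a time-scaling contribution of size $O(t^{-1})\,\|\nabla u\|_{L^2}^2$, and cubic remainders of the schematic form $\int \phi\, u\,|\nabla u|^2$ and $\int \partial_x\phi\, u\,|\nabla u|^2$.

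The main obstacle is to ensure that all error terms are time-integrable and are strictly dominated by the positive quadratic form. Because of the anisotropic scaling $\partial_x\phi\sim t^{-b}$, $\partial_y\phi\sim t^{-br}$, $\partial_y^2\phi\sim t^{-2br}$, and $\partial_{xy}\phi\sim t^{-b(1+r)}$, a Cauchy--Schwarz absorption of the mixed contributions generated by $\partial_{xy}\phi$, $\partial_y^2\phi$ and $\partial_y^3\phi$ acting on $\nabla u$ into the diagonal weight $\partial_x\phi\,|\nabla u|^2$ forces the strengthened restriction $r>1$, precisely the hypothesis imposed on $\Omega(t)$ in this theorem. The cubic remainders are controlled by combining the uniform $H^1$ bound on $u$ with two-dimensional Gagliardo--Nirenberg inequalities (e.g. $\|u\|_{L^4}\lesssim \|u\|_{L^2}^{1/2}\|\nabla u\|_{L^2}^{1/2}$), which compensates for the absence of an $H^1\hookrightarrow L^\infty$ embedding in dimension $2$.

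Integrating $J'(t)$ on any interval $[2,T]$ then yields a uniform-in-$T$ bound on the accumulated weighted quadratic form in the second derivatives, which in turn dominates an integrated quantity involving $\partial_x\phi\,|\nabla u|^2$. The same time-averaging/pigeonhole argument used in Theorem~\ref{Thmdim2L2} produces an increasing sequence $t_n\to+\infty$ along which $J(t_n)\to 0$ with the same logarithmic rate as in \eqref{secuencia2D}. Since $\phi\equiv 1$ on $\Omega(t)$, this gives the claimed vanishing of the localized $L^2+\dot H^1$ energy and concludes the proof.
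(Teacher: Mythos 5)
Your plan has a structural gap at the step that is supposed to produce the gradient decay. Differentiating the $\lambda\,|\nabla u|^{2}$ piece of $J(t)$ under the ZK flow yields, after integration by parts, a smoothing term that is quadratic in the \emph{second} derivatives of $u$ (weighted by the $x$-derivative of the weight), not in $\nabla u$; the assertion that the accumulated second-derivative quadratic form ``in turn dominates an integrated quantity involving $\partial_x\phi\,|\nabla u|^{2}$'' has no justification and is false in general — one cannot bound lower-order local norms by higher-order ones. In fact the localized $|\nabla u|^{2}$ control must come from the $u^{2}$ piece: the Kato-type identity for a weighted mass gives $-\int\bigl(3(\partial_xu)^2+(\partial_yu)^2\bigr)\times(\partial_x\text{-weight})$, and this is exactly how the paper proceeds, with the functional $\mathcal{Q}(t)$ in \eqref{defQd2}, which carries the $1/\eta(t)$ normalization and a weight \emph{monotone in }$x$ (so that the quadratic form is signed; with your compactly supported $\phi(x/t^b,y/t^{br})$, $\partial_x\phi$ changes sign and no coercive term survives). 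You misattribute the role of the $u^{2}$ piece (``reproduces the identity of Theorem \ref{Thmdim2L2}'': that identity came from the functional $\Xi$, which is \emph{linear} in $u$), and consequently you never treat the cubic error $\int u^{3}\,\phi_{\sigma'}\phi_{\delta_2}$ that the $u^{2}$ piece generates. This is the hardest term of the proof: the paper controls it by a Kenig--Martel cell decomposition, Gagliardo--Nirenberg on unit cells together with the uniform $H^{1}$ bound, and the comparability of the exponential weight on unit cells, reducing it to the localized $u^{2}$ quantity whose time-integrability is precisely Lemma \ref{lem:d2L2} — i.e.\ Theorem \ref{Thmdim2L2} enters in an essential, quantitative way, not merely as a template.

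Two further points. First, your cubic remainders of the form $\int\phi\,u\,|\nabla u|^{2}$ are not time-integrable just from Gagliardo--Nirenberg and the a priori $H^{1}$ bound: without the $1/\eta(t)$ factor they are merely $O(1)$ in time, and even with it one faces a $u(\nabla u)^{2}$ local term for \emph{large} data, which is exactly the difficulty the paper avoids by never localizing the energy density. Second, you do identify correctly where the restriction $r>1$ originates (the mixed $\partial_xu\,\partial_yu$ contribution weighted by the $y$-derivative of the cutoff); in the paper this is the term $A_{1,1,3}$ in \eqref{divisionA11}, bounded crudely by $\|u\|_{L^\infty_tH^1}^{2}/(\eta(t)\lambda_2(t))$, whose integrability is equivalent to $r>1$. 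But as written, the functional you propose, the claimed sign structure, and the treatment of the cubic terms do not yield \eqref{IntegraH1}, so the argument does not close.
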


One could guess that by employing the Gr\"unrock-Herr's dilation/rotation trick \cite{Grunrock:Herr:2015} on the ZK variables, the extra condition $r>1$ may be lifted, but as of today it is not clear to us that  such an improvement is possible.

\medskip

The techniques required for the proof of Theorems \ref{Thmdim2L2} and \ref{Thmdim2H1} are reminiscent of the works by Ponce and the second author \cite{MR3936126,MR4021089} in the case of 1D KdV and Benjamin-Ono equations (see also \cite{MR3960140,ACKM} for applications to other 1D models). Here we deal with the ZK model, which contains additional difficulties because of the higher dimension. Additionally, in this paper we lift the $L^1$ conditions posed in \cite{MR3936126,MR4021089} and consider data only in the energy space $L^2$ or $H^1$. The rates of decay that we obtain (see e.g. \eqref{secuencia2D}) are clearly weaker than the ones obtained by assuming much more regularity and decay on the initial data, but in the vastly energy space, it is hard to think about a possible universal rate of decay. Finally, the proof works equally for quadratic and quartic KdV in 1D as well, with some minor modifications, giving relative improvements to the results stated in \cite{MR3936126} (the $L^\infty_t L^1_x$ condition on the solution is lifted, at the expense of only having liminf in the decay property).

\medskip

Indeed, consider quadratic and quartic KdV equations
\begin{equation}\label{gKdV}
\partial_t u + \partial_x (\partial_x^2 u + u^p)=0, \quad p=2,4, \quad u=u(x,t)\in\mathbb R, \quad t,x\in\mathbb R.
\end{equation}
The IVP for these problems is very well-known, global solutions are known for $L^2$ and $H^1$ data, see \cite{MR3308874} for instance. Define
\begin{equation}\label{final_gKdV}
\Omega(t):=\left\{ x\in\mathbb R ~ :~ |x \pm t^n | < t^b \right\},  \quad 0<b< \frac{p}{2p-1}, \quad 0\leq n< 1-\frac{b}2.
\end{equation}
(The $\pm$ signs are considered at the same time.) For this set, we have the following large data sequential decay.
\begin{thm}[Decay in gKdV]\label{Thmdim2L2_KDV}
	Suppose that $u_0\in L^2(\mathbb R)$ if $p=2$, and $u_0\in H^1(\mathbb R)$ if $p=4$. Let $u=u(x,t)$ be the solution to  \eqref{gKdV}. Then
	\begin{equation}\label{cero2D_KdV}
	\liminf_{t\rightarrow \infty}\int_{\Omega(t)}u^{p}(x,t)\mathrm{d}x=0.
	\end{equation}
\end{thm}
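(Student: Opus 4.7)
The plan is to perform the one-dimensional analogue of the weighted virial identity used for 2D ZK in Theorems \ref{Thmdim2L2} and \ref{Thmdim2H1}, exploiting the fact that the region $\Omega(t)$ in \eqref{final_gKdV} is the natural 1D counterpart of the box in \eqref{brconditions}.

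\emph{Virial setup.} Fix a bounded smooth $\varphi:\mathbb R\to\mathbb R$ with $\varphi'\geq 0$, $\varphi'$ a nonnegative bump supported in $[-2,2]$ with $\varphi'\geq c_0>0$ on $[-1,1]$, and all derivatives bounded. With $\lambda(t):=t^b$ and $\rho(t):=\pm t^n$, introduce
\[
\mathcal{I}(t):=\int_{\mathbb R}\varphi\!\left(\frac{x-\rho(t)}{\lambda(t)}\right)u^{2}(x,t)\,dx,\qquad t\geq T_0\geq 2.
\]
Mass conservation (and, for $p=4$, the $H^{1}$-bound granting global existence) gives that $\mathcal{I}$ is uniformly bounded on $[T_0,\infty)$. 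Using \eqref{gKdV} and several integrations by parts one obtains an identity of the schematic form
\[
\frac{d\mathcal{I}}{dt}(t)=\mathcal{D}(t)+\frac{1}{\lambda(t)^{3}}\!\int\!\varphi'''(y)\,u^{2}\,dx-\frac{3}{\lambda(t)}\!\int\!\varphi'(y)\,u_{x}^{2}\,dx+\frac{2p}{(p+1)\lambda(t)}\!\int\!\varphi'(y)\,u^{p+1}\,dx,
\]
where $y=(x-\rho(t))/\lambda(t)$ and $\mathcal{D}(t)$ collects the drift terms proportional to $\rho'/\lambda$ and $\lambda'/\lambda$; the third term is the negative Kato-smoothing contribution and the last is the nonlinearity.

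\emph{Balancing and conclusion.} On $I(t)=\{|y|\leq 2\}$, a spatial interval of length $\sim t^{b}$, the localised Sobolev bound $\|u\|_{L^{\infty}(I(t))}^{2}\lesssim t^{-b}\|u\|_{L^{2}(I(t))}^{2}+t^{b}\|u_{x}\|_{L^{2}(I(t))}^{2}$ combined with the global conservation laws allows one to dominate $\int\varphi' u^{p+1}\,dx\leq \|u\|_{L^{\infty}(I(t))}^{p-1}\int\varphi' u^{2}\,dx$ by a small multiple of the Kato term $\lambda^{-1}\int\varphi' u_{x}^{2}$ plus a time-integrable remainder; this last absorption is exactly what fixes the threshold $b<p/(2p-1)$. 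The drift $\mathcal{D}$ contributes at most $O(1/t)$ once one uses $|y|\leq 2$ on $\mathrm{supp}\,\varphi'$ together with the hypothesis $n<1-b/2$, while the dispersive error $\lambda^{-3}\int\varphi''' u^{2}\sim t^{-3b}\|u_{0}\|_{L^{2}}^{2}$ is integrable in the same range. Rearranging and integrating from $T_0$ to any $T>T_0$ then produces
\[
\int_{T_0}^{T}\frac{1}{t^{b}}\int_{\Omega(t)} u^{p}(x,t)\,dx\,dt\leq C\bigl(\|u_{0}\|_{L^{2}},\|u_{0}\|_{H^{1}}\bigr),
\]
uniformly in $T$. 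Since $b<1$ makes $\int^{\infty}t^{-b}\,dt=\infty$ and the integrand is nonnegative, \eqref{cero2D_KdV} follows by a standard contradiction argument.

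\emph{Main obstacle.} The hardest point is the sharp absorption of the nonlinearity into the Kato term. For $p=2$ no global $H^{1}$-bound is available, so the control of $\int_{\Omega}u^{3}$ must proceed via local Gagliardo-Nirenberg in which the length $t^{b}$ of $I(t)$ enters explicitly, yielding the threshold $b<2/3$. For $p=4$ the global $H^{1}$-bound gives a uniform $L^{\infty}$-bound on $u$ and hence an immediate pointwise domination of $u^{5}$ by $u^{2}$, but the stronger nonlinearity simultaneously forces the smaller range $b<4/7$. Combining the two cases together with the non-centered translation $\pm t^{n}$, whose speed must stay subdominant (giving $n<1-b/2$), is precisely what carves out the admissible parameter region in \eqref{final_gKdV}.
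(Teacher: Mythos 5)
Your quadratic (Kato-smoothing) virial cannot produce the quantity the theorem asks for, and the final step of your argument is a non sequitur. With $\mathcal I(t)=\int \varphi\bigl((x-\rho(t))/\lambda(t)\bigr)u^2\,dx$ the identity you write is correct, but the only terms it generates are the localized smoothing term $-\frac{3}{\lambda}\int\varphi' u_x^2$, the sign-indefinite nonlinear term $\frac{2p}{(p+1)\lambda}\int\varphi' u^{p+1}$ (power $p+1$, odd, no sign), and drift/error terms. After you absorb the nonlinearity into the Kato term, rearranging and integrating yields at best a bound on $\int_{T_0}^{T}\lambda^{-1}\!\int\varphi' u_x^2\,dx\,dt$, i.e.\ averaged decay of the localized $H^1$-density --- it does not yield $\int_{T_0}^{T}t^{-b}\!\int_{\Omega(t)}u^{p}\,dx\,dt\leq C$, because $\int_{\Omega(t)}u^{p}$ (an even power, $p=2$ or $4$) never appears as a main term of your identity. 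In the centered case $n=0$ there is no moving-weight drift to generate a signed local-mass term either, and for $p=2$ the desired quantity is exactly the local mass, which is known not to follow from monotonicity/Kato arguments alone for large solutions; this is precisely why the paper does not use a quadratic functional here. Two further quantitative claims are also off: the drift is of size $\rho'/\lambda\sim t^{\,n-1-b}$, which under $n<1-\tfrac b2$ is only $O(t^{-3b/2})$, not $O(1/t)$, and neither is time-integrable in general; and your $\lambda^{-3}\int\varphi''' u^2\sim t^{-3b}\|u_0\|_{L^2}^2$ is integrable only for $b>\tfrac13$, while \eqref{final_gKdV} allows $b$ arbitrarily small.

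The paper's proof (Section \ref{A}) instead uses a functional \emph{linear} in $u$: $\Xi(t)=\eta(t)^{-1}\int u\,\psi_{\sigma}\bigl(\tilde x/\lambda_1(t)\bigr)\phi_{\delta_1}\bigl(\tilde x/\lambda_1^q(t)\bigr)dx$ with $\tilde x=x-\rho(t)$ and the choices \eqref{defns_gKdV}. Differentiating and using \eqref{gKdV}, the nonlinearity produces directly $\frac{1}{\eta\lambda_1}\int u^{p}\psi_\sigma'\phi_{\delta_1}\,dx$, which is nonnegative because $p$ is even and is exactly the quantity in \eqref{cero2D_KdV}; all remaining terms are linear in $u$, hence controlled by Cauchy--Schwarz and mass conservation (plus the uniform $L^4$ bound from $H^1$ when $p=4$), with the normalization $\eta(t)$ and the second localization at scale $\lambda_1^q$ making them time-integrable under \eqref{defns_gKdV}. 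Integrating \eqref{dT_ppal_gKdV} against the non-integrable weight then gives the liminf. If you want to salvage your route, you would have to explain how a signed multiple of $\int_{\Omega(t)}u^{p}$ arises in your identity; as written, it does not.
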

A similar sequential rate of decay as in \eqref{secuencia2D} can be obtained as well. Note that by making $b$ smaller if necessary, one can almost reach the soliton region $x\sim t$. From the proof itself, if the data is in $H^1$, \eqref{cero2D_KdV} also holds for nonintegrable perturbations of KdV, of the form $u^2+ o(u^2)$, following  \cite{MR3936126}. The proof for cubic KdV ($p=3$, the so-called mKdV) does not work for obvious reasons: there exist periodic-in-time solutions around zero, spatially localized in the Schwartz class, called breathers, which do not decay. See \cite{MR3116324} and references therein for more details on that important case. Finally, for $p=4$, thanks to the Cauchy-Schwarz inequality and \eqref{cero2D_KdV}, there is decay of the $L^2$ norm along sequential times, on any fixed compact set of space. See \cite{MR3936126} and references therein for a detailed description of the gKdV Cauchy problem and the corresponding scattering results. 

\medskip

Let us explain in more detail the idea of proof in Theorems \ref{Thmdim2L2} and \ref{Thmdim2H1}. Given an $L^2$ solution $u(x,y,t)$ of \eqref{ZK:Eq}, we introduce an $L^1$ virial-type functional of the form\footnote{Recall that $\int_{x,y} u$ is formally conserved, and scattering critical for the \eqref{ZK:Eq} scaling $\lambda^2 u( \lambda x,\lambda y,\lambda^2 t )$.}
\begin{equation}\label{Virial}
\Xi(t)=\frac{1}{\eta(t)}\int_{\mathbb{R}^{2}}u(x,y,t) \psi \left(\frac{x}{\lambda_{1}(t)}\right)\phi \left(\frac{x}{\lambda_{1}^q(t)}\right)\phi \left(\frac{y}{\lambda_{2}(t)}\right)\,\mathrm{d}x\mathrm{d}y, \quad q>1,
\end{equation}
in the spirit of Bona-Souganidis-Strauss \cite{MR897729} and Martel-Merle \cite{MR1896235}. Here $\psi$ denotes a smooth increasing and bounded function (e.g. $\tanh$), and $\phi$ is a very localized function (assume $\phi=\hbox{sech}$). The time-dependent parameters $\lambda_1(t)$, $\lambda_2(t)$ and $\eta(t)$ are key for the proof, and will be chosen following special requirements, related to the structure of the spatial region $\Omega(t)$ described in \eqref{brconditions}, among other not less important conditions. There are some differences between $\Xi(t)$ here and the same functional introduced in \cite{MR3936126}, the most important being the double localization in $x,y$ via the function $\phi$, and the introduction of the compensation function $\eta(t)$ (first introduced in \cite{MR4021089}). These procedures make possible to give a meaning for $\Xi(t)$ even for $L^2$ data, but introduce plenty of new error terms that must be controlled with care. This is done by using appropriate choices for $q>1$ and $\lambda_2(t)$ in terms of $\eta(t)$ and $\lambda_1(t)$. Once it is proved that $\Xi(t)$ makes sense, we show that the local $L^2$ integral bound
\[
\int_{\{t\gg1\}} \frac{1}{t\ln t}	
\left(\int_{\Omega(t)} u^{2}(x,y,t) \,\mathrm{d}x\mathrm{d}y\right)\mathrm{d}t \leq C_0 <\infty,
\]
is valid no matter the size of $u$. This last bound is essentially the statement in Theorem \ref{Thmdim2L2},  and is proved analyzing the dynamics of $\Xi(t)$ in the long time regime, in the same spirit as previous work by Martel and Merle \cite{MR1896235}, and more recent works \cite{MR3936126,MR4021089,MR3630087}. Theorem \ref{Thmdim2H1} is not different in nature, but follows the more standard use of Kato smoothing estimates, and the previously proved Theorem \ref{Thmdim2L2}. The more restrictive restriction $r>1$ appears from some interactions between mixed derivatives that require stronger control than other less complicated terms.

\medskip

The techniques used to prove Theorems \ref{Thmdim2L2} and \ref{Thmdim2H1} are sufficiently versatile to provide, as far as we understand, a first proof of decay in the  ZK 3D case. Consider now the region (see Fig. \ref{fig:2})
\begin{equation}\label{brconditions3D}
\begin{aligned}
	& \Omega(t):=  \left\{(x,y,z)\in\mathbb{R}^{3}\, ~ \Big|\, ~ |x| < t^{b}~  \wedge ~   |y|< t^{br_1} ~ \wedge ~  |z|< t^{br_2}\right\},\\
	  & b>0,\quad r_1,r_2>1, \quad   r_1+r_2<3, \quad r_1+1 <3r_2, \quad r_2+1 <3r_1,\quad  b < \frac{2}{3+r_1+r_2}. 
	   \end{aligned}
\end{equation}
See Lemma \ref{easy_P} for a detailed geometric description of this set. Two particularly important cases are $r_1\approx r_2\approx 1$, for which $b\approx \frac25$, $br_1\approx \frac25$ and $br_2\approx \frac25$; and $r_1\approx 1, r_2\approx 2$ (and symmetric case), for which $b\approx \frac13$, $br_1\approx \frac13$ and $br_2\approx \frac23$. For this region we immediately go to the $H^1$ case, proving the following result.
\begin{thm}[Local decay in the 3D case]\label{Thmdim3H1}
	Suppose  $u_0\in H^1(\mathbb R^3)$, and let $u=u(x,y,z,t)$ be the solution  to \eqref{ZK:Eq} in 3D such that $u\in C\left(\mathbb{R}: H^1(\mathbb{R}^{3})\right)$.
	Then
	\begin{equation}\label{cero3d}
	\liminf_{t\rightarrow \infty}\int_{\Omega(t)} \left( u^2 + |\nabla u|^{2} \right)(x,y,z,t)\mathrm{d}x\,\mathrm{d}y\,\mathrm{d}z=0.
	\end{equation}
	A similar decay rate as in \eqref{secuencia2D} also holds in this case.
\end{thm}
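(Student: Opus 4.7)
\textbf{Proof plan for Theorem \ref{Thmdim3H1}.} The strategy is to adapt the virial argument behind Theorems \ref{Thmdim2L2}--\ref{Thmdim2H1} to three dimensions, first producing an $L^{2}$-local decay estimate and then upgrading it to $H^{1}$ via Kato smoothing. Concretely, for parameters $\lambda_{1}(t)\sim t^{b}$, $\lambda_{2}(t)\sim t^{br_{1}}$, $\lambda_{3}(t)\sim t^{br_{2}}$, an exponent $q>1$ and a slowly growing compensator $\eta(t)\sim \ln t$, I will introduce the three-dimensional analogue of \eqref{Virial},
\[
\Xi(t):=\frac{1}{\eta(t)}\int_{\mathbb{R}^{3}} u(\mathbf{x},t)\,\psi\!\left(\frac{x}{\lambda_{1}(t)}\right)\phi\!\left(\frac{x}{\lambda_{1}^{q}(t)}\right)\phi\!\left(\frac{y}{\lambda_{2}(t)}\right)\phi\!\left(\frac{z}{\lambda_{3}(t)}\right)\mathrm{d}\mathbf{x},
\]
with $\psi$ a bounded smooth increasing function and $\phi=\mathrm{sech}$. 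The compensator together with the auxiliary factor $\phi(x/\lambda_{1}^{q})$ makes $\Xi(t)$ well defined for merely $L^{2}(\mathbb{R}^{3})$ data, via Cauchy--Schwarz on the weight.

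Differentiating $\Xi$ in time and substituting $\partial_{t}u=-\partial_{x}^{3}u-\partial_{x}\partial_{y}^{2}u-\partial_{x}\partial_{z}^{2}u-\tfrac{1}{2}\partial_{x}(u^{2})$, integration by parts transfers every derivative onto the weights. The third $x$-derivative delivers the sign-definite smoothing term
\[
\frac{1}{\eta(t)\lambda_{1}(t)^{3}}\int_{\mathbb{R}^{3}}u^{2}\,\psi'\!\left(\frac{x}{\lambda_{1}}\right)\phi\!\left(\frac{x}{\lambda_{1}^{q}}\right)\phi\!\left(\frac{y}{\lambda_{2}}\right)\phi\!\left(\frac{z}{\lambda_{3}}\right)\mathrm{d}\mathbf{x}\gtrsim \frac{1}{t\ln t}\,t^{-3b}\!\int_{\Omega(t)}u^{2}\,\mathrm{d}\mathbf{x},
\]
which, once localized in $\Omega(t)$ via the supports of the $\phi$'s, is exactly the quantity whose integrability gives \eqref{cero3d}. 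Against this I must dominate three families of error terms: (i) the transport terms $\lambda_{i}'/\lambda_{i}$ and $\eta'/\eta$, controlled by $\|u\|_{L^{2}}$ after Cauchy--Schwarz on the weight, which amount to contributions of order $t^{-1}\eta(t)^{-1/2}\lambda_{1}^{1/2}\lambda_{2}^{1/2}\lambda_{3}^{1/2}$; (ii) the mixed-derivative terms $\partial_{x}\partial_{y}^{2}u$ and $\partial_{x}\partial_{z}^{2}u$, which produce after double integration by parts expressions scaled by $\lambda_{1}^{-1}\lambda_{2}^{-2}$ and $\lambda_{1}^{-1}\lambda_{3}^{-2}$, competing with the smoothing term of strength $\lambda_{1}^{-3}$; and (iii) the nonlinear contribution $\tfrac{1}{2\eta\lambda_{1}}\int u^{2}(\psi\phi)'\phi\phi\,\mathrm{d}\mathbf{x}$, which is just an $O(\lambda_{1}^{-1})$ perturbation of the good term and is absorbed by choosing $q$ close to $1$ and using the auxiliary $\phi(x/\lambda_{1}^{q})$.

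The numerology behind \eqref{brconditions3D} then falls into place: the requirement $r_{1}+1<3r_{2}$ together with $r_{2}+1<3r_{1}$ ensures that both mixed-derivative errors $\lambda_{1}^{-1}\lambda_{j}^{-2}$ are dominated by the good term $\lambda_{1}^{-3}$ after integration against the local $L^{2}$ mass, while $r_{1}+r_{2}<3$ and $b<2/(3+r_{1}+r_{2})$ guarantee that the transport and compensator errors are time-integrable. A computation in the spirit of Lemma \ref{easy_P} then yields a differential/integral inequality of Martel--Merle type \cite{MR1896235,MR3936126,MR4021089}, which upon integrating $\Xi'(t)$ over $[2,T]$ and using that $\Xi(t)=O(1)$ produces
\[
\int_{2}^{\infty}\frac{1}{t\ln t}\!\left(\int_{\Omega(t)}u^{2}(\mathbf{x},t)\,\mathrm{d}\mathbf{x}\right)\mathrm{d}t\le C_{0}<\infty,
\]
whence \eqref{cero3d} for the $L^{2}$ part follows along a sequence $t_{n}\to\infty$, together with a rate analogous to \eqref{secuencia2D}.

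To recover the gradient, I would then run a second virial computation using a weight of the form $|\nabla u|^{2}\,\chi_{\lambda_{1},\lambda_{2},\lambda_{3}}(\mathbf{x})$, exploiting the 3D Kato smoothing effect for \eqref{ZK:Eq}: multiplying the equation by $u\,\chi$ and by $\partial_{x}u\,\chi$ produces after integration by parts terms controlling $\int (\partial_{x}u)^{2}+(\partial_{y}u)^{2}+(\partial_{z}u)^{2}$ on $\Omega(t)$ in terms of the $L^{2}$ mass in a slightly larger region, the nonlinear contribution $\int u^{3}\partial_{x}\chi\,\mathrm{d}\mathbf{x}$ being handled by Sobolev/Gagliardo--Nirenberg as in the 2D case since $u\in C(\mathbb{R};H^{1}(\mathbb{R}^{3}))$. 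Feeding the $L^{2}$ decay from the previous step into this smoothing identity yields the liminf vanishing of $\int_{\Omega(t)}|\nabla u|^{2}\,\mathrm{d}\mathbf{x}$ along the same sequence. The main obstacle I expect is the balance in step (ii): in 3D there are two mixed-derivative directions to dominate simultaneously with a single anisotropic smoothing term, and it is precisely this competition that dictates the somewhat rigid conditions $r_{1}+1<3r_{2}$, $r_{2}+1<3r_{1}$ and forces $r_{1},r_{2}>1$, exactly the counterpart of the restriction $r>1$ imposed in Theorem \ref{Thmdim2H1}.
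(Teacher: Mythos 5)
Your overall two-step plan (a linear-in-$u$ virial for local $L^2$ decay, then an $L^2$-virial/Kato-smoothing identity with Gagliardo--Nirenberg for the cubic term, fed by the first step) is the same architecture as the paper, and your $H^1$ step and the attribution of $r_1,r_2>1$ to the mixed terms are essentially right. However, there is a genuine error at the heart of the first step. The functional $\Xi(t)$ is \emph{linear} in $u$, so when you substitute $\partial_t u=-\partial_x\Delta u-\tfrac12\partial_x(u^2)$ and integrate by parts, the dispersive part produces terms that are again linear in $u$ (e.g.\ $\frac{1}{\eta\lambda_1^3}\int u\,\psi'''\phi\phi\phi$); it cannot produce the quadratic quantity $\frac{1}{\eta\lambda_1^3}\int u^2\psi'\phi\phi\phi$ you claim as the ``sign-definite smoothing term.'' In the paper the coercive quadratic term comes from the \emph{nonlinearity} $\partial_x(u^2/2)$, with prefactor $\frac{1}{\eta(t)\lambda_1(t)}\sim\frac{1}{t\ln t}$ (see $\Xi_{1,1,2}$ in \eqref{eee1} and its 2D analogue $\Xi_{1,1,3}$ in \eqref{Xi113}); all dispersive contributions are error terms, estimated by Cauchy--Schwarz and shown to lie in $L^1(\{t\gg1\})$ under \eqref{e40}--\eqref{e16}. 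Your proposal inverts these roles: you treat the nonlinear term as ``an $O(\lambda_1^{-1})$ perturbation to be absorbed,'' which discards precisely the term that carries the conclusion.

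Even granting your identification, the argument would not close for two further reasons. First, the weight you propose, $\frac{1}{t\ln t}\,t^{-3b}$, is integrable on $\{t\gg1\}$, so finiteness of $\int \frac{1}{t\ln t}t^{-3b}\bigl(\int_{\Omega(t)}u^2\bigr)\mathrm{d}t$ follows trivially from mass conservation and yields no liminf statement; the whole mechanism requires the coercive term to carry the \emph{non-integrable} weight $\frac{1}{t\ln t}$, which is exactly what the choice $\eta(t)\lambda_1(t)\sim t\ln t$ (i.e.\ $\eta(t)=t^{1-b}\ln^{2}t$, not $\eta\sim\ln t$) achieves. Second, with $\eta(t)\sim\ln t$ the functional is not even bounded: Cauchy--Schwarz gives $|\Xi(t)|\lesssim (\lambda_1^q\lambda_2\lambda_3)^{1/2}/\eta(t)\sim t^{b(q+r_1+r_2)/2}/\ln t\to\infty$, so the integration of $\frac{d\Xi}{dt}$ over $[2,T]$ cannot be closed. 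Finally, the conditions $r_1+1<3r_2$, $r_2+1<3r_1$ do not arise from ``dominating mixed errors by a $\lambda_1^{-3}$ good term''; in the paper they come from requiring the linear-in-$u$ error terms in which two transverse derivatives fall on $\phi_{\delta_2}$ or $\phi_{\delta_3}$ (the analogues of \eqref{e10}--\eqref{e11}) to be time-integrable after Cauchy--Schwarz. Repairing the proposal requires restoring the correct roles of the nonlinear and dispersive terms and the correct compensator $\eta$; as written, the core $L^2$ step does not prove \eqref{cero3d}.
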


\begin{rem}
If only $L^2$ decay is  considered, the conditions $r_1,r_2>1$ in \eqref{brconditions3D} can be extended to $r_1,r_2>\frac12$.
\end{rem}

\begin{rem}
The volume of the region $\Omega(t)$ is not preserved with respect to variations of the parameters $b$ and $r_1$ and $r_2$. The supremum value of the area is obtained in the limit $r_1=r_2= \frac32$ and $b=\frac13$, which is $t^{\frac43}$. 
\end{rem}

The proof of Theorem \ref{Thmdim3H1} is similar to proof for the 2D case, with care needed to take into account the bigger number of error terms appearing in the dynamics, as well as some new technical estimates for cubic terms. However, the key of the argument is contained in the 2D case.

\medskip

\begin{figure}[h!]
\begin{center}
\begin{tikzpicture}[scale=1.8]
\pgfmathsetmacro{\cubex}{2}
\pgfmathsetmacro{\cubey}{1}
\pgfmathsetmacro{\cubez}{1}
\draw[black,dashed,fill=lightgray!30] (0,0,0) -- ++(-\cubex,0,0) -- ++(0,-\cubey,0) -- ++(\cubex,0,0) -- cycle;
\draw[black,dashed,fill=lightgray!30] (0,0,0) -- ++(0,0,-\cubez) -- ++(0,-\cubey,0) -- ++(0,0,\cubez) -- cycle;
\draw[black,dashed,fill=lightgray!30] (0,0,0) -- ++(-\cubex,0,0) -- ++(0,0,-\cubez) -- ++(\cubex,0,0) -- cycle;
\draw[->] (-1,-0.5,0) -- (-1,-0.5,1.5) node[below] {$x$};
\draw[->] (0,-0.5,-0.5) -- (1,-0.5,-0.5) node[below] {$y$};
\draw[->] (-1,0,-0.5) -- (-1,0.5,-0.5) node[above] {$z$};
\node at (-0.7,0.05,-0.5){$t^{br_2}$};
\node at (0.35,-0.4,0.3){$t^{br_1}$};
\node at (-0.7,-0.4,0.5){$t^b$};
\node at (-1.5,-0.4,0){$\Omega(t)$};
\end{tikzpicture}
\end{center}
\caption{Schematic figure depicting the set $\Omega(t)\subseteq \mathbb R^3$ for Theorem \ref{Thmdim3H1}, in the centered case, as defined in \eqref{brconditions3D}.}\label{fig:2}
\end{figure}
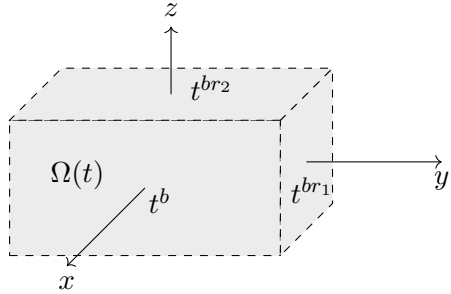

\medskip

Is it possible to obtain strong decay with data only in the energy space? This question is not easy at all, essentially because the dynamics in the large data case may be extremely complex. However, one can show strong $L^2$ decay for $H^1$ data in some particular regions of the space, characterized for being too far from the previously considered regions, and the soliton region. Recall that ${\bf x} =(x,y)=(x_1,x_2)$ in the 2D case, and ${\bf x}=(x,y,z) =(x_1,x_2,x_3)$ in the 3D one. For any $p\geq 1$, $\epsilon>0$ and $t\geq 2$, consider the region (see Fig. \ref{fig:3})
\begin{equation}\label{brconditionsFAR}
	\Omega_j(t):= \left\{{\bf x } \in\mathbb{R}^{d}\, ~ \Big|\, ~ | x_j |\sim  t^{p} \ln^{1+\epsilon} t\right\}, \quad j=1,\ldots, d.
\end{equation}
Recall that $p\geq 1$ is arbitrary. Here $a\sim b$ means that there exist $C_0,c_0>0$ independent of $a$ and $b$ such that $c_0 b \leq a\leq C_0b$. Our last result is the following strong decay property.
\begin{thm}\label{Thmdim23L2}
	Suppose  $u_0\in H^1(\mathbb R^d)$, $d=2,3$ and let $u=u({\bf x},t)$ be the solution to \eqref{ZK:Eq} such that $u\in C\left(\mathbb{R}: H^1(\mathbb{R}^{d})\right)$.
	Then
	\begin{equation}\label{Strong}
	\lim_{t\rightarrow \infty}\int_{\Omega_{j}(t)}  u^2 ({\bf x},t)\mathrm{d}{\bf x} =0, \quad j=1,\ldots, d.
	\end{equation}
\end{thm}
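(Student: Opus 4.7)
\medskip

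\noindent\textbf{Proof plan.} The strategy is to construct a virial functional that dominates $\int_{\Omega_j(t)}u^2\,\mathrm{d}{\bf x}$, is almost-monotone and therefore admits a limit as $t\to\infty$, and then to show this limit is zero via a dyadic-scale iteration. The two main ingredients are the integrability of $1/\lambda(t)=1/(t^p\ln^{1+\epsilon}t)$ under the hypothesis $p\geq 1$, $\epsilon>0$, and the standard Kato-type dispersive/transport structure of \eqref{ZK:Eq}.

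By the symmetries of the problem under sign changes of each coordinate (together with the corresponding reversal of the direction of propagation for $x_1$), it suffices to treat $j=1$ and $x_1>0$. The transverse cases $j\geq 2$ are in fact simpler, because the cubic flux $\partial_{x_1}(u^3)$ integrates to zero against a weight depending only on $x_k$, $k\geq 2$.

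Let $\chi\in C^\infty(\mathbb{R};[0,1])$ be non-decreasing, supported in $[1/2,\infty)$ and equal to $1$ on $[1,\infty)$, and set
$$V(t):=\int_{\mathbb{R}^d} u^2({\bf x},t)\,\chi\!\left(\frac{x_1}{\lambda(t)}\right)\mathrm{d}{\bf x},\qquad \lambda(t):=c_0\,t^p\ln^{1+\epsilon}t,$$
with $c_0>0$ chosen so that $\Omega_1(t)\cap\{x_1>0\}\subset\{x_1\geq \lambda(t)/2\}$. A direct computation using \eqref{ZK:Eq} and integration by parts yields
\begin{align*}
V'(t) &= -\frac{3}{\lambda}\int u_{x_1}^2\chi' - \frac{1}{\lambda}\sum_{k\geq 2}\int u_{x_k}^2\chi' + \frac{1}{\lambda^3}\int u^2\chi''' \\
&\quad + \frac{2}{3\lambda}\int u^3\chi' - \frac{\lambda'}{\lambda^2}\int u^2\,x_1\,\chi'(x_1/\lambda),
\end{align*}
with all derivatives of $\chi$ evaluated at $x_1/\lambda(t)$. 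The first, second, and fifth summands are non-positive (the last because $x_1>0$ and $\chi'\geq 0$ on its support). The two error terms are bounded by $C/\lambda(t)$: the $\chi'''$-term trivially by mass conservation, and the cubic term by Gagliardo--Nirenberg $\|u\|_{L^3}^3\lesssim\|u\|_{L^2}^2\|u\|_{H^1}$ in $d=2$, or by $\|u\|_{L^3}^3\leq\|u\|_{L^2}^{3/2}\|u\|_{L^6}^{3/2}$ in $d=3$, together with conservation of $\|u\|_{H^1}$. Hence, after using $x_1\geq\lambda/2$ on $\mathrm{supp}\,\chi'$,
$$V'(t)\leq \frac{C}{\lambda(t)}-\frac{\lambda'(t)}{2\lambda(t)}\int u^2\chi'(x_1/\lambda)\,\mathrm{d}{\bf x}.$$

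Since $p\geq 1$ and $\epsilon>0$ make $\int_1^\infty 1/\lambda(t)\,\mathrm{d}t<\infty$, the one-sided Cauchy estimate $V(t)-V(s)\leq\int_s^t C/\lambda\to 0$ as $s\to\infty$, together with $V\geq 0$, gives the existence of $L:=\lim_{t\to\infty}V(t)\in[0,M]$. Integrating the refined inequality and using $V\geq 0$ yields
$$\int_1^\infty \frac{\lambda'(\tau)}{\lambda(\tau)}\int u^2\chi'(x_1/\lambda(\tau))\,\mathrm{d}{\bf x}\,\mathrm{d}\tau<\infty,$$
and since $\lambda'/\lambda\sim p/\tau$ is not integrable, $\liminf_{t\to\infty}\int u^2\chi'(x_1/\lambda(t))\,\mathrm{d}{\bf x}=0$, i.e.\ the mass in the transition annulus $\{\lambda(t_n)/2\leq x_1\leq\lambda(t_n)\}$ vanishes along a suitable sequence $t_n\to\infty$.

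The principal obstacle is to upgrade this to $L=0$: the virial alone produces only a $\liminf$-type conclusion, and ruling out residual mass escaping to infinity requires additional work. My plan is to run the same construction at every dyadic scale $\lambda_k(t):=2^k\lambda(t)$, obtaining functionals $V^{(k)}$ with limits $L^{(k)}\in[0,M]$ that are non-increasing in $k$. The differences $L^{(k)}-L^{(k+1)}$ equal the limit of the mass in the annulus $\{\lambda_k(t)\leq x_1\leq\lambda_{k+1}(t)\}$, which vanishes by the $\liminf$ analysis carried out at scale $k$; hence $L^{(k)}\equiv L$ is constant in $k$. Combining this with the pointwise-in-$t$ tightness $V^{(k)}(t)\leq\int_{x_1\geq 2^{k-1}\lambda(t)}u^2\to 0$ as $k\to\infty$ (by dominated convergence on $u(t)\in L^2$), and a diagonal interchange of limits justified by the uniform $H^1$ bound and the integrated virial identity, forces $L=0$. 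The symmetric contribution from $x_1<0$ and the analogous arguments for $j\geq 2$ then complete the proof.
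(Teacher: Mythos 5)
Your first virial step is essentially the paper's: the weighted mass with a monotone weight at scale $\theta_1(t)=t^p\ln^{1+\epsilon}t$, the favorable sign of the transport term, integrability of the remaining terms because $\theta_1^{-1}\in L^1(\{t\gg1\})$, and the resulting bound $\int^\infty \frac{1}{t}\int u^2\chi'\,\mathrm{d}{\bf x}\,\mathrm{d}t<\infty$ (compare \eqref{6p2}--\eqref{s1}). The genuine gap is your last step. Knowing that $L^{(k)}=L$ for every $k$ and that $V^{(k)}(t)\to0$ as $k\to\infty$ for each \emph{fixed} $t$ does not permit the ``diagonal interchange of limits'': neither the uniform $H^1$ bound nor the integrated virial identity excludes the scenario in which a fixed amount of $L^2$ mass moves to the right faster than \emph{every} scale $2^k\lambda(t)$ (say, located near $x_1\sim e^{t}$), in which case each $L^{(k)}$ equals that escaping mass and your conclusion $L=0$ fails. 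Indeed $L=0$ is the assertion that the mass of the whole region $\{x_1\gtrsim\lambda(t)\}$ vanishes, which is strictly stronger than \eqref{Strong} (the theorem only concerns the annulus $|x_j|\sim t^p\ln^{1+\epsilon}t$) and is not proved here. A secondary imprecision: the claim that $L^{(k)}-L^{(k+1)}$ ``vanishes by the liminf analysis at scale $k$'' is incomplete, because $\chi'(x_1/\lambda_k)$ only sees the transition band $[\lambda_k/2,\lambda_k]$, while the weight difference lives on the full dyadic annulus $[\lambda_k/2,2\lambda_k]$; to dominate it you need finitely many intermediate scales together with a weight whose derivative is bounded below on a fixed interval -- precisely the role of hypothesis \eqref{bound_below} in the paper. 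Also, the reduction ``by symmetry it suffices to treat $x_1>0$'' is not available: $x_1\mapsto-x_1$ is not a symmetry of \eqref{ZK:Eq} for a fixed time direction, so the region $x_1\sim-\theta_1(t)$ must be treated directly (it works, as in the paper, because the transport terms are again signed there and the unfavorable dispersive terms carry integrable $\theta_1^{-1}$ factors).

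The good news is that you do not need $L=0$ at all. Once the identity $L^{(k)}=L^{(k+1)}$ is properly justified (with the covering by finitely many scales and the lower bound on the derivative of the weight), you already get $\lim_{t\to\infty}\bigl(V(t)-V^{(K)}(t)\bigr)=0$ for a fixed finite $K$, and $\int_{\Omega_1(t)\cap\{x_1>0\}}u^2\,\mathrm{d}{\bf x}\le V(t)-V^{(K)}(t)$ after adjusting constants; that is exactly \eqref{Strong} on that side, with no interchange of limits. This is, in substance, the paper's route written differently: there, one performs a second virial computation with a nonnegative, compactly supported bump $\tilde\chi$ localized on the annulus; its time derivative is integrable because the only non-integrable factors ($\theta_1'/\theta_1\sim 1/t$) multiply quantities dominated, via \eqref{bound_below}, by the expression already shown to be time-integrable in \eqref{s1}. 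Hence the localized mass has a limit, and since its liminf vanishes by \eqref{s1} and the non-integrability of $1/t$, the limit is zero. I recommend replacing your final paragraph by either of these two closings.
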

To prove this result we employ a new virial estimate first introduced in \cite{MPS}, which is extended here to the $d$-dimensional case (in particular, the transversal directions $y$ and $z$ work very well in terms of decay properties). The fact that we obtain the strong limit here is due to an improved virial estimate that profits of a key sign condition and not only forced decay estimates; however, it only works for long distances.

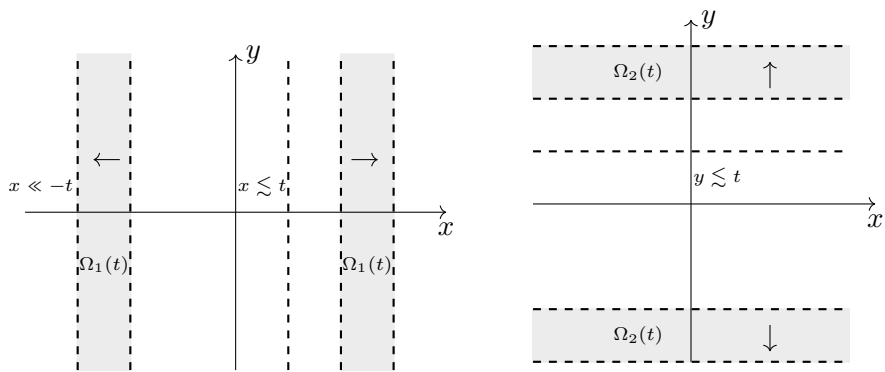
\begin{figure}[h!]
\begin{center}
\begin{tikzpicture}[scale=0.7]
\filldraw[thick, color=lightgray!30] (4,-1) -- (5,-1) -- (5,5) -- (4,5) -- (4,-1);
\filldraw[thick, color=lightgray!30] (0,-1) -- (-1,-1) -- (-1,5) -- (0,5) -- (0,-1);
\draw[thick,dashed] (3,-1) -- (3,5);
\draw[thick,dashed] (5,-1) -- (5,5);
\draw[thick,dashed] (4,-1) -- (4,5);
\draw[thick,dashed] (-1,-1) -- (-1,5);
\draw[thick,dashed] (0,-1) -- (0,5);
\draw[->] (-2,2) -- (6,2) node[below] {$x$};
\draw[->] (2,-1) -- (2,5) node[right] {$y$};
\draw[->] (-0.2,3) -- (-0.7,3) node[below] {};
\draw[->] (4.2,3) -- (4.7,3) node[below] {};
\node at (2.5,2.5){\tiny $x\lesssim t$};
\node at (-1.7,2.5){\tiny $x\ll -t$};
\node at (-0.5,1){\tiny $\Omega_1(t)$};
\node at (4.5,1){\tiny $\Omega_1(t)$};
\end{tikzpicture}
\qquad
\begin{tikzpicture}[scale=0.7]
\filldraw[thick, color=lightgray!30] (-1,4) -- (-1,5) -- (5,5) -- (5,4) -- (-1,4);
\filldraw[thick, color=lightgray!30] (-1,0) -- (-1,-1) -- (5,-1) -- (5,0) -- (-1,0);
\draw[thick,dashed] (-1,3) -- (5,3);
\draw[thick,dashed] (-1,5) -- (5,5);
\draw[thick,dashed] (-1,4) -- (5,4);
\draw[thick,dashed] (-1,-1) -- (5,-1);
\draw[thick,dashed] (-1,0) -- (5,0);
\draw[->] (-1,2) -- (5.5,2) node[below] {$x$};
\draw[->] (2,-1) -- (2,5.5) node[right] {$y$};
\node at (2.5,2.5){\tiny $y\lesssim t$};
\draw[->] (3.5,4.2) -- (3.5,4.7) node[below] {};
\draw[->] (3.5,-0.3) -- (3.5,-0.8) node[below] {};
\node at (1,-0.5){\tiny $\Omega_2(t)$};
\node at (1,4.5){\tiny $\Omega_2(t)$};
\end{tikzpicture}
\end{center}
\caption{(\emph{Left}). Schematic figure depicting the set $\Omega_1(t)$ in Theorem \ref{Thmdim23L2} in the 2D case. (\emph{Right}). The set $\Omega_2(t)$, case 2D.}\label{fig:3}
\end{figure}

\medskip

\subsection{A brief description of the ZK literature}\label{literatura}

In this subsection we briefly describe key previous results for \eqref{ZK:Eq} posed in the $\mathbb R^d$ setting. Originally derived by Zakharov and Kuznetsov \cite{ZK}, the mathematical study of the ZK equation has attracted the attention of many authors in past years. Unlike KdV, ZK is not integrable. It was rigorously derived from the Euler-Poisson system  with magnetic field as a long-wave and small-amplitude limit, see \cite[Section 10.3.2.6]{Lannes:Linares:Saut:2013}.

\medskip

Faminskii \cite{Faminskii:1995} showed local well-posedness (LWP) in $H^s(\mathbb{R}^2)$ for $s\ge 1$. After him, many researchers have contributed to the low regularity LWP theory. We mention the works of Linares and Pastor \cite{Linares:Pastor:2009}, Molinet and Pilod \cite{Molinet:Pilod:2015}, and Gr\"unrock and Herr \cite{Grunrock:Herr:2015}, who showed LWP at regularity $s>\frac12$. Very recently, Kinoshita \cite{Kinoshita:Arxiv:2019} has proved local-wellposedness for $s>-1/4$. This is best possible range. See also \cite{Linares:Pastor:2011} for the the proof of LWP in the case of a 2D modified ZK equation. Uniqueness results vs. spatial decay have been recently proved in \cite{MR3946612}, and propagation of regularity along regions of space has been considered in \cite{MR3842873}.

\medskip

Concerning the 3D case, Linares and Saut \cite{MR2486590}, Molinet and Pilod \cite{Molinet:Pilod:2015}, and Ribaud-Vento \cite{Ribaud:Vento:2012} proved local and global well-posedness (GWP) in $H^s(\mathbb{R}^3)$ for $s>1$. Herr and Kinoshita \cite{Herr:Kinoshita:Arxiv:2020} showed LWP for $s>-\frac{1}{2}$, and GWP in the energy space. Moreover, Herr and Kinoshita have proved that LWP holds in  $H^s(\mathbb{R}^d)$, with $d\ge 3$ and $s>\frac{d-4}{2}$. This last information, and the fact that \eqref{ZK:Eq} is $L^2$-critical in dimension 4 (possibly having blow-up solutions as well), has stopped us to get decay results in 4D.

\medskip

\noindent
{\bf Solitons.} Similar to the one dimensional KdV equation, \eqref{ZK:Eq} possesses soliton solutions of the form
\[
u({\bf x},t)= Q_c(x-ct,x'), \quad c>0, \quad x'\in \mathbb R^{d-1}.
\]
Here $Q_c= c Q(\sqrt{c} {\bf x})$ and $Q$ is the $H^1(\mathbb R^d)$ radial solution of the elliptic PDE
\[
\Delta Q -Q +Q^2=0, \quad Q>0, \qquad d\leq 5.
\]
Unlike KdV, no explicit formula is known for ZK solitons. However, for any $R>0$,
\[
\int_{|x-ct| \leq R, ~|x'|\leq R} Q_c^2 (x-ct,x') d{\bf x} \geq c^{2-\frac{d}2} c_0(R)>0,
\]
revealing that Theorem \ref{Thmdim2L2} cannot hold in the vicinity of solitons. However, Theorem \ref{Thmdim2L2} is still valid in $\Omega(t)$ even if the initial data contains infinitely many solitons adding finite $L^2$ norm. Anne de Bouard \cite{MR1378834} showed that $L^2$ subcritical ZK solitons are orbitally stable in $H^1$, and supercritical ones are unstable. The asymptotic stability of the solution has been proven in \cite{Cote:Munoz:Pilod:Simpson:2016} in the 2D case, and recently in \cite{Farah1} in 3D. Both works are nontrivial extensions of the foundational works by Martel and Merle \cite{MR1826966,MR1753061} concerning the one dimensional KdV case. Well-decoupled multi-solitons were proved stable in 2D, see \cite{Cote:Munoz:Pilod:Simpson:2016}. The modified ZK equation (cubic nonlinearity in \eqref{ZK:Eq}) is $L^2$ critical, and recently finite or infinite time blow up solutions were constructed around the solitary wave \cite{Farah2}, in close relation with a similar result obtained by Merle \cite{MR1824989} for the $L^2$-critical, quintic generalized KdV. Finally, see the recent work \cite{Valet} for the construction of multi-soliton like solutions for 2D and 3D ZK.



\subsection*{Organization of this paper} This paper is organized as follows. Section \ref{Sect:2} contains basic tools needed for the proofs in remaining Sections. Section \ref{Sect:3} and \ref{Sect:4} contain the proofs of Theorems \ref{Thmdim2L2} and \ref{Thmdim2H1}, respectively. Section \ref{Sect:5} is devoted to the proof of Theorem \ref{Thmdim3H1}, and Section \ref{Sect:6} deals with the proof of Theorem \ref{Thmdim23L2}. Finally, Theorem \ref{Thmdim2L2_KDV} is proved in Section \ref{A}.

\subsection*{Acknowledgments} We thank Didier Pilod, Gustavo Ponce and Jean-Claude Saut for comments on a first version of this draft.

\bigskip

\section{Preliminaries}\label{Sect:2}

\medskip

The purpose of this section is to gather all the necessary auxiliary results that will be needed in forthcoming sections. We start by describing the weighted functions used to define our local norms.

\subsection{Weighted functions}\label{Preli1}
Let    $\phi$ be a smooth {\bf even} and  positive function  such that
\medskip
\begin{itemize}
	\item[(i)] $\phi'\leq 0$ \quad on\quad  $\mathbb{R}^{+}=[0,\infty),$
	
	\item[(ii)]  $\phi|_{[0,1]}=1,$\quad  $\phi(x)=e^{-x}$\quad  on\quad  $[2,\infty),\quad e^{-x}\leq \phi(x)\leq 3e^{-x}$\quad  on\quad  $\mathbb{R}^{+}.$
	
	\item[(iii)] The derivatives of $\phi$ satisfy:
\begin{equation*}
|\phi'(x)|\leq c \phi(x)\qquad\mbox{and}\qquad |\phi''(x)|\leq c\phi(x),
\end{equation*}
for some positive constant $c$.
\end{itemize}
\medskip
Let $\psi(x)=\int_{0}^{x} \phi(s)\,\mathrm{d}s.$ Then $\psi$ is an odd function  such that  $\psi(x)=x$ on $[-1,1]$ and $|\psi(x)|\leq 3.$

\medskip

For $\sigma$ a parameter, we set
\begin{equation}\label{psi_phi}
\psi_{\sigma}(x)= \sigma\psi\left(\frac{x}{\sigma}\right)\quad \mbox{so that }\quad \psi_{\sigma}'(x)=\phi\left(\frac{x}{\sigma}\right)=:\phi_{\sigma}(x)
\end{equation}
and
\begin{equation}\label{sigmabound}
\begin{aligned}
\psi_{\sigma}(x)= & x\quad \mbox{on}\quad [-\sigma,\sigma],\\
 |\psi_{\sigma}(x)|\leq 3\sigma,\qquad & e^{-\frac{|x|}{\sigma}}\leq \phi_{\sigma}(x)\leq 3 e^{-\frac{|x|}{\sigma}}\quad \mbox{on}\quad \mathbb{R}.
\end{aligned}
\end{equation}
Also as part of our analysis we require to   define   functions $\lambda_{1},\lambda_{2}$ and $\eta$ that will be described later  in a more detailed manner, so that for the moment we will assume that such functions are smooth enough for $t\gg1$.\footnote{Through all the document we will use the  notation $t\gg1$ that for us  mean   $t\geq 10.$}





\subsection{Compactly supported weights}\label{2.3}

In this paragraph we consider weights needed for the proof of Theorem \ref{Thmdim23L2}, see Section \ref{Sect:6} for more details. We will consider the following  a function $\chi\in C^{\infty}(\mathbb{R})$  such that:
\begin{equation}\label{xi_especial}
0\leq \chi\leq 1,
\end{equation}
\begin{equation*}
\chi(x)=
\begin{cases}
1 & x\leq -1 \\
0 & x\geq  0,
\end{cases}
\end{equation*}
with $\supp(\chi)\subset (-\infty,0]$ and $\chi'(x)\leq 0$ for all $x\in\mathbb{R}.$ Also, for $x\in [-3/4,-1/4],$ the function $\chi'$ satisfies the inequality
\begin{equation}\label{bound_below}
-\chi'(x)\geq c_0 1_{[-\frac34,-\frac14]}(x)\quad\mbox{  for all}\quad  x\in\mathbb{R},
\end{equation}
where $c_0$ is a universal, positive constant.

\bigskip

\section{$L^2$ decay in 2D. Proof of Theorem \ref{Thmdim2L2}}\label{Sect:3}

Recall the region $\Omega(t)$ introduced in \eqref{brconditions}. The purpose of this section is to first show the following auxiliary result.
\begin{lem}\label{lem:d2L2}
Assume that $u_0\in L^2(\mathbb R^2)$. Let $u\in (C\cap L^\infty)(\mathbb R : L^2(\mathbb R^2))$ be the corresponding unique solution of \eqref{ZK:Eq} with initial data $u(t=0)=u_0$. Then, there exists a constant $C_0>0$ such that
\begin{equation}\label{IntegraL2}
\begin{split}
&\int_{\{t\gg1\}} \frac{1}{t\ln t}	
\left(\int_{\Omega(t)} u^{2}(x,y,t) \,\mathrm{d}x\mathrm{d}y\right)\mathrm{d}t \leq C_0 <\infty.
\end{split}
\end{equation}
\end{lem}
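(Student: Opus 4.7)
The plan is to employ the linear‐in‐$u$ virial functional $\Xi(t)$ defined in \eqref{Virial} with parameter choices $\lambda_1(t)=t^b$, $\lambda_2(t)=t^{br}$, $\eta(t)=t^{1-b}\ln t$ (so that $\eta(t)\lambda_1(t)=t\ln t$ matches the prefactor appearing in \eqref{IntegraL2}), and $q>1$ taken close enough to $1$ that the strict inequality $b(q+r+2)<2$ still holds, which is possible thanks to the hypothesis $b<\tfrac{2}{3+r}$. Writing $\Psi(x,y,t)=\psi(x/\lambda_1)\phi(x/\lambda_1^q)\phi(y/\lambda_2)$, Cauchy--Schwarz together with the conservation of $\|u(t)\|_{L^2}$ yields $|\Xi(t)|\lesssim \lambda_1^{q/2}\lambda_2^{1/2}/\eta \lesssim t^{b(q+r+2)/2-1}/\ln t$, which is bounded (indeed vanishes as $t\to\infty$) by our choice of $q$. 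The auxiliary cutoff $\phi(x/\lambda_1^q)$ is precisely what gives $\Xi(t)$ a finite meaning at the rough $L^2$ level, since without it the $\psi$-factor would fail to be square integrable.

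Differentiating $\Xi(t)$, substituting $u_t=-\partial_x\Delta u-\tfrac12\partial_x(u^2)$, and integrating by parts (all boundary terms vanish by localization) yields the identity $\Xi'(t)=\tfrac{1}{2\eta}\int u^2\,\partial_x\Psi+\tfrac{1}{\eta}\int u\,\partial_x\Delta\Psi+\tfrac{1}{\eta}\int u\,\partial_t\Psi-\tfrac{\eta'}{\eta}\,\Xi(t)$. On the target set $\Omega(t)$ all three localizing factors of $\Psi$ are essentially constant (in particular $\psi'(x/\lambda_1)=\phi(x/\lambda_1)=1$ for $|x|<\lambda_1$, and $\phi(x/\lambda_1^q)\sim 1$ since $t^b\ll t^{bq}$); a short calculation then gives the crucial pointwise bound $\partial_x\Psi(x,y,t)\geq c_0/\lambda_1(t)$ on $\Omega(t)$, the potentially indefinite-sign term $\psi(x/\lambda_1)\phi'(x/\lambda_1^q)/\lambda_1^q$ being absorbed thanks to $\lambda_1^q\gg\lambda_1$. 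Hence the nonlinear summand satisfies the lower bound $\tfrac{1}{2\eta}\int u^2\,\partial_x\Psi \geq \tfrac{c_0}{2\eta(t)\lambda_1(t)}\int_{\Omega(t)}u^2 = \tfrac{c_0}{2\,t\ln t}\int_{\Omega(t)}u^2$, which is exactly the density we wish to integrate in \eqref{IntegraL2}.

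The three remaining summands of $\Xi'(t)$ are error terms, all bounded via Cauchy--Schwarz and $L^2$-conservation. Expanding $\partial_x\Delta\Psi=\partial_x^3\Psi+\partial_x\partial_y^2\Psi$ via Leibniz produces several pieces whose $L^2$ norms scale like products of negative powers of $\lambda_1,\lambda_1^q,\lambda_2$ times positive half-powers from their support volumes. A direct case-by-case estimate shows that, after dividing by $\eta=t^{1-b}\ln t$, the pure third-derivative piece $\sim\lambda_1^{-3}\phi''(x/\lambda_1)$ decays like $t^{-1-b(3-r)/2}/\ln t$ (integrable iff $r<3$), while the mixed piece $\sim\lambda_1^{-1}\lambda_2^{-2}\phi''(y/\lambda_2)$ decays like $t^{-1-b(3r-1)/2}/\ln t$ (integrable iff $r>\tfrac13$); the intermediate pieces involving the scale $\lambda_1^q$ are controlled similarly using $q$ slightly larger than $1$. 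The contributions from $\partial_t\Psi$ are of order $\lambda_j'/\lambda_j\sim 1/t$, with dominant scaling $t^{-2+b(3+r)/2}/\ln t$, integrable \emph{precisely} under the sharp assumption $b<\tfrac{2}{3+r}$. Finally, the self-term $|(\eta'/\eta)\Xi|$ is $O(t^{b(q+r+2)/2-2}/\ln t)$, integrable by the strict choice of $q$. Integrating the resulting inequality on $[T_0,T]$, using the uniform bound on $\Xi$, and sending $T\to\infty$ yields \eqref{IntegraL2}.

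The main technical obstacle is the careful bookkeeping of the many error contributions produced when three $x$-derivatives and two $y$-derivatives are distributed across the triple-product weight $\Psi$: each summand must be matched to its dominant localizing scale (typically the narrowest factor present) and shown to be strictly more decaying than $1/(t\ln t)$ after division by $\eta$. The three independent scales $\lambda_1,\lambda_1^q,\lambda_2$ and the compensator $\eta$ must be tuned simultaneously to (i) render $\Xi$ finite at the $L^2$ level, (ii) bring the main positive density down to the target size $1/(t\ln t)$, and (iii) keep \emph{all} error terms integrable in time; the admissible window $\tfrac13<r<3$ and $b<\tfrac{2}{3+r}$ in \eqref{brconditions} is exactly the regime where this balance is achievable.
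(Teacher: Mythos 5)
Your overall strategy is the same as the paper's: the weighted functional \eqref{Virial} with scales $\lambda_1\sim t^b$, $\lambda_2\sim t^{br}$, $\eta$ chosen so that $\eta\lambda_1\sim t\ln t$, boundedness of $\Xi$ by Cauchy--Schwarz and mass conservation, and then a term-by-term verification that everything in $\frac{d}{dt}\Xi$ except the quadratic term is integrable in time under \eqref{brconditions}; your exponent bookkeeping for the dispersive pieces ($r<3$ from $\psi'''$, $r>\frac13$ from the $\partial_y^2$ piece, $b(2+q+r)<2$ from the $\partial_t\Psi$ and $\eta'/\eta$ pieces, with $q>1$ close to $1$) agrees with the paper's. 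The only structural difference is that you estimate the term coming from $\partial_t\big(\psi(x/\lambda_1)\big)$ directly by Cauchy--Schwarz, whereas the paper splits it by Young's inequality with $\epsilon=\lambda_1'(t)$ and absorbs half of the main quadratic term; both work since $b<\frac{2}{3+r}$ is strict.

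There is, however, one step that is wrong as written: the passage from the pointwise bound $\partial_x\Psi\geq c_0/\lambda_1$ \emph{on} $\Omega(t)$ to the global lower bound $\frac{1}{2\eta}\int_{\mathbb{R}^2}u^2\,\partial_x\Psi\geq\frac{c_0}{2\eta\lambda_1}\int_{\Omega(t)}u^2$. This would require $\partial_x\Psi\geq 0$ on all of $\mathbb{R}^2$, and that fails: writing $\partial_x\Psi=\frac{1}{\lambda_1}\psi'\big(\tfrac{x}{\lambda_1}\big)\phi\big(\tfrac{x}{\lambda_1^q}\big)\phi\big(\tfrac{y}{\lambda_2}\big)+\frac{1}{\lambda_1^q}\psi\big(\tfrac{x}{\lambda_1}\big)\phi'\big(\tfrac{x}{\lambda_1^q}\big)\phi\big(\tfrac{y}{\lambda_2}\big)$, the second piece is everywhere $\leq 0$ (it has the sign of $x\,\phi'$), and in the region $|x|\sim\lambda_1^q\gg\lambda_1$ where $\phi'(x/\lambda_1^q)\neq 0$ the first piece is exponentially small (of size $e^{-c\lambda_1^{q-1}}/\lambda_1$), so the negative piece of size $1/\lambda_1^q$ actually dominates there and no pointwise ``absorption via $\lambda_1^q\gg\lambda_1$'' is possible; nothing prevents $u$ from concentrating precisely in that region. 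The fix is the one the paper uses: keep only the first (nonnegative) piece as the main term, which restricted to $\Omega(t)$ gives the target density as in \eqref{Xi113}, and move the $\phi'$-piece to the error terms, bounding it in absolute value by $C\|u_0\|_{L^2}^2/\big(\eta(t)\lambda_1^q(t)\big)\lesssim t^{-1-b(q-1)}$ using only $L^\infty$ bounds on the weights and conservation of mass; this is integrable in time exactly because $q>1$ (the paper's estimate \eqref{11}). With that one-line correction your argument closes and coincides with the paper's proof.
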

The proof of \eqref{IntegraL2} is the key element to conclude Theorem \ref{Thmdim2L2}, but its proof is technical; it requires the introduction of the modified virial functional \eqref{Virial}. 

\medskip

Assuming Lemma \ref{lem:d2L2}, we can easily prove Theorem \ref{Thmdim2L2}, following the lines in \cite{MR4021089}.

\subsection{End of proof of Theorem \ref{Thmdim2L2}} Since the function $\frac{1}{t\ln t}\notin L^{1}\left(\left\{t\gg 1\right\}\right)$ we can ensure that there exist a sequence of positive time $(t_{n})_{n}\uparrow \infty$ as $n$ goes to infinity, such that
\begin{equation*}
\lim_{n\uparrow \infty}\int_{\Omega_{\delta_{1},\delta_{2}}(t_{n})} u^{2}(x,y,t_{n}) \,\mathrm{d}x\mathrm{d}y=0.
\end{equation*}
This convergence of this sequence shows that 0 is an accumulation point, the least one because of nonnegativity. This proves the liminf and concludes  the proof of Theorem \ref{Thmdim2L2}.

\medskip
  Now we prove \eqref{secuencia2D}. It requires a simple result, Lemma \ref{lemdecay}, which is stated and proved below. One of the conclusions of Lemma \ref{lemdecay} is that for each $t\gg 1$
\begin{equation}\label{decay}
  \int_{t}^\infty \frac{1}{s\ln s}	
\left(\int_{\Omega(s)} u^{2}(x,y,s) \,\mathrm{d}x\mathrm{d}y\right)\mathrm{d}s\le C_0\int_{t}^{\infty}\frac{ds}{s\ln^{1/b}(s)}.
\end{equation}
For $t=t_0\gg 1$ in \eqref{decay}, there exists $t_1\ge t_0$ such that
$$\frac{1}{t_1\ln t_1}	
\left(\int_{\Omega(t_1)} u^{2}(x,y,t_1) \,\mathrm{d}x\mathrm{d}y\right)\le \frac{1}{t_1\ln^{1/b}(t_1)}.
$$
Otherwise we arrive to a contradiction with \eqref{decay}. Let $t_1^*=t_1+1$, by a similar argument as above we obtain $t_2\ge t_1^*>t_1$ satisfying
$$\frac{1}{t_2\ln t_2}	
\left(\int_{\Omega(t_2)} u^{2}(x,y,t_2) \,\mathrm{d}x\mathrm{d}y\right)\le \frac{1}{t_2\ln^{1/b}(t_2)}.
$$
Recursively, one can ensure the existence of a sequence of positive times $(t_{n})_{n}\uparrow \infty$ as $n$ goes to infinity, satisfying \eqref{secuencia2D}.

\medskip

The rest of this section will be devoted to the proof of Lemma \ref{lem:d2L2}.

\subsection{Setting} Recall the weighted functions $\psi_\sigma$ and $\phi_\sigma$ defined in \eqref{psi_phi}, the parameters $(b,r)$ in \eqref{brconditions}, and $\delta_1,\delta_2>0$.

\medskip

In the next for each  $b,r>0$  satisfying \eqref{brconditions} we denote by   $q\in(1,2)$  a number  such that
\begin{equation}\label{bqconditions_new}
  b\le\frac{2}{2+q+r}<\frac{2}{3+r},\quad \frac{1}{3}<r<3.
\end{equation}
For $u$  a solution of  the Zakharov-Kuznetsov equation \eqref{ZK:Eq}, we set the functional
\begin{equation}\label{XI2D}
\Xi(t):=\frac{1}{\eta(t)}\int_{\mathbb{R}^{2}}u(x,y,t)\psi_{\sigma}\left(\frac{x}{\lambda_{1}(t)}\right)\phi_{\delta_{1}}\left(\frac{x}{\lambda_{1}^q(t)}\right)\phi_{\delta_{2}}\left(\frac{y}{\lambda_{2}(t)}\right)\,\mathrm{d}x\mathrm{d}y,
\end{equation}
where $\lambda_{1},\lambda_{2}$ and $\eta$ are functions depending on $t$. We consider
\begin{equation}\label{defns}
\lambda_{1}(t)=\frac{t^{b}}{\ln t}\quad\mbox{and}\quad \eta(t)=t^{p}\ln ^{2} t,
\end{equation}
where $p$ is a  positive constant satisfying the constraint
\begin{equation}\label{eq4}
p+b=1.
\end{equation}
We also consider
\begin{equation}\label{eq5}
\lambda_{2}(t)=\lambda_{1}^{r}(t) \qquad\mbox{where}\quad r>0.
\end{equation}
Then,
\begin{equation}\label{Ap7}
\frac{\lambda_{1}'(t)}{\lambda_{1}(t)}\sim \frac{\eta'(t)}{\eta(t)}\sim \frac{1}{t}\qquad\mbox{for}\quad t\gg1.
\end{equation}
Also,
\begin{equation}\label{Ap8}
\lambda_{1}'(t)=\frac{1}{t^{1-b}\ln t}\left(\frac{b\ln t-1}{\ln t}\right)\quad \mbox{and}\quad \lambda_{1}(t)\eta(t)=t\ln t.
\end{equation}
%
%
%
%
%

\begin{lem}\label{bounded2d}
For $u\in L^{2}(\mathbb{R}^{2})$, the functional $\Xi$ is  well defined and bounded in time.
\end{lem}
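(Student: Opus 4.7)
\textbf{Proof proposal for Lemma \ref{bounded2d}.} The plan is to reduce the bound on $\Xi(t)$ to a pointwise estimate on the triple weight, followed by Cauchy--Schwarz against the conserved $L^2$ norm, and then check that the resulting time factor is controlled under \eqref{bqconditions_new}.

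First, using $|\psi_\sigma| \leq 3\sigma$ from \eqref{sigmabound}, and the pointwise bound $\phi_\tau(z) \leq 3 e^{-|z|/\tau}$ (again \eqref{sigmabound}), I would estimate
\begin{equation*}
\left| \psi_\sigma\!\left(\tfrac{x}{\lambda_1(t)}\right) \phi_{\delta_1}\!\left(\tfrac{x}{\lambda_1^q(t)}\right) \phi_{\delta_2}\!\left(\tfrac{y}{\lambda_2(t)}\right) \right| \leq 27\sigma\, e^{-|x|/(\delta_1 \lambda_1^q(t))}\, e^{-|y|/(\delta_2 \lambda_2(t))}.
\end{equation*}
Then Cauchy--Schwarz in the $(x,y)$ variables together with mass conservation $\|u(t)\|_{L^2} = \|u_0\|_{L^2}$ yields
\begin{equation*}
|\Xi(t)| \leq \frac{27\sigma\, \|u_0\|_{L^2}}{\eta(t)} \left( \int_{\mathbb{R}} e^{-2|x|/(\delta_1 \lambda_1^q(t))}\,\mathrm{d}x \right)^{1/2} \left( \int_{\mathbb{R}} e^{-2|y|/(\delta_2 \lambda_2(t))}\,\mathrm{d}y \right)^{1/2}.
\end{equation*}
Each one-dimensional Gaussian-type integral evaluates to a constant multiple of the associated scale: the first gives $C\lambda_1^q(t)$ and the second $C\lambda_2(t)$. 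Hence, using $\lambda_2 = \lambda_1^r$ from \eqref{eq5}, we arrive at
\begin{equation*}
|\Xi(t)| \leq \frac{C\, \|u_0\|_{L^2}}{\eta(t)}\, \lambda_1^{(q+r)/2}(t).
\end{equation*}

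Next I would insert the explicit forms $\lambda_1(t) = t^b/\ln t$ and $\eta(t) = t^{1-b}\ln^2 t$ from \eqref{defns}--\eqref{eq4} to obtain
\begin{equation*}
|\Xi(t)| \leq \frac{C\, t^{b(q+r)/2}}{t^{1-b}\, (\ln t)^{2 + (q+r)/2}} = \frac{C\, t^{\,b(2+q+r)/2 - 1}}{(\ln t)^{2 + (q+r)/2}}.
\end{equation*}
The exponent of $t$ is nonpositive precisely by the constraint $b \leq \frac{2}{2+q+r}$ in \eqref{bqconditions_new}; in fact, under this assumption the right-hand side tends to $0$ as $t \to \infty$. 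For $t$ bounded away from $\infty$ the integrand in \eqref{XI2D} is a product of an $L^2$ function and a Schwartz-class weight, hence trivially finite. Combining both regimes gives a uniform bound for $\Xi(t)$ on $[2,\infty)$, which is the assertion.

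No step looks genuinely delicate here: the lemma is a consistency check for the choices of $\lambda_1,\lambda_2,\eta$ and $q$. The only place where care is needed is the bookkeeping of powers of $t$ and $\ln t$, whose role is to ensure that the normalization $1/\eta(t)$ precisely absorbs the growth of the spatial scales $\lambda_1^q$ and $\lambda_2$ used to localize $u$. This is exactly what \eqref{bqconditions_new} encodes.
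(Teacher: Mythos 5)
Your proof is correct and follows essentially the same route as the paper's: Cauchy--Schwarz against the conserved $L^2$ norm, with the $L^2$ norm of the localized weight scaling like $(\lambda_1^q(t)\lambda_2(t))^{1/2}$, and then the constraint $b\le \frac{2}{2+q+r}$ from \eqref{bqconditions_new} makes the resulting power of $t$ nonpositive. The only cosmetic difference is that you bound the weight pointwise by explicit exponentials instead of rescaling $\|\phi_{\delta_1}\|_{L^2_x}\|\phi_{\delta_2}\|_{L^2_y}$, which yields the identical estimate.
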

\begin{proof}
By Cauchy-Schwarz inequality we obtain
\begin{equation}\label{eq1}
\begin{split}
|\Xi(t)|&\leq \frac{1}{\eta(t)}\|u(t)\|_{L^{2}_{x,y}}\left\|\psi_{\sigma}\right\|_{L^{\infty}_{x}}\left\|\phi_{\delta_{1}}\left(\frac{\cdot}{\lambda_{1}^q(t)}\right)\phi_{\delta_{2}}\left(\frac{\cdot}{\lambda_{2}(t)}\right)\right\|_{L^{2}_{x,y}}\\
&=\frac{\left(\lambda_{1}^q(t)\lambda_{2}(t)\right)^{1/2}}{\eta(t)}\|u_{0}\|_{L^{2}_{x,y}}\left\|\psi_{\sigma}\right\|_{L^{\infty}_{x}}\left\|\phi_{\delta_{2}}\right\|_{L^{2}_{y}}\left\|\phi_{\delta_{1}}\right\|_{L^{2}_{x}}\\
&\lesssim_{\delta_{1},\delta_{2},\sigma} \frac{1}{(\ln(t))^{(4+q+r)/2}}\frac{\|u_{0}\|_{L^{2}_{x,y}}}{t^{(2-b(2+q+r))/2}}.
\end{split}
\end{equation}
Since \eqref{bqconditions_new} is satisfied we have
\[
\sup_{t\gg 1} |\Xi(t)| \leq C_0 <\infty,
\]
which finishes the proof.
\end{proof}

\subsection{Dynamics for $\Xi(t)$}  In what follows, we compute and estimate the dynamics of $\Xi(t)$ in the long time regime.	
	

\begin{lem}\label{le:dT}
For any $t\geq 10$, one has the bound
\begin{equation}\label{dT_ppal}
\begin{split}
\frac{1}{\lambda_{1}(t)\eta(t)}\int_{\mathbb{R}^{2}} u^{2} \psi_{\sigma}'\left(\frac{x}{\lambda_{1}(t)}\right)\phi_{\delta_{1}}\left(\frac{x}{\lambda_{1}^q(t)}\right)\phi_{\delta_{2}}\left(\frac{y}{\lambda_{1}^r(t)}\right)\,\mathrm{d}x\mathrm{d}y &\le\frac{d\Xi}{dt}(t) + \Xi_{int}(t),
\end{split}
\end{equation}
where $\Xi_{int}(t)$ are terms that belong to $L^1([10,\infty))$.
\end{lem}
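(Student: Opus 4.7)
The plan is to differentiate $\Xi(t)$ directly in time and then replace $\partial_{t}u$ using \eqref{ZK:Eq}. Writing $\Psi(x,y,t):=\psi_{\sigma}(x/\lambda_{1})\phi_{\delta_{1}}(x/\lambda_{1}^{q})\phi_{\delta_{2}}(y/\lambda_{2})$, a direct computation gives
$$\Xi'(t)=-\frac{\eta'(t)}{\eta(t)}\Xi(t)+\frac{1}{\eta(t)}\int_{\mathbb{R}^{2}}(\partial_{t}u)\Psi\,\mathrm{d}x\mathrm{d}y+\frac{1}{\eta(t)}\int_{\mathbb{R}^{2}}u\,\partial_{t}\Psi\,\mathrm{d}x\mathrm{d}y.$$
Substituting $\partial_{t}u=-\partial_{x}\Delta u-\tfrac{1}{2}\partial_{x}(u^{2})$ and integrating by parts in $x$ and $y$ (all boundary terms vanish since $\Psi$ and its derivatives decay exponentially in both variables) yields
$$\int_{\mathbb{R}^{2}}(\partial_{t}u)\Psi\,\mathrm{d}x\mathrm{d}y=\int_{\mathbb{R}^{2}}u\bigl(\partial_{x}^{3}\Psi+\partial_{x}\partial_{y}^{2}\Psi\bigr)\mathrm{d}x\mathrm{d}y+\tfrac{1}{2}\int_{\mathbb{R}^{2}}u^{2}\partial_{x}\Psi\,\mathrm{d}x\mathrm{d}y.$$
Since $\partial_{x}\Psi=\frac{1}{\lambda_{1}}\psi_{\sigma}'(x/\lambda_{1})\phi_{\delta_{1}}\phi_{\delta_{2}}+\frac{1}{\lambda_{1}^{q}}\psi_{\sigma}(x/\lambda_{1})\phi_{\delta_{1}}'(x/\lambda_{1}^{q})\phi_{\delta_{2}}$ and $\psi_{\sigma}'=\phi_{\sigma}\geq 0$, the first piece produces (up to a harmless multiplicative constant) exactly the LHS of \eqref{dT_ppal}. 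Bringing it to the left and everything else to the right reduces the lemma to showing that every remaining contribution is dominated by an $L^{1}([10,\infty))$ function $\Xi_{int}(t)$.

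To bound those remaining terms I would only use mass conservation $\|u(t)\|_{L^{2}}=\|u_{0}\|_{L^{2}}$ and the weighted bounds from Subsection \ref{Preli1}. The secondary quadratic piece $\frac{1}{2\eta\lambda_{1}^{q}}\int u^{2}\,\psi_{\sigma}\phi_{\delta_{1}}'\phi_{\delta_{2}}$ is controlled by $\frac{\|u_{0}\|_{L^{2}}^{2}}{\eta\lambda_{1}^{q}}\|\psi_{\sigma}\phi_{\delta_{1}}'\phi_{\delta_{2}}\|_{L^{\infty}_{x,y}}$ using $|\psi_{\sigma}|\leq 3\sigma$ and $|\phi_{\delta_{1}}'|\lesssim \phi_{\delta_{1}}$. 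The remaining terms are linear in $u$, so Cauchy--Schwarz gives $\big|\frac{1}{\eta}\int u\,g\big|\leq \frac{\|u_{0}\|_{L^{2}}}{\eta}\|g\|_{L^{2}_{x,y}}$ for $g\in\{\partial_{x}^{3}\Psi,\partial_{x}\partial_{y}^{2}\Psi,\partial_{t}\Psi\}$. Each such $g$ is expanded by the Leibniz rule into finitely many pieces whose $L^{2}$ norm is computed through the scaling identity $\|f(\cdot/\lambda)\|_{L^{2}}=\lambda^{1/2}\|f\|_{L^{2}}$ together with the pointwise bounds $|\phi^{(k)}|\lesssim \phi$. Finally $\big|\frac{\eta'}{\eta}\Xi(t)\big|$ is dominated using $\frac{\eta'}{\eta}\sim \frac{1}{t}$ from \eqref{Ap7} combined with the quantitative decay of $|\Xi(t)|$ already established in \eqref{eq1}.

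Plugging the scalings $\lambda_{1}=t^{b}/\ln t$, $\lambda_{2}=\lambda_{1}^{r}$ and $\eta=t^{p}\ln^{2}t$ with $p+b=1$ turns every error into a bound of the form $t^{-\alpha}(\ln t)^{\beta}$. The two dispersive pieces produce exponents $\alpha=1+b(\tfrac{3}{2}-\tfrac{r}{2})$ (from $\partial_{x}^{3}\Psi$, which is $>1$ iff $r<3$) and $\alpha=1+b(\tfrac{3r}{2}-\tfrac{1}{2})$ (from $\partial_{x}\partial_{y}^{2}\Psi$, which is $>1$ iff $r>\tfrac{1}{3}$). The binding contributions come from $\partial_{t}\Psi$: after invoking the concentration estimates $|z\,\psi_{\sigma}'(z)|\lesssim \sigma\,\phi(z/\sigma)$ and $|z\,\phi_{\delta_{1}}'(z)|\lesssim \phi_{\delta_{1}}(z)$, one obtains an exponent $\alpha=2-\tfrac{1}{2}b(2+q+r)$, which is strictly greater than $1$ precisely under condition \eqref{bqconditions_new}. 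The secondary quadratic piece and the $-(\eta'/\eta)\Xi$ term are strictly better and do not constrain the parameter range.

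The main obstacle, as is typical in this kind of virial argument, is the bookkeeping: $\partial_{x}^{3}\Psi$, $\partial_{x}\partial_{y}^{2}\Psi$ and $\partial_{t}\Psi$ each split by Leibniz into several pieces with distinct scalings in $\lambda_{1}$ and $\lambda_{2}$, and one must verify that every individual piece lies in $L^{1}([10,\infty))$ under the full conditions \eqref{brconditions}--\eqref{bqconditions_new}. The choice of the compensation $\eta(t)=t^{p}\ln^{2}t$ with the extra $\ln^{2}t$ factor, and of the intermediate localization scale $\lambda_{1}^{q}$ with $q>1$, are precisely what forces the borderline error pieces---those coming from $\partial_{t}\psi_{\sigma}(x/\lambda_{1})$ and from $\lambda_{1}^{-q}\psi_{\sigma}\phi_{\delta_{1}}'$---to become integrable without having to shrink the admissible range of $b$.
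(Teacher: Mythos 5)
Your proposal is correct and follows essentially the same virial strategy as the paper: differentiate $\Xi$, substitute \eqref{ZK:Eq}, integrate by parts, keep the positive quadratic term generated by $\tfrac{1}{\lambda_1}\psi_\sigma'$, and control every remaining piece by mass conservation, Cauchy--Schwarz and the $\lambda^{1/2}$-scaling of the weights; the binding exponents you isolate ($1+b(\tfrac32-\tfrac r2)$, $1+b(\tfrac{3r}2-\tfrac12)$, $2-\tfrac b2(2+q+r)$, $1+b(q-1)$) coincide with \eqref{111}, \eqref{112}, \eqref{11}, \eqref{132d}, \eqref{eq32d}. The one genuine deviation is the term coming from $\partial_t\psi_\sigma(x/\lambda_1(t))$ (the paper's $\Xi_{1,2}$): the paper treats it with Young's inequality and the choice $\epsilon=\lambda_1'(t)$, absorbing $\tfrac12\Xi_{1,1,3}$ into the main term and leaving an error of size $(\lambda_1')^2\lambda_2/\eta\sim t^{-(3-b(3+r))}$ up to logarithms, whereas you bound it directly by Cauchy--Schwarz using the exponential concentration of $z\psi_\sigma'(z)$, which gives $|\lambda_1'|\lambda_1^{(r-1)/2}/\eta\sim t^{-(2-\frac b2(3+r))}$ up to logarithms; both routes are integrable precisely when $b<\tfrac{2}{3+r}$, which \eqref{bqconditions_new} with $q>1$ supplies, so your simpler treatment is legitimate and dispenses with the absorption (and its harmless factor in front of $\tfrac{d\Xi}{dt}$). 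Two small repairs: the pointwise ``concentration'' bounds $|z\,\psi_\sigma'(z)|\lesssim\sigma\,\phi(z/\sigma)$ and $|z\,\phi_{\delta_1}'(z)|\lesssim\phi_{\delta_1}(z)$ are false at the same scale for exponential weights, but they do hold after dilating the scale, e.g. $|z|\,\phi_\sigma(z)\lesssim_\sigma\phi_{2\sigma}(z)$, which is all your $L^2$-scaling computation uses; and the $-(\eta'/\eta)\Xi$ contribution is not strictly better than the binding $\partial_t\Psi$ pieces --- by \eqref{22d} it carries the same exponent $2-\tfrac b2(2+q+r)$ and the same logarithmic weight --- though it is still in $L^1([10,\infty))$, so the conclusion \eqref{dT_ppal} is unaffected.
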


Assuming this estimate, \eqref{defns}, \eqref{eq4} and Lemma \ref{bounded2d} imply Lemma \ref{lem:d2L2}, after noticing that the set $\Omega(t)$ defined in \eqref{brconditions} is nothing but a set where
\[
\begin{aligned}
& \frac{1}{\lambda_{1}(t)\eta(t)}\int_{\Omega(t)} u^{2}\mathrm{d}x\mathrm{d}y \\
& \qquad \le \frac{1}{\lambda_{1}(t)\eta(t)}\int_{\mathbb{R}^{2}} u^{2} \psi_{\sigma}'\left(\frac{x}{\lambda_{1}(t)}\right)\phi_{\delta_{1}}\left(\frac{x}{\lambda_{1}^q(t)}\right)\phi_{\delta_{2}}\left(\frac{y}{\lambda_{1}^r(t)}\right)\,\mathrm{d}x\mathrm{d}y,
\end{aligned}
\]
provided $q>1$ is chosen sufficiently close to 1 in \eqref{bqconditions_new}, and the log terms in \eqref{defns} are discarded after making the parameter $b$ slightly smaller if necessary. The rest of the Section will be devoted to the proof of Lemma \ref{le:dT}.

\medskip

\noindent
{\bf Proof of Lemma \ref{le:dT}}. We have
\begin{equation}\label{separation1}
\begin{split}
\frac{\mathrm{d}}{\mathrm{d}t}\Xi(t)&=\frac{1}{\eta(t)}\int_{\mathbb{R}^{2}}\partial_{t}\left(u\psi_{\sigma}\left(\frac{x}{\lambda_{1}(t)}\right)\phi_{\delta_{1}}\left(\frac{x}{\lambda_{1}^q(t)}\right)\phi_{\delta_{2}}\left(\frac{y}{\lambda_{2}(t)}\right)\right)\,\mathrm{d}x\mathrm{d}y\\
&\quad -\frac{\eta'(t)}{\eta^{2}(t)}\int_{\mathbb{R}^{2}}u\psi_{\sigma}\left(\frac{x}{\lambda_{1}(t)}\right)\phi_{\delta_{1}}\left(\frac{x}{\lambda_{1}^q(t)}\right)\phi_{\delta_{2}}\left(\frac{y}{\lambda_{2}(t)}\right)\,\mathrm{d}x\mathrm{d}y\\
&=: \Xi_{1}(t)+ \Xi_{2}(t).
\end{split}
\end{equation}
First, we bound  $\Xi_{2},$ that in virtue of \eqref{eq1} the same analysis applied there  yields
\begin{equation}\label{22d}
\begin{split}
|\Xi_{2}(t)|&\leq\left|\frac{\eta'(t)}{\eta^{2}(t)}\int_{\mathbb{R}^{2}}u\psi_{\sigma}\left(\frac{x}{\lambda_{1}(t)}\right)\phi_{\delta_{1}}\left(\frac{x}{\lambda^q_{1}(t)}\right)\phi_{\delta_{2}}\left(\frac{y}{\lambda_{2}(t)}\right)\,\mathrm{d}x\mathrm{d}y\right|\\
&\lesssim_{\sigma,\delta_{1},\delta_{2}}\|u_{0}\|_{L^{2}}\frac{\left(\lambda^q_{1}(t)\lambda_{2}(t)\right)^{1/2}\eta'(t)}{\eta^{2}(t)}\\
&\lesssim \frac{1}{t}\frac{1}{\eta(t)}\frac{1}{(\lambda_1(t))^{-(q+r)/2}}=\frac{1}{t^{2-b(1+(q+r)/2)}\ln^{2+(q+r)/2}(t)}.
\end{split}
\end{equation}
From \eqref{bqconditions_new} we have $b\le\frac{2}{q+r+2}$ then $2-b\left(1+\frac{(q+r)}{2}\right)\ge 1$. Thus,  $\Xi_{2}\in L^1(\{t\gg1\})$.

\medskip

Unlike $\Xi_{2}$, to bound  $\Xi_{1}$ it is required   to take into consideration the dispersive part associated to the ZK equation as well as the non-linear interaction. More precisely, we shall decompose such term    as follows:
\begin{equation}\label{separation_Xi}
\begin{split}
\Xi_{1}(t)&=\frac{1}{\eta(t)}\int_{\mathbb{R}^{2}}\partial_{t}u\psi_{\sigma}\left(\frac{x}{\lambda_{1}(t)}\right)\phi_{\delta_{1}}\left(\frac{x}{\lambda_{1}^q(t)}\right)\phi_{\delta_{2}}\left(\frac{y}{\lambda_{2}(t)}\right)\,\mathrm{d}x\mathrm{d}y\\
&\quad -\frac{\lambda_{1}'(t)}{\lambda_{1}(t)\eta(t)}\int_{\mathbb{R}^{2}}u \psi_{\sigma}'\left(\frac{x}{\lambda_{1}(t)}\right)\left(\frac{x}{\lambda_{1}(t)}\right)\phi_{\delta_{1}}\left(\frac{x}{\lambda_{1}^q(t)}\right)\phi_{\delta_{2}}\left(\frac{y}{\lambda_{2}(t)}\right)\,\mathrm{d}x\mathrm{d}y\\
&\quad -q\frac{\lambda_{1}'(t)}{\lambda_{1}(t)\eta(t)}\int_{\mathbb{R}^{2}}u \psi_{\sigma}\left(\frac{x}{\lambda_{1}(t)}\right)\left(\frac{x}{\lambda_{1}^q(t)}\right)\phi_{\delta_{1}}'\left(\frac{x}{\lambda_{1}^q(t)}\right)\phi_{\delta_{2}}\left(\frac{y}{\lambda_{2}(t)}\right)\,\mathrm{d}x\mathrm{d}y\\
&\quad -\frac{\lambda_{2}'(t)}{\lambda_{2}(t)\eta(t)}\int_{\mathbb{R}^{2}}u \psi_{\sigma}\left(\frac{x}{\lambda_{1}(t)}\right)\phi_{\delta_{1}}\left(\frac{x}{\lambda_{1}^q(t)}\right)\left(\frac{y}{\lambda_{2}(t)}\right)\phi_{\delta_{2}}'\left(\frac{y}{\lambda_{2}(t)}\right)\,\mathrm{d}x\mathrm{d}y\\
&=: \Xi_{1,1}(t)+\Xi_{1,2}(t)+\Xi_{1,3}(t)+\Xi_{1,4}(t).
\end{split}
\end{equation}
Concerning to $\Xi_{1,1}$ we have by \eqref{ZK:Eq}  and integration by  parts
\begin{equation}\label{separation_Xi1}
\begin{split}
\Xi_{1,1}(t)&=-\frac{1}{\eta(t)}\int_{\mathbb{R}^{2}}\partial_{x}\left(\Delta u+\frac{u^{2}}{2}\right)\psi_{\sigma}\left(\frac{x}{\lambda_{1}(t)}\right)\phi_{\delta_{1}}\left(\frac{x}{\lambda^q_{1}(t)}\right)\phi_{\delta_{2}}\left(\frac{y}{\lambda_{2}(t)}\right)\,\mathrm{d}x\mathrm{d}y\\
&=\frac{1}{\eta(t)\lambda_{1}(t)}\int_{\mathbb{R}^{2}} \Delta u\psi_{\sigma}'\left(\frac{x}{\lambda_{1}(t)}\right)\phi_{\delta_{1}}\left(\frac{x}{\lambda^q_{1}(t)}\right)\phi_{\delta_{2}}\left(\frac{y}{\lambda_{2}(t)}\right)\,\mathrm{d}x\mathrm{d}y\\
&\quad +\frac{1}{\eta(t)\lambda^q_{1}(t)}\int_{\mathbb{R}^{2}} \Delta u\psi_{\sigma}\left(\frac{x}{\lambda_{1}(t)}\right)\phi_{\delta_{1}}'\left(\frac{x}{\lambda^q_{1}(t)}\right)\phi_{\delta_{2}}\left(\frac{y}{\lambda_{2}(t)}\right)\,\mathrm{d}x\mathrm{d}y\\
&\quad +\frac{1}{2\eta(t)\lambda_{1}(t)}\int_{\mathbb{R}^{2}}  u^{2}\psi_{\sigma}'\left(\frac{x}{\lambda_{1}(t)}\right)\phi_{\delta_{1}}\left(\frac{x}{\lambda_{1}^q(t)}\right)\phi_{\delta_{2}}\left(\frac{y}{\lambda_{2}(t)}\right)\,\mathrm{d}x\mathrm{d}y\\
&\quad +\frac{1}{2\eta(t)\lambda^q_{1}(t)}\int_{\mathbb{R}^{2}}  u^{2}\psi_{\sigma}\left(\frac{x}{\lambda_{1}(t)}\right)\phi_{\delta_{1}}'\left(\frac{x}{\lambda^q_{1}(t)}\right)\phi_{\delta_{2}}\left(\frac{y}{\lambda_{2}(t)}\right)\,\mathrm{d}x\mathrm{d}y\\
&=: \Xi_{1,1,1}(t)+\Xi_{1,1,2}(t)+\Xi_{1,1,3}(t)+\Xi_{1,1,4}(t).
\end{split}
\end{equation}
For  $\Xi_{1,1,1}$ we have  after combining  integration by parts
\begin{equation*}
\begin{split}
\Xi_{1,1,1}(t)&=\frac{1}{\eta(t)\lambda_{1}^{3}(t)}\int_{\mathbb{R}^{2}}u\psi'''_{\sigma}
\left(\frac{x}{\lambda_{1}(t)}\right)\phi_{\delta_{1}}\left(\frac{x}{\lambda^q_{1}(t)}\right)\phi_{\delta_{2}}\left(\frac{y}{\lambda_{2}(t)}\right)\,\mathrm{d}x\mathrm{d}y\\
&\quad +\frac{2}{\eta(t)\lambda_{1}^{2+q}(t)}\int_{\mathbb{R}^{2}}u\psi''_{\sigma}\left(\frac{x}{\lambda_{1}(t)}\right)\phi_{\delta_{1}}'\left(\frac{x}{\lambda^q_{1}(t)}\right)\phi_{\delta_{2}}\left(\frac{y}{\lambda_{2}(t)}\right)\mathrm{d}x\mathrm{d}y\\
&\quad +\frac{1}{\eta(t)\lambda_{1}^{1+2q}(t)}\int_{\mathbb{R}^{2}}u\psi'_{\sigma}\left(\frac{x}{\lambda_{1}(t)}\right)\phi_{\delta_{1}}''\left(\frac{x}{\lambda^q_{1}(t)}\right)\phi_{\delta_{2}}\left(\frac{y}{\lambda_{2}(t)}\right)\mathrm{d}x\mathrm{d}y\\
&\quad +\frac{1}{\eta(t)\lambda_{1}(t)\lambda_{2}^{2}(t)}\int_{\mathbb{R}^{2}}u\psi_{\sigma}^\prime\left(\frac{x}{\lambda_{1}(t)}\right)\phi_{\delta_{1}}\left(\frac{x}{\lambda^q_{1}(t)}\right)\phi_{\delta_{2}}''\left(\frac{y}{\lambda_{2}(t)}\right)\mathrm{d}x\mathrm{d}y.
\end{split}
\end{equation*}
First, we bound each term using Cauchy-Schwarz inequality, as follows:
\[
  \begin{aligned}
    |\Xi_{1,1,1}(t)|\le& \frac{(\lambda_1(t)\lambda_2(t))^{1/2}}{\eta(t)\lambda_1^3(t)}\|u_0\|_{L^2_{x,y}}\|\psi^{\prime\prime\prime}\|_{L^2_x}\|\phi_{\delta_1}\|_{L^\infty_{x}}\|\phi_{\delta_2}\|_{L^2_{y}}\\
    &+\frac{(\lambda_1(t)\lambda_2(t))^{1/2}}{\eta(t)\lambda_1^{2+q}(t)}\|u_0\|_{L^2_{x,y}}\|\psi^{\prime\prime}\|_{L^2_x}\|\phi^\prime_{\delta_1}\|_{L^\infty_{x}}\|\phi_{\delta_2}\|_{L^2_{y}}\\
    &+\frac{(\lambda_1(t)\lambda_2(t))^{1/2}}{\eta(t)\lambda_1^{1+2q}(t)}\|u_0\|_{L^2_{x,y}}\|\psi^{\prime}\|_{L^2_x}\|\phi^{\prime\prime}_{\delta_1}\|_{L^\infty_{x}}\|\phi_{\delta_2}\|_{L^2_{y}}\\
    &+\frac{(\lambda_1(t)\lambda_2(t))^{1/2}}{\eta(t)\lambda_1(t)\lambda_2^2(t)}\|u_0\|_{L^2_{x,y}}\|\psi^{\prime}\|_{L^2_x}\|\phi_{\delta_1}\|_{L^\infty_{x}}\|\phi^{\prime\prime}_{\delta_2}\|_{L^2_{y}}.
  \end{aligned}
\]
Consequently,
\begin{equation}\label{111}
  \begin{aligned}
    |\Xi_{1,1,1}(t)|    \lesssim&~ {} \frac{1}{\eta(t)(\lambda_1(t))^{5/2-r/2}}+\frac{1}{\eta(t)(\lambda_1(t))^{3/2+q-r/2}}\\
    &+\frac{1}{\eta(t)(\lambda_1(t))^{1/2+2q-r/2}}+\frac{1}{\eta(t)(\lambda_1(t))^{1/2+3r/2}}\\
    =&\frac{1}{t^{5/2-r/2}\ln^{r/2-1/2}(t)}+\frac{1}{t^{1+b(1/2+q-r/2)}\ln^{1/2-q+r/2}(t)}\\
    &+\frac{1}{t^{1+b(-1/2+2q-r/2)}\ln^{3/2-2q+r/2}(t)}+\frac{1}{t^{1+b(-1/2+3r/2)}\ln^{3/2-q-3r/2}(t)}.
  \end{aligned}
\end{equation}
We claim $\Xi_{1,1,1}\in L^1(\{t\gg1\})$. Indeed, from \eqref{bqconditions_new} we have $4q-1>1+2q>3>r>\frac 13$. Thus
\begin{align*}
  3&>r, &  1+2q&>r,\\
  4q-1&>r,&  r&>\frac13,
\end{align*}
or equivalently
\begin{align*}
  5/2-\frac r2&>1,& 1 / 2+q-\frac r2&>0,\\
  -1 / 2+2 q-\frac r2&>0, &   -1 / 2+ \frac 32r &>0.
\end{align*}
Next, applying integration by parts,
\[
\begin{aligned}
 \Xi_{1,1,2}(t) = &~{} \frac{1}{\eta(t)\lambda_{1}^q(t)}\int_{\mathbb{R}^{2}} \Delta u\psi_{\sigma}\left(\frac{x}{\lambda_{1}(t)}\right)\phi_{\delta_{1}}'\left(\frac{x}{\lambda_{1}^q(t)}\right)\phi_{\delta_{2}}\left(\frac{y}{\lambda_{2}(t)}\right)\,\mathrm{d}x\mathrm{d}y\\
 =&~{} \frac{1}{\eta(t)\lambda_{1}^{2+q}(t)}\int_{\mathbb{R}^{2}}u\psi_{\sigma}''\left(\frac{x}{\lambda_{1}(t)}\right)\phi_{\delta_{1}}'\left(\frac{x}{\lambda_{1}^q(t)}\right)\phi_{\delta_{2}}\left(\frac{y}{\lambda_{2}(t)}\right)\,\mathrm{d}x\mathrm{d}y\\
&~{}   +\frac{2}{\eta(t)\lambda_{1}^{1+2q}(t)}\int_{\mathbb{R}^{2}}u\psi_{\sigma}'\left(\frac{x}{\lambda_{1}(t)}\right)\phi_{\delta_{1}}''\left(\frac{x}{\lambda_{1}^q(t)}\right)\phi_{\delta_{2}}\left(\frac{y}{\lambda_{2}(t)}\right)\,\mathrm{d}x\mathrm{d}y\\
&~{} +\frac{1}{\eta(t)\lambda_{1}^{3q}(t)}\int_{\mathbb{R}^{2}}u\psi_{\sigma}\left(\frac{x}{\lambda_{1}(t)}\right)\phi_{\delta_{1}}'''\left(\frac{x}{\lambda_{1}^q(t)}\right)\phi_{\delta_{2}}\left(\frac{y}{\lambda_{2}(t)}\right)\,\mathrm{d}x\mathrm{d}y\\
&~{}  +\frac{1}{\eta(t)\lambda_{1}^q(t)\lambda_{2}^{2}(t)}\int_{\mathbb{R}^{2}}u \psi_{\sigma}\left(\frac{x}{\lambda_{1}(t)}\right)\phi_{\delta_{1}}'\left(\frac{x}{\lambda_{1}^q(t)}\right)\phi_{\delta_{2}}''\left(\frac{y}{\lambda_{2}(t)}\right)\mathrm{d}x\mathrm{d}y,
\end{aligned}
\]
and the Cauchy-Schwarz inequality yields
\[
\begin{aligned}
 &   |\Xi_{1,1,2}(t)| \\
 & \quad \leq \frac{(\lambda_1(t)\lambda_2(t))^{1/2}}{\eta(t)\lambda_1^{2+q}(t)}\|u_0\|_{L^2_{x,y}}\|\psi^{\prime\prime\prime}\|_{L^2_x}\|\phi_{\delta_1}\|_{L_x^\infty}\|\phi_{\delta_2}\|_{L^2_{y}} +\frac{(\lambda_1(t)\lambda_2(t))^{1/2}}{\eta(t)\lambda_1^{1+2q}(t)}\|u_0\|_{L^2_{x,y}}\|\psi^{\prime\prime}\|_{L^2_x}\|\phi^\prime_{\delta_1}\|_{L_x^\infty}\|\phi_{\delta_2}\|_{L^2_{y}}\\
    &\qquad +\frac{(\lambda_1^q(t)\lambda_2(t))^{1/2}}{\eta(t)\lambda_1^{3q}(t)}\|u_0\|_{L^2_{x,y}}\|\psi\|_{L_x^\infty}\|\phi^{\prime\prime\prime}_{\delta_1}\|_{L^2_{x}}\|\phi_{\delta_2}\|_{L^2_{y}} +\frac{(\lambda_1^q(t)\lambda_2(t))^{1/2}}{\eta(t)\lambda_1^q(t)\lambda_2^2(t)}\|u_0\|_{L^2_{x,y}}\|\psi\|_{L_x^\infty}\|\phi_{\delta_1}^\prime\|_{L^2_{x}}\|\phi^{\prime\prime}_{\delta_2}\|_{L^2_{y}}.
\end{aligned}
\]
Hence,
\begin{equation}\label{112}
  \begin{aligned}
    |\Xi_{1,1,2}(t)|     \lesssim& {}~ \frac{1}{\eta(t)(\lambda_1(t))^{3/2+q-r/2}}+\frac{1}{\eta(t)(\lambda_1(t))^{1/2+2q-r/2}}\\
    &+\frac{1}{\eta(t)(\lambda_1(t))^{5q/2-r/2}}+\frac{1}{\eta(t)(\lambda_1(t))^{q/2+3r/2}}\\
        =&\frac{1}{t^{1+b(1/2+q-r/2)}\ln^{1/2-q+r/2}(t)}+\frac{1}{t^{1+b(-1/2+2q-r/2)}\ln^{3/2-2q-r/2}(t)}\\
    &+\frac{1}{t^{1+b(-1+5/2q-r/2)}\ln^{2-5/2q+r/2}(t)}+\frac{1}{t^{1+b(-1+q/2+3r/2)}\ln^{2-q/2-3r/2}(t)}.
  \end{aligned}
\end{equation}
From \eqref{bqconditions_new} we have $5q-2>4q-1>1+2q>3>r>1/3>(2-q)/3$. Thus,
\begin{align*}
  1+2q&>r, &  4q-1&>r,\\
  5q-2&>r,&  r&>\frac13 (2-q).
\end{align*}
or equivalently
\begin{align*}
 \frac12+q-\frac r2&>0,& -1/2+2q-\frac r2&>0,\\
  -1+\frac52q- \frac r2 &>0, &  -1 + \frac q2+\frac32 r&>0.
\end{align*}
 Hence $\Xi_{1,1,2}\in L^1(\{t\gg1\})$.

\medskip

We emphasize that the term $\Xi_{1,1,3}$ in \eqref{separation_Xi1}
\begin{equation}\label{Xi113}
\Xi_{1,1,3}(t)= \frac{1}{2\eta(t)\lambda_{1}(t)}\int_{\mathbb{R}^{2}}  u^{2}\psi_{\sigma}'\left(\frac{x}{\lambda_{1}(t)}\right)\phi_{\delta_{1}}\left(\frac{x}{\lambda_{1}^q(t)}\right)\phi_{\delta_{2}}\left(\frac{y}{\lambda_{2}(t)}\right)\,\mathrm{d}x\mathrm{d}y,
\end{equation}
is the term to be estimated after integrating in time, leading to the left hand side in \eqref{dT_ppal}. Therefore, it will remain unchanged nearly until the end of the proof.

\medskip

The therm $\Xi_{1,1,4}$ in \eqref{separation_Xi1} satisfies de following estimate
\begin{equation}\label{11}
\begin{split}
|\Xi_{1,1,4}(t)|&\leq\left|\frac{1}{2\eta(t)\lambda_{1}^q(t)}\int_{\mathbb{R}^{2}}  u^{2}\psi_{\sigma}\left(\frac{x}{\lambda_{1}(t)}\right)\phi_{\delta_{1}}'\left(\frac{x}{\lambda_{1}^q(t)}\right)\phi_{\delta_{2}}\left(\frac{y}{\lambda_{2}(t)}\right)\,\mathrm{d}x\mathrm{d}y\right|\\
&\le\frac{1}{2\eta(t)\lambda_1^q(t)}\|u_0\|^2_{L^2_{x,y}}\|\psi\|_{L_x^\infty}\|\phi^{\prime}_{\delta_1}\|_{L_x^\infty}\|\phi_{\delta_2}\|_{L_y^\infty}.\\
&\lesssim\frac{1}{t^{1+b(q-1)}\ln^{2-q}(t)}
\end{split}
\end{equation}
Since \eqref{bqconditions_new} are satisfied, we obtain $q>1$ (note that $b>0$ is needed here). Thus $\Xi_{1,1,4}\in L^1(\{t\gg1\})$. This last estimate ends the study of the term $\Xi_{1,1}$ in \eqref{separation_Xi}.

\medskip

Now, we focus our attention in the remaining terms in \eqref{separation_Xi}. First, by means of Young's inequality,  we have for   $\epsilon>0,$
\begin{equation*}
\begin{split}
\left| \Xi_{1,2}(t)  \right| &= \left| \frac{\lambda_{1}'(t)}{\lambda_{1}(t)\eta(t)}\int_{\mathbb{R}^{2}}u \psi_{\sigma}'\left(\frac{x}{\lambda_{1}(t)}\right)\left(\frac{x}{\lambda_{1}(t)}\right)\phi_{\delta_{1}}\left(\frac{x}{\lambda_{1}^q(t)}\right)\phi_{\delta_{2}}\left(\frac{y}{\lambda_{2}(t)}\right)\,\mathrm{d}x\mathrm{d}y \right| \\
&\leq \frac{1}{4\epsilon}\left|\frac{\lambda_{1}'(t)}{\lambda_{1}(t)\eta(t)}\right|\int_{\mathbb{R}^{2}}u^{2}\psi_{\sigma}'\left(\frac{x}{\lambda_{1}(t)}\right)
\phi_{\delta_{1}}\left(\frac{x}{\lambda_{1}^q(t)}\right)\phi_{\delta_{2}}\left(\frac{y}{\lambda_{2}(t)}\right)\,\mathrm{d}x\mathrm{d}y\\
&\quad +\epsilon\left|\frac{\lambda_{1}'(t)}{\lambda_{1}(t)\eta(t)}\right|\int_{\mathbb{R}^{2}}
\psi_{\sigma}'\left(\frac{x}{\lambda_{1}(t)}\right)\left(\frac{x}{\lambda_{1}(t)}\right)^{2}\phi_{\delta_{1}}\left(\frac{x}{\lambda_{1}^q(t)}\right)\phi_{\delta_{2}}\left(\frac{y}{\lambda_{2}(t)}\right)\,\mathrm{d}x\mathrm{d}y\\
&=\frac{1}{4\epsilon}\left|\frac{\lambda_{1}'(t)}{\lambda_{1}(t)\eta(t)}\right|\int_{\mathbb{R}^{2}}u^{2}\psi_{\sigma}'\left(\frac{x}{\lambda_{1}(t)}\right)
\phi_{\delta_{1}}\left(\frac{x}{\lambda_{1}^q(t)}\right)\phi_{\delta_{2}}\left(\frac{y}{\lambda_{2}(t)}\right)\,\mathrm{d}x\mathrm{d}y\\
&\quad+\epsilon\left|\frac{\lambda_{1}'(t)\lambda_{2}(t)}{\eta(t)}\right|\|(\cdot)^2\psi^\prime_{\sigma}\|_{L^1_x}\left\|\phi_{\delta_{1}}(\cdot)\right\|_{L_x^\infty}\|\phi_{\delta_{2}}\|_{L^{1}_{y}},\\
\end{split}
\end{equation*}
so that, taking $\epsilon=\lambda'(t)>0$ for $t\gg1;$ it is clear that
\begin{equation*}
\begin{split}
\left|\Xi_{1,2}(t)\right|&\le \frac{1}{4\lambda_{1}(t)\eta(t)}\int_{\mathbb{R}^{2}}u^{2}\psi_{\sigma}'\left(\frac{x}{\lambda_{1}(t)}\right)
\phi_{\delta_{1}}\left(\frac{x}{\lambda_{1}^q(t)}\right)\phi_{\delta_{2}}\left(\frac{y}{\lambda_{2}(t)}\right)\,\mathrm{d}x\mathrm{d}y\\
&\quad +\frac{\left(\lambda_{1}'(t)\right)^{2}}{\lambda_1^2(t)}\frac{\lambda_{2}(t)}{\eta(t)(\lambda_1(t))^{-2}}\|(\cdot)^2\psi_{\sigma}\|_{L^1_x}\left\|\phi_{\delta_{1}}\right\|_{L_x^\infty}\|\phi_{\delta_{2}}\|_{L^{1}_{y}}\\
&=:\frac{1}{2}\Xi_{1,1,3}(t)+\Xi^*_{1,2}(t).
\end{split}
\end{equation*}
Note that the first term in the r.h.s is the quantity to be estimated (see \eqref{Xi113}), unlike the remaining term $\Xi^*_{1,2}$ which
 satisfies
\begin{equation}\label{12}
0\leq \Xi^*_{1,2}(t)\lesssim\frac{\left(\lambda_{1}'(t)\right)^{2}}{\lambda_1^2(t)}\frac{\lambda_2(t)}{\eta(t)(\lambda_1(t))^{-2-r}}\lesssim \frac{1}{t^2}\frac{1}{\eta(t)(\lambda_1(t))^{-2-r}}=\frac{1}{t^{3-b(3+r)}\ln^{4+r}(t)}.
\end{equation}
The  term $\Xi^*_{1,2}$ belongs in $L^1(\{t \gg 1\})$, since   \eqref{bqconditions_new} implies that $b<\frac{2}{r+3}$ or equivalent $3-b(3+r)>1$. This ends the estimate of $\Xi_{1,2}(t)$.

\medskip

Now we consider the term $\Xi_{1,3}(t)$. 
\begin{equation}\label{132d}
\begin{split}
\left| \Xi_{1,3}(t) \right|   &= q\left|\frac{\lambda_{1}'(t)}{\lambda_{1}(t)\eta(t)}\int_{\mathbb{R}^{2}}u \psi_{\sigma}\left(\frac{x}{\lambda_{1}(t)}\right)\left(\frac{x}{\lambda_{1}^q(t)}\right)\phi_{\delta_{1}}'\left(\frac{x}{\lambda^q_{1}(t)}\right)\phi_{\delta_{2}}\left(\frac{y}{\lambda_{2}(t)}\right)\,\mathrm{d}x\mathrm{d}y \right| \\
&\le q\left|\frac{\lambda_{1}'(t)\lambda_1^{q/2}(t)\lambda_2^{1/2}(t)}{\lambda_{1}(t)\eta(t)}\right|\|u\|_{L^2_{x,y}}\|\phi^{\prime}_{\delta_1}\|_{L^2_{x}}\|\phi_{\delta_2}\|_{L^2_{y}}\\
&\lesssim \frac{1}{t^{2-b(+q/2+r/2)}\ln^{2+q/2+r/2}(t)}.
\end{split}
\end{equation}

 By \eqref{bqconditions_new} we have $b\leq\frac{2}{2+q+r}$, consequently  $2-b(1+q/2+r/2) \ge 1$. Thus, $\Xi_{1,3}\in L^1(\{t\gg1\})$.

 As before,
\begin{equation}\label{eq32d}
\begin{aligned}
  \left|\Xi_{1,4}(t)\right|&\le\left|\frac{\lambda_{2}^{\prime}(t)}{\lambda_{2}(t) \eta(t)}\right| \int_{\mathbb{R}^{2}} |u| \psi_{\sigma}\left(\frac{x}{\lambda_{1}(t)}\right) \phi_{\delta_{1}}\left(\frac{x}{\lambda_{1}^q(t)}\right)\left|\frac{y}{\lambda_2(t)}\right| \left|\phi_{\delta_{2}}^{\prime}\left(\frac{y}{\lambda_{2}(t)}\right)\right| \mathrm{d} x \mathrm{d} y\\
 &\le \left|\frac{\lambda_{1}'(t)\lambda_1^{q/2}(t)\lambda_2^{1/2}(t)}{\lambda_{2}(t)\eta(t)}\right|\|u\|_{L^2_{x,y}}\|\phi_{\delta_1}\|_{L^2_{x}}\|(\cdot)\phi^{\prime}_{\delta_2}\|_{L^2_{y}}\\
&\lesssim \frac{1}{t^{2-b(+q/2+r/2)}\ln^{2+q/2+r/2}(t)}.
\end{aligned}
\end{equation}
Hence we obtain that $\Xi_{1,4}\in L^1(\{t\gg 1\})$.

\medskip

Now combining \eqref{separation1}, \eqref{separation_Xi}, \eqref{separation_Xi1} and \eqref{12} we have
\begin{equation*}
\begin{split}
  0\le\Xi_{1,1,3}(t)&=\frac{d\Xi}{dt}(t)-\Xi_{1,1,1}(t)-\Xi_{1,1,2}(t)-\Xi_{1,1,4}(t)\\
  &\quad-\Xi_{1,2}(t)-\Xi_{1,3}(t)-\Xi_{1,4}(t)-\Xi_{2}(t)\\
  &\le\frac{d\Xi}{dt}(t)-\Xi_{1,1,1}(t)-\Xi_{1,1,2}(t)-\Xi_{1,1,4}(t)\\
  &\quad+\frac{1}{2}\Xi_{1,1,3}(t)+\Xi^*_{1,2}(t)-\Xi_{1,3}(t)-\Xi_{1,4}(t)-\Xi_{2}(t).
\end{split}
\end{equation*}
Thus,
\begin{equation}\label{compar}
\begin{split}
  \Xi_{1,1,3}(t)&\le\frac{d\Xi}{dt}(t)-\Xi_{2}(t)-\Xi_{1,1,1}(t)-\Xi_{1,1,2}(t)-\Xi_{1,1,4}(t)\\
  &\quad+\Xi^*_{1,2}(t) -\Xi_{1,3}(t)-\Xi_{1,4}(t).
\end{split}
\end{equation}
Note that all the terms on the right above, including $\frac{d\Xi}{dt}(t)$, lie in $L^1(\{t\gg 1\})$, by  \eqref{22d}, \eqref{111}, \eqref{112}, \eqref{11}, \eqref{12}, \eqref{132d} and \eqref{eq32d}.  In consequence, we conclude \eqref{dT_ppal}.

\subsection{The non-centered case}\label{Non} Here we explain how Theorem \ref{Thmdim2L2}, can be extended to the non-centered case, as explained in Remark \ref{NON}. The proof is simple and require some minor modifications of the proof in the centered case. First of all, we consider this time the functional (compare with \eqref{XI2D})
\begin{equation}\label{XI2D_new}
\Xi(t):=\frac{1}{\eta(t)}\int_{\mathbb{R}^{2}}u(x,y,t)\psi_{\sigma}\left(\frac{x+\rho_1(t)}{\lambda_{1}(t)}\right)\phi_{\delta_{1}}\left(\frac{x+\rho_1(t)}{\lambda_{1}^q(t)}\right)\phi_{\delta_{2}}\left(\frac{y+\rho_2(t)}{\lambda_{2}(t)}\right)\,\mathrm{d}x\mathrm{d}y,
\end{equation}
 where $\rho_1(t):= \pm t^m, \rho_2(t):=\pm t^n$ are the new time-dependent parameters. It is enough to consider the positive sign in $\rho_i(t)$. Only three essential new estimates appear when computing the time derivative of $\Xi(t)$. These are
 \[
\Xi_{a}(t):= \frac{\rho_1'(t)}{\eta(t)\lambda_1(t)}\int_{\mathbb{R}^{2}}u(x,y,t)\psi_{\sigma}'\left(\frac{x+\rho_1(t)}{\lambda_{1}(t)}\right)\phi_{\delta_{1}}\left(\frac{x+\rho_1(t)}{\lambda_{1}^q(t)}\right)\phi_{\delta_{2}}\left(\frac{y+\rho_2(t)}{\lambda_{2}(t)}\right)\,\mathrm{d}x\mathrm{d}y,
 \]
\[
\Xi_{b}(t):= \frac{\rho_1'(t)}{\eta(t)\lambda_1^q (t)}\int_{\mathbb{R}^{2}}u(x,y,t)\psi_{\sigma}\left(\frac{x+\rho_1(t)}{\lambda_{1}(t)}\right)\phi_{\delta_{1}}'\left(\frac{x+\rho_1(t)}{\lambda_{1}^q(t)}\right)\phi_{\delta_{2}}\left(\frac{y+\rho_2(t)}{\lambda_{2}(t)}\right)\,\mathrm{d}x\mathrm{d}y,
 \]
and
\[
\Xi_{c}(t):= \frac{\rho_2'(t)}{\eta(t)\lambda_2(t)}\int_{\mathbb{R}^{2}}u(x,y,t)\psi_{\sigma}\left(\frac{x+\rho_1(t)}{\lambda_{1}(t)}\right)\phi_{\delta_{1}}\left(\frac{x+\rho_1(t)}{\lambda_{1}^q(t)}\right)\phi_{\delta_{2}}'\left(\frac{y+\rho_2(t)}{\lambda_{2}(t)}\right)\,\mathrm{d}x\mathrm{d}y.
 \]
These three quantities are estimated as follows:
 \[
| \Xi_{a}(t)| \lesssim  \frac{|\rho_1'(t)| \lambda_2^{1/2}(t)}{\eta(t)\lambda_1^{1/2}(t)} \lesssim \frac{1}{t^{2-\frac b2(r+1) -m} \ln^{\frac32 +\frac{r}2} t} \in L^1_t([10,\infty))
 \]
 provided $0\leq m\leq 1-\frac12 b(1+r)$. Also, since $q>1$ and $b>0$,
 \[
| \Xi_{b}(t)| \lesssim  \frac{|\rho_1'(t)| \lambda_2^{1/2}(t)}{\eta(t)\lambda_1^{q/2}(t)} \ll  \frac{|\rho_1'(t)| \lambda_2^{1/2}(t)}{\eta(t)\lambda_1^{1/2}(t)} \in L^1_t([10,\infty)); 
 \]
 and finally,
\[
| \Xi_{c}(t)| \lesssim  \frac{|\rho_2'(t)| \lambda_1^{q/2}(t)}{\eta(t)\lambda_2^{1/2}(t)} \lesssim \frac{1}{t^{2 -b +\frac b2(r-q) -n} \ln^{2 -\frac{(r-q)}2 } t} \in L^1_t([10,\infty)), 
 \]
provided $0\leq n\leq 1-\frac12 b(2+q-r)$. Adding the conditions on $m$ and $n$ in \eqref{brconditions_non}, taking $q\approx 1$, one concludes.

\medskip

\subsection{On the liminf condition and some consequences}  The purpose of this subsection is to further explain the consequences of the zero liminf result sated in \eqref{cero2D}. It turns out that some interesting secuencial  theoretical implications can be obtained from such a simple result.

\begin{lem}\label{lemdecay}Let $b$ described in \eqref{bqconditions_new}. Then  for some $C_0>0$ we have
\begin{equation}\label{decay1}
  \int_{t}^{\infty} \frac{1}{s\ln s}	
\left(\int_{\Omega(s)} u^{2}(x,y,s) \,\mathrm{d}x\mathrm{d}y\right)\mathrm{d}s\le C_0\int_{t}^{\infty}\frac{1}{s\ln^{1/b}(s)}ds, \quad t\gg 1,
\end{equation}
and
\begin{equation}\label{decay2}
  \int_{a}^{b} \frac{1}{s\ln s}	
\left(\int_{\Omega(s)} u^{2}(x,y,s) \,\mathrm{d}x\mathrm{d}y\right)\mathrm{d}s\le \frac{C_0}{\ln^{1/b}(a)}+C_0\int_{a}^{b}\frac{1}{s\ln^{1/b}(s)}ds\quad a,b \gg1.
\end{equation}
\end{lem}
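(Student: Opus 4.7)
The plan is to deduce both inequalities from the time-integrated form of the virial bound \eqref{dT_ppal}. Writing $T$ for the integration upper limit in \eqref{decay2} (to avoid a notational clash with the parameter $b$), I first integrate \eqref{dT_ppal} on $[a,T]$ with $a\gg1$, using that the cutoff weights $\psi_\sigma'(x/\lambda_1)\phi_{\delta_1}(x/\lambda_1^q)\phi_{\delta_2}(y/\lambda_2)$ dominate a constant multiple of $\mathbf{1}_{\Omega(s)}$ on $[a,T]$. This lower bound is justified by taking $q$ sufficiently close to $1$ and shrinking the region parameter $b$ slightly so as to absorb the $\ln$-factor in $\lambda_1(s) = s^b/\ln s$, exactly as noted after Lemma \ref{le:dT}. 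This gives
\begin{equation*}
\int_a^T \frac{1}{s\ln s}\int_{\Omega(s)} u^2 \,\mathrm{d}x\,\mathrm{d}y\,\mathrm{d}s \;\le\; |\Xi(T)| + |\Xi(a)| + \int_a^T |\Xi_{int}(s)|\,\mathrm{d}s.
\end{equation*}

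Next I bound the three contributions on the right. At the extremal choice $b = 2/(2+q+r)$, the Cauchy--Schwarz estimate \eqref{eq1} yields $|\Xi(t)| \lesssim 1/\ln^{(4+q+r)/2}(t) = 1/\ln^{1+1/b}(t)$, so $|\Xi(T)| + |\Xi(a)| \lesssim 1/\ln^{1+1/b}(a) \le C_0/\ln^{1/b}(a)$ for $a$ large, producing the boundary term in \eqref{decay2}. For $\Xi_{int}(s)$, each of the explicit bounds \eqref{22d}, \eqref{111}, \eqref{112}, \eqref{11}, \eqref{12}, \eqref{132d} and \eqref{eq32d} takes the form $s^{-\gamma_1}\ln^{-\gamma_2}(s)$, where either $\gamma_1>1$ (so its integral over $[a,T]$ has polynomial decay in $a$, trivially absorbed into $C_0/\ln^{1/b}(a)$), or, at the extremal $b$, $\gamma_1=1$ with $\gamma_2 = 1+1/b$ (which occurs only for $\Xi_2$, $\Xi_{1,3}$ and $\Xi_{1,4}$), in which case the integral is bounded by $b/\ln^{1/b}(a)$. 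Either way, all pieces fit inside the right-hand side of \eqref{decay2}, proving it.

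To deduce \eqref{decay1}, I let $T\to\infty$ in \eqref{decay2}: by \eqref{eq1}, $|\Xi(T)|\to 0$, and the tail $\int_t^\infty \mathrm{d}s/(s\ln^{1/b}s) \sim 1/\ln^{1/b-1}(t)$ dominates the surviving boundary contribution $1/\ln^{1/b}(t)$ for $t$ large (note $1/b>1$ since $b<3/5$), so the boundary term is absorbed into $C_0 \int_t^\infty \mathrm{d}s/(s\ln^{1/b}s)$.

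The main technical point is essentially bookkeeping: verifying that the log-exponent of each error term is at least $1+1/b$ at the extremal value $b = 2/(2+q+r)$, where the $s$-power is exactly $1$. Away from this borderline, every $\Xi_{int}$ contribution has genuine polynomial decay in $s$, and the required estimate is immediate; in the borderline case the identity $(4+q+r)/2 = 1+1/b$ makes the exponents of $\Xi_2,\Xi_{1,3},\Xi_{1,4}$ and of $\Xi(t)$ itself line up cleanly, which is the only nontrivial check.
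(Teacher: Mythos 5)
Your proposal is correct and follows essentially the same route as the paper: integrate the virial inequality \eqref{dT_ppal} (i.e.\ \eqref{compar}) in time, control the boundary terms by the bound \eqref{eq1} at the extremal relation $b=\frac{2}{2+q+r}$ (the paper instead fixes $q=\frac2b-r-2$), and check that every error term from \eqref{22d}--\eqref{eq32d} is dominated by $\frac{1}{s\ln^{1/b}(s)}$, with the only genuinely borderline terms being $\Xi_2,\Xi_{1,3},\Xi_{1,4}$. The only differences are cosmetic -- you prove \eqref{decay2} on $[a,T]$ first and let $T\to\infty$ for \eqref{decay1}, whereas the paper does the tail estimate first -- and your log-exponent bookkeeping ($\gamma_2=1+1/b$ rather than $1/b$) is in fact slightly more precise than, but consistent with, the paper's.
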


\begin{proof}
We adopt the same notation given in the proof of Lemma \ref{bounded2d}. After comparing the terms in \eqref{compar} we set $q$ in \eqref{bqconditions_new} by $q=\frac 2b-r-2$, for $0<b<\frac{2}{3+r}$.  From  \eqref{22d}, \eqref{111}, \eqref{112}, \eqref{11}, \eqref{12}, \eqref{132d} and \eqref{eq32d} we have $|\Xi_{1,1,1}|,$ $ |\Xi^{\ast}_{1,2}|,$ $ |\Xi_{1,1,4}|,$ $|\Xi_{1,1,2}|\lesssim |\Xi_{1,1,4}|$ and $|\Xi_{2}|,$ $|\Xi_{1,4}|$, $|\Xi_{1,3}|\lesssim \frac{1}{(\cdot)\ln^{2+(q+r)/2}(\cdot)}=\frac{1}{(\cdot)\ln^{1/b}(\cdot)}$. Consequently by \eqref{compar} we have

\begin{equation*}
  \Xi_{1,1,3}(t)\le \frac{d\Xi}{dt}(t)+\frac{C_1}{t\ln^{\frac{1}{b}}(t)},\quad t\gg 1,
\end{equation*}
for some $C_1>0$.
Integrating on time and taking into consideration \eqref{eq1} we get for $t\gg 1$
\begin{equation*}
  \begin{split}
    \int_{t}^{\infty}\Xi_{1,1,3}(s)ds&\le \int_{t}^{\infty}\frac{d\Xi}{ds}(s)ds+C_1\int_{t}^{\infty}\frac{1}{s\ln^{1/b}(s)}ds\\
    &\le -\Xi(t)+C_1\int_{t}^{\infty}\frac{1}{s\ln^{1/b}(s)}ds\\
    &\le \frac{C_2}{\ln^{1/b}(t)}+C_1\int_{t}^{\infty}\frac{1}{s\ln^{1/b}(s)}ds\\
    &\le \frac{C_2}{b}\int_{t}^{\infty}\frac{1}{s\ln^{1/b+1}(s)}ds+C_1\int_{t}^{\infty}\frac{1}{s\ln^{1/b}(s)}ds\le C_0\int_{t}^{\infty}\frac{1}{s\ln^{1/b}(s)}ds,
  \end{split}
\end{equation*}
where $C_0=\frac{C_2}{b}+C_1$ and $C_2=\|u_{0}\|_{L^{2}_{x,y}}\left\|\psi_{\sigma}\right\|_{L^{\infty}_{x}}\left\|\phi_{\delta_{2}}\right\|_{L^{2}_{y}}\left\|\phi_{\delta_{1}}\right\|_{L^{2}_{x}}$. Thus \eqref{decay1} is satisfied. Finally by a similar argument we conclude \eqref{decay2}.
\end{proof}

\begin{lem}\label{lemmedida} Let $\varepsilon>0$  and $E_{\varepsilon}:=\left\{s\in \mathbb{R}: \int_{\Omega(s)} u^{2}(x,y,s) \,\mathrm{d}x\mathrm{d}y>\varepsilon\right\}$ and $b$ described in \eqref{bqconditions_new}. Then  the following conditions are satisfied:
\begin{enumerate}
  \item $E_\varepsilon=\bigsqcup_{n\in \mathbb{N}} (a_n,b_n)$ (disjoint union).
  \item   If $a_n\gg 1$ then we have
    \begin{equation}\label{3p32}
      b_n<  a_n^{\exp\left({\frac{2C_0}{\varepsilon\ln^{1/b-1}(a_n)}}\right)} .
    \end{equation}
\end{enumerate}
\end{lem}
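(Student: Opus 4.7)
The plan is to handle the two assertions separately, both resting on the continuity of the function $s\mapsto F(s):=\int_{\Omega(s)}u^{2}(x,y,s)\,\mathrm{d}x\,\mathrm{d}y$ together with the integrated estimate \eqref{decay2} already proved in Lemma \ref{lemdecay}.

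For assertion (1), the first step is to verify that $F$ is continuous on $\mathbb{R}$. Writing
\[
F(s)-F(s_{0})=\int_{\Omega(s_{0})}\bigl(u^{2}(s)-u^{2}(s_{0})\bigr)\,\mathrm{d}x\,\mathrm{d}y+\int_{\Omega(s)\triangle\Omega(s_{0})}u^{2}(s)\,\mathrm{d}x\,\mathrm{d}y,
\]
the first integral is controlled by $\|u(s)-u(s_{0})\|_{L^{2}}(\|u(s)\|_{L^{2}}+\|u(s_{0})\|_{L^{2}})$ and vanishes as $s\to s_{0}$ since $u\in C(\mathbb{R};L^{2})$; the second one vanishes by dominated convergence, since the Lebesgue measure of the symmetric difference of the two axis-aligned boxes $\Omega(s)$ and $\Omega(s_{0})$ tends to $0$ by continuity of $s\mapsto s^{b}$, $s\mapsto s^{br}$. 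Consequently $E_{\varepsilon}=F^{-1}\bigl((\varepsilon,\infty)\bigr)$ is an open subset of $\mathbb{R}$, and therefore admits a canonical decomposition into a countable disjoint union of maximal open intervals $(a_{n},b_{n})$.

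For assertion (2), I would fix one such component $(a_{n},b_{n})\subset E_{\varepsilon}$ with $a_{n}\gg 1$. Since $F(s)>\varepsilon$ on $(a_{n},b_{n})$, one has
\[
\varepsilon\,\ln\!\Bigl(\tfrac{\ln b_{n}}{\ln a_{n}}\Bigr)=\varepsilon\int_{a_{n}}^{b_{n}}\frac{\mathrm{d}s}{s\ln s}\le \int_{a_{n}}^{b_{n}}\frac{F(s)}{s\ln s}\,\mathrm{d}s,
\]
and applying \eqref{decay2} to the right-hand side yields
\[
\varepsilon\,\ln\!\Bigl(\tfrac{\ln b_{n}}{\ln a_{n}}\Bigr)\le \frac{C_{0}}{\ln^{1/b}(a_{n})}+C_{0}\int_{a_{n}}^{b_{n}}\frac{\mathrm{d}s}{s\ln^{1/b}(s)}.
\]

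The remaining step is elementary: by \eqref{bqconditions_new} one has $b<2/3<1$, hence $1/b-1>0$ and a direct antiderivative calculation gives $\int_{a_{n}}^{b_{n}}\mathrm{d}s/(s\ln^{1/b}s)\le 1/\bigl((1/b-1)\ln^{1/b-1}(a_{n})\bigr)$, while the first term on the right is $O(1/\ln^{1/b}(a_{n}))$, of lower order. For $a_{n}\gg 1$ the two contributions can therefore be absorbed into a single upper bound of the form $2C_{0}/\ln^{1/b-1}(a_{n})$ after relabeling $C_{0}$ if necessary. Exponentiating twice gives exactly \eqref{3p32}. The only subtle step is the continuity of $F$; the rest is pure bookkeeping on top of Lemma \ref{lemdecay}.
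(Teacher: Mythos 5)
Your proof is correct and follows essentially the same route as the paper: openness of $E_{\varepsilon}$ gives the decomposition into maximal open intervals, and then integrating $\varepsilon/(s\ln s)$ over a component $(a_n,b_n)$, applying \eqref{decay2}, evaluating the antiderivative of $1/(s\ln^{1/b}s)$, and exponentiating twice yields \eqref{3p32}. Your explicit continuity argument for $F$ is detail the paper leaves implicit, and your absorption of the two error terms (with a possible relabeling of $C_0$) corresponds to the paper's use of $1/b-1\geq 1$ to reach the constant $2C_0$.
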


\begin{proof}
Item (1) is a consequence of that the set $E_{\varepsilon}$ is open on $\mathbb{R}$. First we note from \eqref{bqconditions_new} that $b<2/5$. Let $F(s): =\int_{\Omega(s)} u^{2}(x,y,s) \,\mathrm{d}x\mathrm{d}y$ and  $b_n>a_n\gg 1$. Since $F(s)>\varepsilon$ for $s\in (a_n,b_n)$ and  \eqref{decay2} is satisfied we obtain
\begin{equation*}
  \int_{a_n}^{b_n}\frac{\varepsilon}{s\ln(s)}ds < \int_{a_n}^{b_n}\frac{F(s)}{s\ln(s)}ds\le \frac{C_0}{\ln^{1/b}(a_n)}+C_0\int_{a_n}^{b_n}\frac{1}{s\ln^{1/b}(s)}ds,
\end{equation*}
then we have
\begin{equation*}
         \varepsilon(\ln(\ln(b_n))-\ln(\ln(a_n))< \frac{C_0}{\ln^{1/b}(a_n)}+\frac{C_0}{1/b-1}\left(\frac{1}{\ln^{1/b-1}(a_n)}-\frac{1}{\ln^{1/b-1}(b_n)}\right).
       \end{equation*}
The last inequality implies
\begin{equation*}\begin{split}
         \varepsilon(\ln(\ln(b_n))&< \varepsilon\ln(\ln(a_n))+ \frac{C_0}{\ln^{1/b}(a_n)}+\frac{C_0}{1/b-1}\frac{1}{\ln^{1/b-1}(a_n)}\\
         &< \varepsilon\ln(\ln(a_n))+ \frac{C_0}{\ln^{1/b-1}(a_n)}+\frac{C_0}{\ln^{1/b-1}(a_n)},
         \end{split}
       \end{equation*}
consequently \eqref{3p32} is satisfied.
\end{proof}

\begin{rem}[An explicit construction of the times $t_n$]
Item (2) of the Lemma \ref{lemmedida} is useful  to specify the times   of a sequence  $(t_{n})_{n}\uparrow \infty$ as $n$ goes to infinity  such that
\begin{equation*}
\lim_{n\uparrow \infty}\int_{\Omega_{\delta_{1},\delta_{2}}(t_{n})} u^{2}(x,y,t_{n}) \,\mathrm{d}x\mathrm{d}y=0.
\end{equation*}
In fact let $F(s): =\int_{\Omega_{\delta_{1},\delta_{2}}(s)} u^{2}(x,y,s) \,\mathrm{d}x\mathrm{d}y$, $t_0\gg 1$ and $\varepsilon_1=F(t_0)/2$. From \eqref{3p32} we infer  $F(t_1)\le \varepsilon_1$, if $t_1=t_0^{\exp\left({\frac{2C_0}{\varepsilon\ln^{1/b-1}(t_0)}}\right)}$. Similar as above  for $\varepsilon_2=F(t_1)/2$ we have $F(t_1)\le \varepsilon_2$ if $t_2=t_1^{\exp\left({\frac{2C_0}{\varepsilon\ln^{1/b-1}(t_1)}}\right)}$. Recursively the sequence of time $(t_n)$ described by \begin{equation*}t_{n+1}=t_n^{\exp\left({\frac{2C_0}{\varepsilon\ln^{1/b-1}(t_n)}}\right)},\end{equation*} is such that
\begin{equation*}
  \int_{\Omega_{\delta_{1},\delta_{2}}(t_{n})} u^{2}(x,y,t_{n}) \,\mathrm{d}x\mathrm{d}y\le F(t_0)/2^n\to 0 ,
\end{equation*}
as $n$ goes to infinity.
\end{rem}
\section{ Asymptotic behavior of solutions in $\dot{H}^{1}(\mathbb{R}^{2})$}\label{Sect:4}

The purpose of this section is to prove Theorem \ref{Thmdim2H1}. We follow similar ideas as in previous section, with two additional ingredients. First, we use Theorem \ref{Thmdim2L2}, more precisely, Lemma \ref{lem:d2L2}. Second, we use some technical estimates for nonlinear terms first introduced by Kenig and Martel \cite{MR2590690} in the case of the Benjamin-Ono equation.

\medskip

Recall the region $\Omega(t)$ introduced in \eqref{brconditions}, with the additional assumption $1<r<3$. Theorem \ref{Thmdim2H1} follows from the following result.

\begin{lem}\label{lem:d2H1}
Assume now that $u_0\in H^1(\mathbb R^2)$. Let $u\in (C\cap L^\infty)(\mathbb R : H^1(\mathbb R^2))$ be the corresponding unique solution of \eqref{ZK:Eq} with initial data $u(t=0)=u_0$. Then, there exists a constant $C_1>0$ such that
\begin{equation}\label{IntegraH1}
\int_{\{t\gg1\}} \frac{1}{t\ln t}	
\left(\int_{\Omega(t)} |\nabla u|^{2}(x,y,t) \,\mathrm{d}x\mathrm{d}y\right)\mathrm{d}t \leq C_1 <\infty.
\end{equation}
\end{lem}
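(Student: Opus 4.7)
The plan is to mimic the virial scheme of Lemma \ref{lem:d2L2}, but replacing the linear-in-$u$ functional $\Xi(t)$ in \eqref{XI2D} by a quadratic one, so that the dispersive part produces a localized $|\nabla u|^{2}$ coercive term (Kato smoothing). Concretely, with the same weights and parameters $\lambda_{1},\lambda_{2},\eta,q$ as in \eqref{defns}--\eqref{eq5} (and $1<r<3$ as in the statement), I would introduce
\[
\mathcal{J}(t) := \frac{1}{\eta(t)}\int_{\mathbb R^{2}} u^{2}(x,y,t)\, \psi_{\sigma}\!\left(\frac{x}{\lambda_{1}(t)}\right)\phi_{\delta_{1}}\!\left(\frac{x}{\lambda_{1}^{q}(t)}\right)\phi_{\delta_{2}}\!\left(\frac{y}{\lambda_{2}(t)}\right)\mathrm d x\,\mathrm d y .
\]
Since $\|u(t)\|_{L^{2}}$ is conserved and $\|\psi_{\sigma}\|_{L^{\infty}}\lesssim\sigma$, one gets $|\mathcal{J}(t)|\lesssim \sigma\|u_{0}\|_{L^{2}}^{2}/\eta(t)\to 0$, so in particular $\mathcal{J}$ is bounded on $\{t\gg1\}$ and has finite total variation up to $L^{1}_{t}$ error terms.

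Next I would compute $\mathrm d\mathcal{J}/\mathrm d t$. Using $u_{t}=-(\Delta u)_{x}-uu_{x}$ and integration by parts, the $u_{xxx}$ and $u_{xyy}$ pieces yield the standard 2D Kato identity
\[
\begin{aligned}
\frac{\mathrm d}{\mathrm d t}\int u^{2}\varphi  ={}& \int u^{2}\varphi_{t} -3\int u_{x}^{2}\varphi_{x} -\int u_{y}^{2}\varphi_{x} -2\int u_{x}u_{y}\varphi_{y}\\
& +\int u^{2}(\varphi_{xxx}+\varphi_{xyy}) +\tfrac{2}{3}\int u^{3}\varphi_{x},
\end{aligned}
\]
applied to $\varphi=\psi_{\sigma}(x/\lambda_{1})\phi_{\delta_{1}}(x/\lambda_{1}^{q})\phi_{\delta_{2}}(y/\lambda_{2})$. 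The leading piece of $\varphi_{x}$ is $\frac{1}{\lambda_{1}}\phi_{\sigma}(x/\lambda_{1})\phi_{\delta_{1}}(x/\lambda_{1}^{q})\phi_{\delta_{2}}(y/\lambda_{2})\geq 0$, which gives the coercive term
\[
\mathcal{C}(t):= \frac{1}{\eta(t)\lambda_{1}(t)}\int_{\mathbb R^{2}} \big(3u_{x}^{2}+u_{y}^{2}\big)\,\phi_{\sigma}\!\left(\tfrac{x}{\lambda_{1}}\right)\phi_{\delta_{1}}\!\left(\tfrac{x}{\lambda_{1}^{q}}\right)\phi_{\delta_{2}}\!\left(\tfrac{y}{\lambda_{2}}\right)\mathrm d x\,\mathrm d y,
\]
which dominates $\frac{1}{t\ln t}\int_{\Omega(t)}|\nabla u|^{2}$ (after absorbing logarithmic factors by shrinking $b$ slightly), exactly as in \eqref{dT_ppal}.

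The remaining terms are treated analogously to the estimates \eqref{22d}, \eqref{111}--\eqref{12}, \eqref{132d}, \eqref{eq32d}, using only $\|u\|_{L^{2}}\leq\|u_{0}\|_{L^{2}}$; the only two qualitatively new objects are:
\begin{enumerate}
\item[(i)] The mixed derivative term $-\frac{2}{\eta}\int u_{x}u_{y}\varphi_{y}$. Since $\varphi_{y}=\frac{1}{\lambda_{2}}\psi_{\sigma}(x/\lambda_{1})\phi_{\delta_{1}}(x/\lambda_{1}^{q})\phi_{\delta_{2}}'(y/\lambda_{2})$ and $\|\psi_{\sigma}\|_{\infty}\lesssim\sigma$, Cauchy--Schwarz together with the global bound $\|\nabla u(t)\|_{L^{2}}\lesssim 1$ (coming from $H^{1}$ conservation and the subcritical Sobolev embedding) gives
\[
\left|\tfrac{2}{\eta}\int u_{x}u_{y}\varphi_{y}\right|\lesssim \frac{1}{\eta(t)\lambda_{2}(t)}\,\|u_{0}\|_{H^{1}}^{2}\lesssim \frac{\ln^{r-2}t}{\,t^{\,p+br}\,},
\]
which lies in $L^{1}_{t}(\{t\gg1\})$ precisely when $p+br>1$, i.e. $b(r-1)>0$, which is the hypothesis $r>1$. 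This explains the extra restriction in Theorem~\ref{Thmdim2H1}.
\item[(ii)] The nonlinear piece $\frac{2}{3\eta}\int u^{3}\varphi_{x}$. This is the main obstacle: the straightforward bound $\int u^{3}\varphi_{x}\lesssim \|u\|_{H^{1}}^{3}\cdot\|\varphi_{x}\|_{L^{\infty}}$ produces $\frac{1}{\eta\lambda_{1}}=\frac{1}{t\ln t}$, which is \textbf{not} integrable in time. To fix this I would follow the Kenig--Martel idea \cite{MR2590690}: writing $\omega(t,x,y):=\phi_{\sigma}(x/\lambda_{1})\phi_{\delta_{1}}(x/\lambda_{1}^{q})\phi_{\delta_{2}}(y/\lambda_{2})$ and setting $v:=u\,\omega^{1/2}$, one has, by 2D Gagliardo--Nirenberg,
\[
\int u^{3}\omega \,\mathrm d x\mathrm d y \lesssim \|v\|_{L^{3}}^{3}\lesssim \|v\|_{L^{2}}\,\|\nabla v\|_{L^{2}}^{2}\lesssim \|u\,\omega^{1/2}\|_{L^{2}}\Big(\|\omega^{1/2}\nabla u\|_{L^{2}}^{2}+\|u\,|\nabla \omega^{1/2}|\|_{L^{2}}^{2}\Big),
\]
and $|\nabla\omega^{1/2}|\lesssim \big(\lambda_{1}^{-1}+\lambda_{2}^{-1}\big)\omega^{1/2}$. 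Therefore
\[
\tfrac{2}{3\eta\lambda_{1}}\int u^{3}\omega \leq \epsilon\, \mathcal{C}(t)+C_{\epsilon}\,\|u_{0}\|_{L^{2}}\cdot\Big(\tfrac{1}{\eta\lambda_{1}}\Big)^{?}\cdot \Big(\tfrac{1}{\eta\lambda_{1}}\int u^{2}\omega\Big),
\]
so that the gradient contribution is absorbed into $\mathcal{C}(t)$, while the residual piece involves exactly the \emph{local $L^{2}$ mass} on $\Omega(t)$, whose time integral is already known to be finite by Lemma~\ref{lem:d2L2}. (The powers of $\lambda_{1},\lambda_{2}$ that appear will need to be checked against \eqref{bqconditions_new}; this is the technical, but now routine, verification.)
\end{enumerate}

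Collecting everything, after choosing $\epsilon$ sufficiently small and integrating the inequality
\[
\mathcal{C}(t)\;\leq\; -\frac{\mathrm d\mathcal{J}}{\mathrm d t}(t)+\epsilon\,\mathcal{C}(t)+\mathcal{R}(t),\qquad \mathcal{R}\in L^{1}_{t}(\{t\gg1\}),
\]
from $t_{0}\gg1$ to $+\infty$ and using that $\mathcal{J}(t)\to 0$, one obtains $\int_{t_{0}}^{\infty}\mathcal{C}(t)\,\mathrm d t<\infty$, which is precisely the bound \eqref{IntegraH1}. The main obstacle, as anticipated, is the cubic nonlinearity (ii); the Kenig--Martel localized Gagliardo--Nirenberg manoeuvre, combined with the previously established $L^{2}$-local integrability of Lemma~\ref{lem:d2L2}, is what makes the argument close in the energy space. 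The conclusion \eqref{cero3d}-type liminf and the rate of decay then follow by the same Lebesgue argument used at the beginning of Section~\ref{Sect:3}.
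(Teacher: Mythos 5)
Your overall scheme is the paper's: a quadratic weighted virial $\mathcal Q(t)=\eta^{-1}\int u^2\psi_{\sigma'}(x/\lambda_1)\phi_{\delta_2}(y/\lambda_2)$, the Kato identity producing the coercive $(\partial_xu)^2,(\partial_yu)^2$ terms, the mixed term $\partial_xu\,\partial_yu\,\phi_{\delta_2}'$ estimated by $\|u\|_{L^\infty_tH^1}^2/(\eta\lambda_2)$, which is exactly where $r>1$ enters (the extra $\phi_{\delta_1}(x/\lambda_1^q)$ factor you keep is harmless but unnecessary here, since $\int u^2\psi_{\sigma'}$ is already finite for $L^2$ data). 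The genuine gap is in your treatment of the cubic term (ii). The inequality you invoke, $\|v\|_{L^3}^3\lesssim\|v\|_{L^2}\|\nabla v\|_{L^2}^2$, is not the two-dimensional Gagliardo--Nirenberg inequality and is in fact scale-inconsistent in $d=2$ (under $v\mapsto v(\cdot/\mu)$ the left side scales like $\mu^2$ and the right side like $\mu$); the correct 2D version is $\|v\|_{L^3}^3\lesssim\|\nabla v\|_{L^2}\,\|v\|_{L^2}^{2}$. This matters beyond bookkeeping: with your exponents the gradient enters quadratically, so the term you try to absorb into $\epsilon\,\mathcal C(t)$ carries a constant of size $C_{GN}\|u\,\omega^{1/2}\|_{L^2}\sim\|u_0\|_{L^2}$, which cannot be made small for the large data the theorem is about — the "$(\,\cdot\,)^{?}$" you leave in the exponent is precisely where this fails to close.

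With the correct exponents no absorption is needed and the argument closes as in the paper: the single factor $\|\nabla(u\,\omega^{1/3})\|_{L^2}$ (or, in the paper's version, $\|\nabla(u\chi_m\chi_n)\|_{L^2}$ on a partition into unit squares, using that the exponential weights vary by a bounded factor on each square) is bounded uniformly by $\|u\|_{L^\infty_tH^1}$, and the remaining quadratic factor is the weighted local mass, so that
\[
\frac{1}{\eta(t)\lambda_1(t)}\int u^3\,\omega\;\lesssim\;\frac{1}{\eta(t)\lambda_1(t)}\int u^2\,\phi_{\sigma'}\!\left(\tfrac{x}{\lambda_1}\right)\phi_{\delta_2}\!\left(\tfrac{y}{\lambda_2}\right)\mathrm dx\,\mathrm dy .
\]
One further point of care: what you then need is time-integrability of this \emph{weighted} local mass, not merely of $\int_{\Omega(t)}u^2$ as in the statement \eqref{IntegraL2}; this is supplied by the inequality \eqref{dT_ppal} from the proof of Lemma \ref{lem:d2L2}, provided $\sigma'$ is chosen so that $\tfrac1\sigma+\tfrac1{\delta_1}\le\tfrac1{\sigma'}$, which makes $\phi_{\sigma'}(x/\lambda_1)\lesssim\psi_\sigma'(x/\lambda_1)\phi_{\delta_1}(x/\lambda_1^q)$. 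Once the cubic term is handled this way, your concluding integration of $\mathcal C(t)\le-\frac{\mathrm d\mathcal Q}{\mathrm dt}+\mathcal R(t)$ with $\mathcal R\in L^1_t$ gives \eqref{IntegraH1} exactly as in the paper.
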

Recall $\psi$ and $\phi$ defined in Subsection \ref{Preli1}. In what follows, $\sigma'$ is a positive constant to be determined later. To prove \eqref{IntegraH1}, we consider the functional
\begin{equation}\label{defQd2}
\mathcal{Q}(t):=\frac{1}{\eta(t)}\int_{\mathbb{R}^{2}} u^{2}(x,y,t)\psi_{\sigma'}\left(\frac{x}{\lambda_{1}(t)}\right)\phi_{\delta_{2}}\left(\frac{y}{\lambda_{2}(t)}\right)\mathrm{d}x\,\mathrm{d}y,
\end{equation}
that is clearly well defined for   solutions of the IVP  \eqref{ZK:Eq}.

\medskip

Next, we compute:
\begin{equation}\label{main2}
\begin{split}
\frac{\mathrm{d}}{\mathrm{d}t}\mathcal{Q}(t)=& \underbrace{\frac{2}{\eta(t)}\int_{\mathbb{R}^{2}}u\partial_{t}u\psi_{\sigma'}\left(\frac{x}{\lambda_{1}(t)}\right)\phi_{\delta_{2}}\left(\frac{y}{\lambda_{2}(t)}\right)\,\mathrm{d}x\,\mathrm{d}y}_{A_{1}(t)}\\
&\underbrace{-\frac{\lambda_{2}'(t)}{\lambda_{2}(t)\eta(t)}\int_{\mathbb{R}^{2}}u^{2}\psi_{\sigma'}\left(\frac{x}{\lambda_{1}(t)}\right)\phi_{\delta_{2}}'\left(\frac{y}{\lambda_{2}(t)}\right)\left(\frac{y}{\lambda_{2}(t)}\right)\,\mathrm{d}x\,\mathrm{d}y}_{A_{2}(t)}\\
&\underbrace{-\frac{\lambda_{1}'(t)}{
		\lambda_{1}(t)\eta(t)}\int_{\mathbb{R}^{2}}u^{2}\phi_{\sigma'}\left(\frac{x}{\lambda_{1}(t)}\right)\left(\frac{x}{\lambda_{1}(t)}\right)\phi_{\delta_{2}}\left(\frac{y}{\lambda_{2}(t)}\right)\,\mathrm{d}x\,\mathrm{d}y}_{A_{3}(t)}\\
&\underbrace{-\frac{\eta'(t)}{\eta^{2}(t)}\int_{\mathbb{R}^{2}}u^{2}\psi_{\sigma'}\left(\frac{x}{\lambda_{1}(t)}\right)\phi_{\delta_{2}}\left(\frac{y}{\lambda_{2}(t)}\right)\,\mathrm{d}x\,\mathrm{d}y}_{A_{4}(t)}.
\end{split}
\end{equation}
Now we bound each of the terms above. In the first place we have, after applying integration by parts,  that
\begin{equation}\label{divisionA11A12}
\begin{split}
A_{1}(t)&=\frac{2}{\eta(t)}\int_{\mathbb{R}^{2}}\partial_{x}u\left(\Delta u+\frac{u^{2}}{2}\right)\psi_{\sigma'}\left(\frac{x}{\lambda_{1}(t)}\right)\phi_{\delta_{2}}\left(\frac{y}{\lambda_{2}(t)}\right)\,\mathrm{d}x\,\mathrm{d}y\\
&\quad +\frac{2}{\lambda_{1}(t)\eta(t)}\int_{\mathbb{R}^{2}}u\left(\Delta u+\frac{u^{2}}{2}\right)\phi_{\sigma'}\left(\frac{x}{\lambda_{1}(t)}\right)\phi_{\delta_{2}}\left(\frac{y}{\lambda_{2}(t)}\right)\,\mathrm{d}x\,\mathrm{d}y\\
&=A_{1,1}(t)+A_{1,2}(t).
\end{split}
\end{equation}
In this sense, we have that
\begin{equation}\label{divisionA11}
\begin{split}
A_{1,1}(t)&=-\frac{1}{\lambda_{1}(t)\eta(t)}\int_{\mathbb{R}^{2}}\left(\partial_{x}u\right)^{2}\phi_{\sigma'}\left(\frac{x}{\lambda_{1}(t)}\right)\phi_{\delta_{2}}\left(\frac{y}{\lambda_{2}(t)}\right)\,\mathrm{d}x\,\mathrm{d}y\\
&\quad +\frac{1}{\lambda_{1}(t)\eta(t)}\int_{\mathbb{R}^{2}}\left(\partial_{y}u\right)^{2}\phi_{\sigma'}\left(\frac{x}{\lambda_{1}(t)}\right)\phi_{\delta_{2}}\left(\frac{y}{\lambda_{2}(t)}\right)\,\mathrm{d}x\,\mathrm{d}y\\
&\quad -\frac{2}{\lambda_{2}(t)\eta(t)}\int_{\mathbb{R}^{2}}\partial_{x}u\partial_{y}u\psi_{\sigma'}\left(\frac{x}{\lambda_{1}(t)}\right)\phi_{\delta_{2}}'\left(\frac{y}{\lambda_{2}(t)}\right)\,\mathrm{d}x\,\mathrm{d}y\\
&\quad -\frac{1}{3\eta(t)\lambda_{1}(t)}\int_{\mathbb{R}^{2}}u^{3}\phi_{\sigma'}\left(\frac{x}{\lambda_{1}(t)}\right)\phi_{\delta_{2}}\left(\frac{y}{\lambda_{2}(t)}\right)\,\mathrm{d}x\,\mathrm{d}y\\
&=A_{1,1,1}(t)+A_{1,1,2}(t)+A_{1,1,3}(t)+A_{1,1,4}(t).
\end{split}
\end{equation}
The terms $A_{1,1,1}$ and $A_{1,1,2}$ are essentially some portion of the quantities appearing in \eqref{IntegraH1}. The others come from $A_{1,2}(t)$. Instead, the terms $A_{1,1,3}$ and $A_{1,1,4}$ need to be estimated.

\medskip

First, we handle $A_{1,1,3}$.  We obtain
\begin{equation*}
\int_{\{t\gg1\}}|A_{1,1,3}(t)|\,\mathrm{d}t\lesssim_{\delta_{2}}\|u\|_{L^{\infty}_{t}H^{1}}^{2}\int_{\{t\gg1\}}\frac{\mathrm{d}t}{\eta(t)\lambda_{2}(t)}<\infty,
\end{equation*}
whenever $r>1$ (see \eqref{defns} and \eqref{eq5}). This is the extra condition needed for the proof of Theorem \ref{Thmdim2H1}.

\medskip

Next, we focus our attention into $A_{1,1,4}(t)$.  Here  we consider a smooth cut-off function $\chi:\mathbb{R}\longrightarrow\mathbb{R}$  such that
\begin{equation}\label{chi_chi}
\hbox{$\chi\equiv 1$ on $[0,1]$,\quad $0\leq \chi \leq 1$ ~ and ~ $\chi\equiv0 $ on $(-\infty,-1]\cup[2,\infty).$}
\end{equation}
For $n\in\mathbb{Z},$ we set $\chi_{n}(x):=\chi(x-n)$, such that $\chi_n \equiv 1$ in $[n,n+1]$.
Similarly, we define for $m\in\mathbb{Z}$ the function  $\chi_{m}(y):=\chi(y-m).$

\medskip

First, notice that  by the Gagliardo-Nirenberg-Sobolev inequality
\[
\begin{split}
&\int_{\mathbb{R}^{2}}|u|^{3}\phi_{\sigma'}\left(\frac{x}{\lambda_{1}(t)}\right)\phi_{\delta_{2}}\left(\frac{y}{\lambda_{2}(t)}\right)\,\mathrm{d}x\,\mathrm{d}y\\
&= \sum_{m\in\mathbb{Z}}\sum_{n\in\mathbb{Z}}\int_{m}^{m+1}\int_{n}^{n+1}|u|^{3}\phi_{\sigma'}\left(\frac{x}{\lambda_{1}(t)}\right)\phi_{\delta_{2}}\left(\frac{y}{\lambda_{2}(t)}\right)\,\mathrm{d}x\,\mathrm{d}y\\
&\leq\sum_{m\in\mathbb{Z}}\sum_{n\in\mathbb{Z}}\int_{\mathbb{R}}\int_{\mathbb{R}}\left(|u|\chi_{m}\chi_{n}\right)^{3}\phi_{\sigma'}\left(\frac{x}{\lambda_{1}(t)}\right)\phi_{\delta_{2}}\left(\frac{y}{\lambda_{2}(t)}\right)\,\mathrm{d}x\,\mathrm{d}y\\
&\leq\sum_{m\in\mathbb{Z}}\sum_{n\in\mathbb{Z}}\|u\chi_{m}\chi_{n}\|_{L^{3}_{xy}}^{3}\left(\max_{x\in[n,n+1]}\phi_{\sigma'}\left(\frac{x}{\lambda_{1}(t)}\right)\right)\left(\max_{y\in[m,m+1]}\phi_{\delta_{2}}\left(\frac{y}{\lambda_{2}(t)}\right)\right)\\
& \lesssim \sum_{m\in\mathbb{Z}}\sum_{n\in\mathbb{Z}}\|\nabla\left(u\chi_{m}\chi_{n}\right)\|_{L^{2}_{xy}}\|u\chi_{m}\chi_{n}\|_{L^{2}_{xy}}^{2}\left(\max_{x\in[n,n+1]}\phi_{\sigma'}\left(\frac{x}{\lambda_{1}(t)}\right)\right)\left(\max_{y\in[m,m+1]}\phi_{\delta_{2}}\left(\frac{y}{\lambda_{2}(t)}\right)\right).
\end{split}
\]
Nevertheless, by hypothesis
\begin{equation*}
\begin{split}
\left\|\nabla\left(u\chi_{n}\chi_{m}\right)\right\|_{L^{2}_{xy}}&=\left\|\chi_{n}\chi_{m}\nabla u+u\nabla(\chi_{n}\chi_{m})\right\|_{L^{2}_{xy}}\\
&\lesssim \|u(t)\|_{H^{1}(\mathbb{R}^{2})} \leq c\|u\|_{L_{t}^{\infty}H^{1}}<\infty.
\end{split}
\end{equation*}
Therefore, we obtain the simpler bound
\begin{equation}\label{cubic}
\begin{split}
&\int_{\mathbb{R}^{2}}|u|^{3}\phi_{\sigma'}\left(\frac{x}{\lambda_{1}(t)}\right)\phi_{\delta_{2}}\left(\frac{y}{\lambda_{2}(t)}\right)\,\mathrm{d}x\,\mathrm{d}y\\
& \lesssim \sum_{m\in\mathbb{Z}}\sum_{n\in\mathbb{Z}}  \|u\chi_{m}\chi_{n}\|_{L^{2}_{xy}}^{2}\left(\max_{x\in[n,n+1]}\phi_{\sigma'}\left(\frac{x}{\lambda_{1}(t)}\right)\right)\left(\max_{y\in[m,m+1]}\phi_{\delta_{2}}\left(\frac{y}{\lambda_{2}(t)}\right)\right).
\end{split}
\end{equation}
Also, from \eqref{sigmabound},
\begin{equation*}
e^{-\frac{|x|}{\sigma'\lambda_{1}(t)}}\leq \phi_{\sigma'}\left(\frac{x}{\lambda_{1}(t)}\right)\leq 3e^{-\frac{|x|}{\sigma'\lambda_{1}(t)}}.
\end{equation*}
Then, we consider the  cases described below.
\begin{itemize}
	\item Case $x>0:$
	In this case we have that
	\begin{equation}\label{e1.1}
	\begin{split}
	\max_{x\in[n,n+1]}\phi_{\sigma'}\left(\frac{x}{\lambda_{1}(t)}\right) \leq 	3\max_{x\in[n,n+1]}e^{-\frac{|x|}{\sigma'\lambda_{1}(t)}}=3e^{-\frac{n}{\sigma'\lambda_{1}(t)}}.
	\end{split}
	\end{equation}
	Instead, the minimum value at the same interval is given by
	\begin{equation}\label{e1.2}
	\begin{split}
	\min_{x\in[n,n+1]}\phi_{\sigma'}\left(\frac{x}{\lambda_{1}(t)}\right)\geq 	\min_{x\in[n,n+1]}e^{-\frac{|x|}{\sigma'\lambda_{1}(t)}}=e^{-\frac{1}{\sigma'\lambda_{1}(t)}}e^{-\frac{n}{\sigma'\lambda_{1}(t)}},
	\end{split}
	\end{equation}
	so that, after combining both inequalities above we get
	\begin{equation}\label{e1.3}
	\max_{x\in[n,n+1]}\phi_{\sigma'}\left(\frac{x}{\lambda_{1}(t)}\right)\leq 3e^{\frac{1}{\sigma'\lambda_{1}(t)}}\min_{x\in[n,n+1]}\phi_{\sigma'}\left(\frac{x}{\lambda_{1}(t)}\right),\quad\mbox{whenever}\quad x>0.
	\end{equation}
	\medskip
	\item Case $x\leq 0:$ By parity, we obtain a similar bound as before.
\end{itemize}
Finally, combining both cases and using that $t\gg 1$, we get that for a universal constant $C$,
	\begin{equation}\label{e1.7}
	\max_{x\in[n,n+1]}\phi_{\sigma'}\left(\frac{x}{\lambda_{1}(t)}\right)\leq  C
	 \min_{x\in[n,n+1]}\phi_{\sigma'}\left(\frac{x}{\lambda_{1}(t)}\right),\quad\mbox{for all } \quad x\in\mathbb{R}.
	\end{equation}
A similar analysis  yields
\[
\max_{y\in[m,m+1]}\phi_{\delta_{2}}\left(\frac{y}{\lambda_{2}(t)}\right)\leq C 
\min_{y\in[m,m+1]}\phi_{\delta_{2}}\left(\frac{y}{\lambda_{2}(t)}\right),\quad\mbox{for all } \quad y\in\mathbb{R}.
\]
Next, we incorporate the two last estimates above into the  original one  \eqref{cubic}. Combined with the Monotone Converge Theorem, we get
\begin{equation*}
\begin{split}
&\int_{\mathbb{R}^{2}}|u|^{3}\phi_{\sigma'}\left(\frac{x}{\lambda_{1}(t)}\right)\phi_{\delta_{2}}\left(\frac{y}{\lambda_{2}(t)}\right)\,\mathrm{d}x\,\mathrm{d}y\\
& \lesssim \sum_{m\in\mathbb{Z}}\sum_{n\in\mathbb{Z}}\|u\chi_{m}\chi_{n}\|_{L^{2}_{xy}}^{2}\left(\max_{x\in[n,n+1]}\phi_{\sigma'}\left(\frac{x}{\lambda_{1}(t)}\right)\right)\left(\max_{y\in[m,m+1]}\phi_{\delta_{2}}\left(\frac{y}{\lambda_{2}(t)}\right)\right)\\
&\lesssim \sum_{m\in\mathbb{Z}}\sum_{n\in\mathbb{Z}}\left(\int_{\mathbb{R}^{2}}u^{2}\chi_{m}^{2}\chi_{n}^{2}\phi_{\sigma'}\left(\frac{x}{\lambda_{1}(t)}\right)\phi_{\delta_{2}}\left(\frac{y}{\lambda_{2}(t)}\right)\,\mathrm{d}x\,\mathrm{d}y\right)\\
&\lesssim \int_{\mathbb{R}^{2}}u^{2}\phi_{\sigma'}\left(\frac{x}{\lambda_{1}(t)}\right)\phi_{\delta_{2}}\left(\frac{y}{\lambda_{2}(t)}\right)\,\mathrm{d}x\,\mathrm{d}y.
\end{split}
\end{equation*}
(Note that the implicit constants in the inequalities above does not depend on the variable $m$ nor $n.$)

\medskip

In summary, we have proved that
\[
\begin{split}
|A_{1,1,4}(t)| &\lesssim \frac{1}{\eta(t)\lambda_{1}(t)}\int_{\mathbb{R}^{2}}|u|^{3}\phi_{\sigma'}\left(\frac{x}{\lambda_{1}(t)}\right)\phi_{\delta_{2}}\left(\frac{y}{\lambda_{2}(t)}\right)\,\mathrm{d}x\,\mathrm{d}y\\
&\lesssim \frac{1}{\eta(t)\lambda_{1}(t)}\ \int_{\mathbb{R}^{2}}u^{2}\phi_{\sigma'}\left(\frac{x}{\lambda_{1}(t)}\right)\phi_{\delta_{2}}\left(\frac{y}{\lambda_{2}(t)}\right)\,\mathrm{d}x\,\mathrm{d}y.
\end{split}
\]
Nevertheless,  by \eqref{dT_ppal}, we have  for  $\sigma'>0$ satisfying
\[
\frac{1}{\sigma}+\frac{1}{\delta_{1}}\leq \frac{1}{\sigma'},
\]
one has
\[
\int_{\{t\gg1\}}\left(\frac{1}{\eta(t)\lambda_{1}(t)}\ \int_{\mathbb{R}^{2}}u^{2}\phi_{\sigma'}\left(\frac{x}{\lambda_{1}(t)}\right)\phi_{\delta_{2}}\left(\frac{y}{\lambda_{2}(t)}\right)\,\mathrm{d}x\,\mathrm{d}y\right)\,\mathrm{d}t <\infty,
\]
 which clearly implies that $A_{1,1,4}\in L^{1}\left(\{t\gg 1\}\right).$ Summarizing, $A_{1,1}$ defined in \eqref{divisionA11A12} and \eqref{divisionA11} satisfies
\[
A_{1,1}(t) = A_{1,1,1}(t)  +A_{1,1,2}(t)  + A_{int}(t), \qquad A_{int}(t)\in L^{1}\left(\{t\gg 1\}\right).
\]

\medskip

Next, recall $A_{1,2}(t)$ from \eqref{divisionA11A12}. We have
\begin{equation*}
\begin{split}
A_{1,2}(t)&=-\frac{2}{ \lambda_{1}(t)\eta(t)}\int_{\mathbb{R}^{2}}\left(\partial_{x}u\right)^{2}\phi_{\sigma'}\left(\frac{x}{\lambda_{1}(t)}\right)\phi_{\delta_{2}}\left(\frac{y}{\lambda_{2}(t)}\right)\,\mathrm{d}x\,\mathrm{d}y\\
&\quad +\frac{1}{ \lambda_{1}^{3}(t)\eta(t)}\int_{\mathbb{R}^{2}}u^{2}\phi_{\sigma'}''\left(\frac{x}{\lambda_{1}(t)}\right)\phi_{\delta_{2}}\left(\frac{y}{\lambda_{2}(t)}\right)\,\mathrm{d}x\,\mathrm{d}y\\
&\quad -\frac{2}{ \lambda_{1}(t)\eta(t)}\int_{\mathbb{R}^{2}}\left(\partial_{y}u\right)^{2}\phi_{\sigma'}\left(\frac{x}{\lambda_{1}(t)}\right)\phi_{\delta_{2}}\left(\frac{y}{\lambda_{2}(t)}\right)\,\mathrm{d}x\,\mathrm{d}y\\
&\quad +\frac{2}{ \lambda_{1}(t)\eta(t)\lambda_{2}^{2}(t)}\int_{\mathbb{R}^{2}}u^{2}\phi_{\sigma'}\left(\frac{x}{\lambda_{1}(t)}\right)\phi_{\delta_{2}}''\left(\frac{y}{\lambda_{2}(t)}\right)\,\mathrm{d}x\,\mathrm{d}y\\
&\quad +\frac{1}{ \lambda_{1}(t)\eta(t)}\int_{\mathbb{R}^{2}}u^{3}\phi_{\sigma'}\left(\frac{x}{\lambda_{1}(t)}\right)\phi_{\delta_{2}}\left(\frac{y}{\lambda_{2}(t)}\right)\,\mathrm{d}x\,\mathrm{d}y\\
&=A_{1,2,1}(t)+A_{1,2,2}(t)+A_{1,2,3}(t)+A_{1,2,4}(t)+A_{1,2,5}(t).
\end{split}
\end{equation*}
We only will be focus on the terms  $A_{1,2,2}$ and $A_{1,2,4},$ since  $A_{1,2,1}$  is the quantity to be estimated after integrating in time in \eqref{IntegraH1}, the same as $A_{1,2,3}.$ Note additionally that the bad sign term $(\partial_y u)^2$ in \eqref{divisionA11} is solved by adding the term $A_{1,2,3}$. Finally, for $A_{1,2,5}$  we  described above how to obtain upper bounds that implies $A_{1,2,5}\in L^{1}\left(\{t\gg 1\}\right),$ and we omit its proof here.

\medskip

Thus, since from \eqref{eq4}  $3b+p>1$,
\begin{equation*}
\begin{split}
|A_{1,1,2}(t)| &\lesssim_{\|u_{0}\|_{L^{2}_{xy}}} \frac{1}{\lambda_{1}^{3}	(t)\eta(t)}\in L^{1}\left(\{t\gg 1\}\right).
\end{split}
\end{equation*}
For $A_{1,2,4}$  we have that $p+ b+ 2br >1$ and
\begin{equation*}
|A_{1,2,4}(t)| \lesssim_{\|u_{0}\|_{L^{2}_{xy}}}\frac{1}{\eta(t)\lambda_{1}(t)\lambda_{2}^{2}(t)}\in L^{1}\left(\{t\gg 1\}\right).
\end{equation*}
We partially conclude from \eqref{main2} and the previous computations that
\begin{equation}\label{main2_final}
\begin{aligned}
\frac{\mathrm{d}}{\mathrm{d}t}\mathcal{Q}(t)  = &~{}  -\frac{3}{ \lambda_{1}(t)\eta(t)}\int_{\mathbb{R}^{2}}\left(\partial_{x}u\right)^{2}\phi_{\sigma'}\left(\frac{x}{\lambda_{1}(t)}\right)\phi_{\delta_{2}}\left(\frac{y}{\lambda_{2}(t)}\right)\,\mathrm{d}x\,\mathrm{d}y \\
&   -\frac{1}{ \lambda_{1}(t)\eta(t)}\int_{\mathbb{R}^{2}}\left(\partial_{y}u\right)^{2}\phi_{\sigma'}\left(\frac{x}{\lambda_{1}(t)}\right)\phi_{\delta_{2}}\left(\frac{y}{\lambda_{2}(t)}\right)\,\mathrm{d}x\,\mathrm{d}y \\
&  + A_{int}(t) +A_2(t) +A_3(t) +A_4(t),
\end{aligned}
\end{equation}
with $A_{int}(t)\in L^{1}\left(\{t\gg 1\}\right)$. Finally, we  consider the remainders terms in \eqref{main2}. First,
\[
|A_{2}(t)|\lesssim_{\|u_{0}\|_{L^{2}_{xy}}}\frac{\lambda_{2}'(t)}{\eta(t)\lambda_{2}(t)}\in L^{1}\left(\{t\gg 1\}\right),
\]
since $p>0.$ The term $A_3(t)$ is completely similar. Finally,
\begin{equation*}
|A_{4}(t)|\lesssim_{\|u_{0}\|_{L^{2}_{xy}}} \frac{\eta'(t)}{\eta^{2}(t)}\in L^{1}\left(\{t\gg 1\}\right),
\end{equation*}
since $p>0.$ Gathering these estimates in \eqref{main2_final}, we conclude \eqref{IntegraH1} in the same vein as in the proof of Theorem \ref{Thmdim2L2}.

%


\bigskip

\section{The 3D case. Proof of Theorem \ref{Thmdim3H1}}\label{Sect:5}

\medskip

The proof in the 3D case follow similar lines as the one in 2D, but it is clearly more cumbersome. 

\subsection{$L^1$ virial and $L^2$ local decay} As we did in the previous case,  we start by defining the functional that will provide the mass behavior associated with the solutions to \eqref{ZK:Eq}  in the case $d=3$. More precisely, we set
\begin{equation}\label{e1}
\Xi(t)=\frac{1}{\eta(t)}\int_{\mathbb{R}^{3}}u(x,y,z,t)\psi_{\sigma}\left(\frac{x}{\lambda_{1}(t)}\right)\phi_{\delta_{1}}\left(\frac{x}{\lambda_{2}(t)}\right)\phi_{\delta_{2}}\left(\frac{y}{\lambda_{3}(t)}\right)\phi_{\delta_{3}}\left(\frac{z}{\lambda_{4}(t)}\right)\,\mathrm{d}x\mathrm{d}y\mathrm{d}z.
\end{equation}
where
\begin{equation}\label{lambdas3}
\lambda_{1}(t)=\frac{t^{p_{1}}}{\ln^{q_{1}}(t)},\quad  \lambda_{2}(t)=t^{p_{2}}, \quad \lambda_{3}(t)= t^{p_{3}}, \quad \mbox{and}\quad \lambda_{4}(t)=t^{p_{4}}, 
\end{equation}
with $p_{1},p_{2},p_{3},p_{4} > 0$ and $q_{1},q_{2},q_{3},q_{4}> 0$ parameters to be determined.
Also, we consider
\begin{equation}\label{eta3}
\eta(t)=t^{r_{1}}\ln^{r_{2}}(t),
\end{equation}
We will consider $p_{1},r_{1}$ and $q_{1},r_{2}$ satisfying the following conditions:
\begin{equation}\label{e5}
r_{1}=1-p_1,\quad p_{1},r_{1}>0, \quad r_{2}=1+q_{1},\,q_{1}>0.
\end{equation}
The  parameters $p_{1},p_{2},p_{3}$ and $p_{4},$  are chosen in such a way  to  satisfy the following array of conditions
\begin{equation}\label{e40}
p_1,p_2,p_3,p_4>0, \quad p_1<1,
\end{equation}
\begin{equation}\label{e4}
0<2p_{1}+p_{2}+p_{3}+p_{4}< 2,
\end{equation}
\begin{equation}\label{e6}
p_2>p_1,
\end{equation}
\begin{equation}\label{new1}
p_3>p_1,
\end{equation}
\begin{equation}\label{new2}
p_4>p_1,
\end{equation}
\begin{equation}\label{e7}
p_{1}> \frac13(p_3+ p_4),
\end{equation}
\begin{equation}\label{e8}
\frac12p_1+ p_{2}> \frac12 (p_3+p_4),
\end{equation}
\begin{equation}\label{e9}
p_{2}> \frac14(p_1+ p_3 +p_4),
\end{equation}
\begin{equation}\label{e10}
p_{3}>\frac13 (p_1+ p_{4}),
\end{equation}
\begin{equation}\label{e11}
p_{4}>\frac13(p_{1}+p_{3}),
\end{equation}
\begin{equation}\label{e13}
p_2 >\frac15(2p_1 +p_3+p_4),
\end{equation}
\begin{equation}\label{e14}
p_2+3p_3 >p_4+2p_1,
\end{equation}
\begin{equation}\label{e15}
p_2+3p_4 >p_3+2p_1,
\end{equation}
and
\begin{equation}\label{e16}
3p_{1}+ (p_{3}+p_{4})<2.
\end{equation}
Recall \eqref{lambdas3}. Define now
\begin{equation}\label{mP}
\mathcal P:=\{  (p_1,p_2,p_3,p_4) \in (0,\infty)^4 ~  : ~  \eqref{e40}-\eqref{e16} \hbox{ are satisfied} \},
\end{equation}
and
\[
\Omega(t):= \left\{ (x,y,z)\in \mathbb R^3 ~  :  ~  |x|\leq \lambda_1(t), \quad |y|\leq \lambda_3(t),\quad |z|\leq \lambda_4(t), \quad   (p_1,p_2,p_3,p_4) \in\mathcal P \right\}.
\]
This set is precisely the set \eqref{brconditions3D} stated in Theorem \ref{Thmdim3H1}, but with plenty of redundant conditions inside.  Before continuing, we need some simplifications in conditions \eqref{e5}-\eqref{e16}, that will lead to the simpler definition of $\Omega(t)$ in \eqref{brconditions3D}.

\begin{lem}\label{easy_P}
$\mathcal P$ is nonempty. Moreover, it can be reduced to the following simpler set of conditions
\begin{equation*}
\mathcal P = \Big\{  (p_1,p_2,p_3,p_4) \in (0,\infty)^4 ~  : ~   \eqref{e40}-\eqref{e7}, \eqref{e10} \hbox{ and } \eqref{e11} \hbox{ are satisfied. }\Big\}.
\end{equation*}
In particular, the conditions $p_1<\frac12$ and $p_2,p_3,p_4>p_1$ must be satisfied.
\end{lem}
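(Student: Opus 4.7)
My plan is to prove the lemma in three independent stages: (i) show that conditions \eqref{e8}, \eqref{e9}, \eqref{e13}, \eqref{e14}, \eqref{e15}, \eqref{e16} are implied by the core conditions \eqref{e40}--\eqref{e7}, \eqref{e10}, \eqref{e11}; (ii) derive the upper bound $p_1<\tfrac12$ from them; (iii) exhibit an explicit tuple to certify $\mathcal P\neq\emptyset$.

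For step (i), the guiding principle is that every inequality of the form ``$p_2>$\,(something linear in $p_1,p_3,p_4)$'' can be reduced, via \eqref{e6}, to checking that $p_1$ majorizes the same right-hand side, and then \eqref{e7} does the job. Concretely: \eqref{e7} is equivalent to $3p_1>p_3+p_4$, which immediately gives $p_1>\tfrac14(p_1+p_3+p_4)$ and $p_1>\tfrac15(2p_1+p_3+p_4)$, so \eqref{e9} and \eqref{e13} follow from $p_2>p_1$. For \eqref{e8}, the same chain $p_2>p_1>\tfrac13(p_3+p_4)$ yields
\[
\tfrac12 p_1+p_2 \;>\; \tfrac12\cdot\tfrac13(p_3+p_4)+\tfrac13(p_3+p_4) \;=\; \tfrac12(p_3+p_4).
\]
For \eqref{e14} and \eqref{e15}, multiply \eqref{e10} and \eqref{e11} by $3$ and add $p_2>p_1$ to obtain $p_2+3p_3>2p_1+p_4$ and $p_2+3p_4>2p_1+p_3$ respectively. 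Finally, \eqref{e16} is a one-line consequence of \eqref{e4} and \eqref{e6}: $3p_1+p_3+p_4<2p_1+p_2+p_3+p_4<2$.

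For step (ii), I would add \eqref{e10} and \eqref{e11} to get
\[
p_3+p_4 \;>\; \tfrac13\bigl(2p_1+p_3+p_4\bigr),\qquad\text{i.e.}\qquad p_3+p_4>p_1.
\]
Combining with $p_2>p_1$ from \eqref{e6} and with the upper bound in \eqref{e4} gives $4p_1<2p_1+p_2+(p_3+p_4)<2$, hence $p_1<\tfrac12$. The asserted bounds $p_2,p_3,p_4>p_1$ are simply \eqref{e6}, \eqref{new1}, \eqref{new2}, already in the reduced list.

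For step (iii), I would take the symmetric configuration $p_1=\tfrac13-\varepsilon$ and $p_2=p_3=p_4=\tfrac13+\varepsilon$ for a small $\varepsilon>0$. Then \eqref{e7}, \eqref{e10} and \eqref{e11} all reduce, up to $O(\varepsilon)$, to the true equality $\tfrac13=\tfrac13\cdot\tfrac23$; the three strict inequalities $p_i>p_1$ are immediate, and \eqref{e4} becomes $\tfrac53+2\varepsilon<2$, which holds for any $\varepsilon<\tfrac16$. No step here presents a genuine obstacle: the lemma is a purely linear-arithmetic verification, and the only bookkeeping subtlety is organizing the implications so that each redundant condition is obtained from the minimal set (\eqref{e6}--\eqref{e7}, \eqref{e10}, \eqref{e11}) exactly once, as done above.
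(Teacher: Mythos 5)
Your argument is correct, and it reaches the lemma by a more elementary route than the paper. The paper first normalizes $\tilde p_i=p_i/p_1$, describes \eqref{e7}, \eqref{e10}, \eqref{e11} as the interior of a triangle in the $(\tilde p_3,\tilde p_4)$-plane, extracts the bounds $1<\tilde p_3+\tilde p_4<3$ together with $\tilde p_4-3\tilde p_3<-1$, $\tilde p_3-3\tilde p_4<-1$, and then checks graphically that $\tilde p_2>1$ (i.e. \eqref{e6}) dominates the normalized forms of \eqref{e8}, \eqref{e9}, \eqref{e13}, that \eqref{e14}--\eqref{e15} follow from those auxiliary bounds, and that \eqref{e16} is weaker than \eqref{e4}; the bound $p_1<\tfrac12$ then drops out of the normalized form of \eqref{e4}. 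You obtain exactly the same implications by direct linear combinations of the unnormalized inequalities (e.g.\ $3\times$\eqref{e10}$+$\eqref{e6} gives \eqref{e14}, and \eqref{e6}$+$\eqref{e7} gives \eqref{e9}, \eqref{e13}), which is logically equivalent but dispenses with the normalization and the figure; your derivation of $p_1<\tfrac12$ via $p_3+p_4>p_1$ (from adding \eqref{e10} and \eqref{e11}) together with \eqref{e6} and \eqref{e4} is likewise a clean shortcut. Two harmless slips in your nonemptiness check: with $p_1=\tfrac13-\varepsilon$, $p_2=p_3=p_4=\tfrac13+\varepsilon$ the sum in \eqref{e4} is $\tfrac53+\varepsilon$, not $\tfrac53+2\varepsilon$, and at $\varepsilon=0$ the conditions \eqref{e7}, \eqref{e10}, \eqref{e11} are not equalities but strict inequalities $\tfrac13>\tfrac29$ — which is in fact what makes the perturbation work. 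Your example is even slightly cleaner than the paper's point $\left(\tfrac13,\tfrac13+\epsilon_0,\tfrac13,\tfrac13\right)$, since yours satisfies \eqref{new1} and \eqref{new2} strictly, as the definition of $\mathcal P$ in \eqref{mP} demands.
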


Using this result, it is easy to describe the set $\Omega(t)$ in  \eqref{brconditions3D}, probably taking $b$ slightly smaller if needed. Just redefine $p_1=b$, $p_3=br_1$ and $p_4=br_2$. The parameter $p_2$ can be written as $p_1+\epsilon_0=b+\epsilon_0$, any $\epsilon_0>0$, and it is a free parameter, leading to the last, nonlinear condition in \eqref{brconditions3D}. Also, the two conditions \eqref{new1}-\eqref{new2} are not needed for proving $L^2$ decay, only for proving decay of the $\dot H^1$ norm.

\begin{proof}[Proof of Lemma \ref{easy_P}]
It is easy to check that for any $\epsilon_0>0$ but small, the point $\left(\frac13, \frac13+\epsilon_0,\frac13,\frac13 \right)$ belongs to $\mathcal P$. On the other hand, after defining
\[
\tilde p_2 := \frac{p_2}{p_1}, \quad \tilde p_3 := \frac{p_3}{p_1}, \quad \tilde p_4 := \frac{p_4}{p_1},
\]
equations \eqref{e7}, \eqref{e10} and \eqref{e11} become
\begin{equation}\label{triangle}
1> \frac13( \tilde p_3+ \tilde p_4), \quad \tilde p_{3}>\frac13 (1+ \tilde p_{4}), \quad \tilde p_{4}>\frac13(1+ \tilde p_{3}).
\end{equation}
The solution to this system of inequalities corresponds to the interior of the triangle of vertices $(\frac12,\frac12)$, $(2,1)$ and $(1,2)$ in the $\tilde p_3-\tilde p_4$ plane (see Fig. \ref{fig:5}).
The extrema of the function $a:=\tilde p_3+ \tilde p_4$ in this set satisfies
\[
1<a=\tilde p_3+ \tilde p_4 <3.
\]
Also, it is not difficult to check
\begin{equation}\label{supremos}
\tilde p_4 -3 \tilde p_3 <-1, \quad \tilde p_3 -3 \tilde p_4 <-1.
\end{equation}
Now, equations \eqref{e6}, \eqref{e8}, \eqref{e9} and \eqref{e13} reduce to
\begin{equation}\label{lines}
\tilde p_2 >1, \quad  \tilde p_{2}> \frac12 (a-1), \quad \tilde p_{2}> \frac14(1+ a), \quad \tilde p_2 >\frac15(2 +a).
\end{equation}
A quick checking (see Fig. \ref{fig:5}) reveals that the condition $\tilde p_2 >1$ is the most restrictive one. Also, from \eqref{e14} and \eqref{e15} one has
\[
\tilde p_2  > \tilde p_4 -  3 \tilde p_3 +2, \qquad \tilde p_2 > \tilde p_3 -3\tilde p_4 +2.
\]
From the condition $\tilde p_2>1$ and \eqref{supremos} we have that the two last conditions are redundant.

\medskip

Now we consider the conditions \eqref{e4} and \eqref{e16}. Written in terms of normalized variables, one has
\[
0<2+\tilde p_{2}+a< \frac{2}{p_1}, \qquad 3+ a <\frac{2}{p_1}.
\]
Since $a>0$ and $\tilde p_2>1$, always $2+\tilde p_{2}+a>3+ a$. Therefore, the last condition is redundant. Finally, \eqref{e40} and $2+\tilde p_{2}+a< \frac{2}{p_1}$ imply that $p_1$ must be below $\frac12$.

\medskip

There are two remaining conditions to be considered. These are \eqref{new1} and \eqref{new2}, which become $\tilde p_3>1$ and $\tilde p_4>1$. The representation of these conditions can be found in Fig. \ref{fig:5}, left panel.

\medskip

This ends the proof of the lemma.
\end{proof}

\begin{figure}[h!]
\begin{center}
\begin{tikzpicture}[scale=0.9]
\filldraw[thick, color=lightgray!30] (1,1) -- (4,2) -- (2,4) -- (1,1);
\filldraw[thick, color=lightgray!70] (2,2) -- (4,2) -- (2,4) -- (2,2);
\draw[thick, dashed] (1,1) -- (4,2) -- (2,4) -- (1,1);
\draw[thick,dashed] (4,1) -- (1,4);
\draw[thick,dashed] (2,0) -- (2,4);
\draw[thick,dashed] (0,2) -- (4,2);
\draw[->] (-0.5,0) -- (5,0) node[below] {$\tilde p_3$};
\draw[->] (0,-0.5) -- (0,5) node[right] {$\tilde p_4$};
\node at (1,0){$\bullet$};
\node at (1,-0.5){$\frac 12$};
\node at (4,0){$\bullet$};
\node at (4,-0.5){$2$};
\node at (0,4){$\bullet$};
\node at (-0.5,4){$2$};
\node at (0,2){$\bullet$};
\node at (-0.5,2){$1$};
\node at (0,1){$\bullet$};
\node at (-0.5,1){$\frac12$};
\node at (2,0){$\bullet$};
\node at (2,-0.5){$1$};
\node at (5.3,1){\small $\tilde p_3+\tilde p_4=a$};
\end{tikzpicture}
\quad
\begin{tikzpicture}[scale=0.7]
\filldraw[thick, color=lightgray!90] (2,4)--(6,4) -- (6,5) --(2,5);
\filldraw[thick, color=lightgray!50] (2,12/5)--(6,4) -- (2,4)--(2,12/5);
\filldraw[thick, color=lightgray!30] (2,2)--(6,4) -- (2,12/5)--(2,2);
\filldraw[thick, color=lightgray!10] (2,0)--(6,4) -- (2,2)--(2,0);
\draw[thick,dashed] (0,4) -- (6,4);
\draw[thick,dashed] (0,1.6) -- (6,4);
\draw[thick,dashed] (0,1) -- (6,4);
\draw[thick,dashed] (0,-2) -- (6,4);
\draw[thick,dashed] (2,0) -- (2,5);
\draw[thick,dashed] (6,0) -- (6,5);
\draw[->] (-0.5,0) -- (7,0) node[below] {$a$};
\draw[->] (0,-2.5) -- (0,5.5) node[right] {$\tilde p_2$};
\node at (6,0){$\bullet$};
\node at (6,4){$\bullet$};
\node at (6,-0.5){$3$};
\node at (2,0){$\bullet$};
\node at (2,-0.5){$1$};
\node at (0,4){$\bullet$};
\node at (-0.5,4){$1$};
\node at (0,1.6){$\bullet$};
\node at (-0.5,1.6){$\frac25$};
\node at (0,1){$\bullet$};
\node at (0.5,0.7){$\frac14$};
\node at (0,-2){$\bullet$};
\node at (-0.5,-2){$-\frac12$};
\end{tikzpicture}
\end{center}
\caption{(\emph{Left}). Representation of the factible set described in \eqref{triangle} (light and dark shadowed region). If one includes \eqref{new1} and \eqref{new2}, then only the region $\tilde p_3,\tilde p_4> 1$ must be considered (dark shadowed region). (\emph{Right}). The intersection of the regions described in \eqref{lines} (dark shadowed region). }\label{fig:5}
\end{figure}
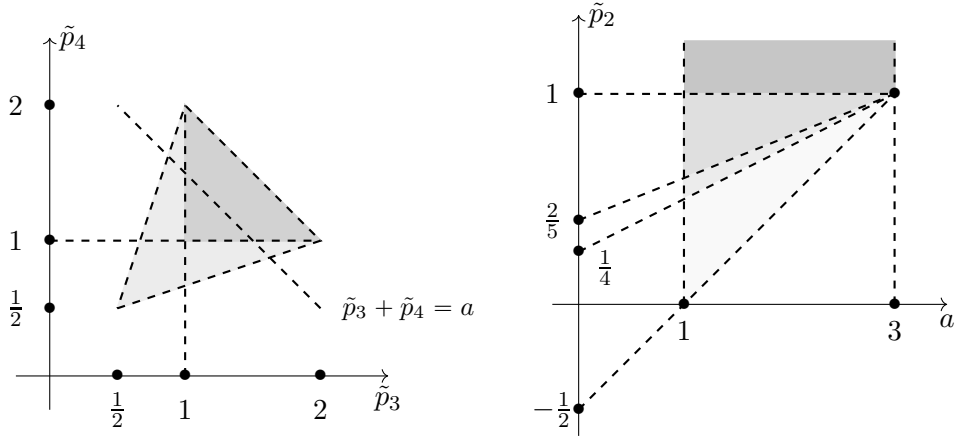

\bigskip

Now we continue with the estimate of $\Xi(t)$.

\medskip

\noindent
{\sc Claim:} Under \eqref{e5} and \eqref{e4}, the functional \eqref{e1} is well-defined.
\begin{proof}
Assume \eqref{e5} and \eqref{e4}. Since $u\in L^{2}$ and the mass is conserved, it is clear that
\[
\begin{split}
\left|\Xi(t)\right|&\lesssim_{\|u_{0}\|_{L^{2}}} \frac{\left(\lambda_{2}(t)\lambda_{3}(t)\lambda_{4}(t)\right)^{1/2}}{\eta(t)}.
\end{split}
\]
Therefore, from \eqref{lambdas3} and \eqref{eta3},
\[
\sup_{t\gg1}|\Xi(t)|<\infty,
\]
and the claim is proved.
\end{proof}

\subsubsection{Mass behavior} Now we compute the evolution of $\Xi(t)$. We follow similar estimates as the one performed to prove Lemma \ref{le:dT}. First of all,
\begin{equation}\label{ee3}
\begin{split}
\frac{\mathrm{d}}{\mathrm{d}t}\Xi(t)&=\frac{1}{\eta(t)}\int_{\mathbb{R}^{2}} \partial_{t}u \psi_{\sigma}\left(\frac{x}{\lambda_{1}(t)}\right)\phi_{\delta_{1}}\left(\frac{x}{\lambda_{2}(t)}\right)\phi_{\delta_{2}}\left(\frac{y}{\lambda_{3}(t)}\right)\phi_{\delta_{3}}\left(\frac{z}{\lambda_{4}(t)}\right)\mathrm{d}x\mathrm{d}y\mathrm{d}z\\
&\quad- \frac{\eta'(t)}{\eta^{2}(t)}\int_{\mathbb{R}^{2}}u\psi_{\sigma}\left(\frac{x}{\lambda_{1}(t)}\right)\phi_{\delta_{1}}\left(\frac{x}{\lambda_{2}(t)}\right)\phi_{\delta_{2}}\left(\frac{y}{\lambda_{3}(t)}\right)\phi_{\delta_{3}}\left(\frac{z}{\lambda_{4}(t)}\right)\mathrm{d}x\mathrm{d}y\mathrm{d}z\\
&\quad +\frac{1}{\eta(t)}\int_{\mathbb{R}^{2}} u\partial_{t}\left( \psi_{\sigma}\left(\frac{x}{\lambda_{1}(t)}\right)\phi_{\delta_{1}}\left(\frac{x}{\lambda_{2}(t)}\right)\phi_{\delta_{2}}\left(\frac{y}{\lambda_{3}(t)}\right)\phi_{\delta_{3}}\left(\frac{z}{\lambda_{4}(t)}\right)\right)\mathrm{d}x\mathrm{d}y\mathrm{d}z\\
&=\Xi_{1}(t)+ \Xi_{2}(t)+ \Xi_{3}(t).
\end{split}
\end{equation}
First, we bound  $\Xi_{1}$. Using \eqref{ZK:Eq} in the 3D case, we get after applying integration by parts
\begin{equation}\label{ee4}
\begin{split}
&\Xi_{1}(t)\\
&=\frac{1}{\eta(t)}\int_{\mathbb{R}^{3}}\left(\Delta u+ \frac{u^{2}}{2}\right) \partial_x \left( \psi_{\sigma}\left(\frac{x}{\lambda_{1}(t)}\right)\phi_{\delta_{1}}\left(\frac{x}{\lambda_{2}(t)}\right) \right) \phi_{\delta_{2}}\left(\frac{y}{\lambda_{3}(t)}\right)\phi_{\delta_{3}}\left(\frac{z}{\lambda_{4}(t)}\right)\mathrm{d}x\mathrm{d}y\mathrm{d}z\\
&=\frac{1}{\eta(t)\lambda_{1}(t)}\int_{\mathbb{R}^{3}}\left(\Delta u+ \frac{u^{2}}{2}\right)\phi_{\sigma}\left(\frac{x}{\lambda_{1}(t)}\right)\phi_{\delta_{1}}\left(\frac{x}{\lambda_{2}(t)}\right)\phi_{\delta_{2}}\left(\frac{y}{\lambda_{3}(t)}\right)\phi_{\delta_{3}}\left(\frac{z}{\lambda_{4}(t)}\right)\mathrm{d}x\mathrm{d}y\mathrm{d}z\\
&\quad+ \frac{1}{\eta(t)\lambda_{2}(t)}\int_{\mathbb{R}^{3}}\left(\Delta u +\frac{u^{2}}{2}\right)\psi_{\sigma}\left(\frac{x}{\lambda_{1}(t)}\right)\phi_{\delta_{1}}'\left(\frac{x}{\lambda_{2}(t)}\right)\phi_{\delta_{2}}\left(\frac{y}{\lambda_{3}(t)}\right)\phi_{\delta_{3}}\left(\frac{z}{\lambda_{4}(t)}\right)\mathrm{d}x\mathrm{d}y\mathrm{d}z\\
&= \Xi_{1,1}(t)+\Xi_{1,2}(t).
\end{split}
\end{equation}
The term $\Xi_{1,1}$ will be treated as follows:
\begin{equation}\label{eee1}
\begin{aligned}
& \Xi_{1,1}(t) =\\
& = \frac{1}{\eta(t)\lambda_{1}(t)}\int_{\mathbb{R}^{3}} \Delta u \phi_{\sigma}\left(\frac{x}{\lambda_{1}(t)}\right)\phi_{\delta_{1}}\left(\frac{x}{\lambda_{2}(t)}\right)\phi_{\delta_{2}}\left(\frac{y}{\lambda_{3}(t)}\right)\phi_{\delta_{3}}\left(\frac{z}{\lambda_{4}(t)}\right)\mathrm{d}x\mathrm{d}y\mathrm{d}z\\
& \quad +\frac{1}{\eta(t)\lambda_{1}(t)}\int_{\mathbb{R}^{3}} \frac{u^{2}}{2} \phi_{\sigma}\left(\frac{x}{\lambda_{1}(t)}\right)\phi_{\delta_{1}}\left(\frac{x}{\lambda_{2}(t)}\right)\phi_{\delta_{2}}\left(\frac{y}{\lambda_{3}(t)}\right)\phi_{\delta_{3}}\left(\frac{z}{\lambda_{4}(t)}\right)\mathrm{d}x\mathrm{d}y\mathrm{d}z\\
& = \Xi_{1,1,1}(t) +\Xi_{1,1,2}(t).
\end{aligned}
\end{equation}
The term $\Xi_{1,1,2}(t)$ is precisely the term that we want to estimate, and we save it. For $\Xi_{1,1,1}$, using integration by parts, we obtain
\begin{equation*}
\begin{split}
&\Xi_{1,1,1}(t)\\
& = \frac{1}{\eta(t)\lambda_{1}(t)}\int_{\mathbb{R}^{3}} \Delta u \phi_{\sigma}\left(\frac{x}{\lambda_{1}(t)}\right)\phi_{\delta_{1}}\left(\frac{x}{\lambda_{2}(t)}\right)\phi_{\delta_{2}}\left(\frac{y}{\lambda_{3}(t)}\right)\phi_{\delta_{3}}\left(\frac{z}{\lambda_{4}(t)}\right)\mathrm{d}x\mathrm{d}y\mathrm{d}z\\
&=\frac{1}{\eta(t)(\lambda_{1}(t))^{3}}\int_{\mathbb{R}^{3}}u\phi_{\sigma}''\left(\frac{x}{\lambda_{1}(t)}\right)\phi_{\delta_{1}}\left(\frac{x}{\lambda_{2}(t)}\right)\phi_{\delta_{2}}\left(\frac{y}{\lambda_{3}(t)}\right)\phi_{\delta_{3}}\left(\frac{z}{\lambda_{4}(t)}\right)\mathrm{d}x\mathrm{d}y\mathrm{d}z\\
&\quad +\frac{2}{\eta(t)(\lambda_{1}(t))^{2}\lambda_{2}(t)}\int_{\mathbb{R}^{3}}u\phi_{\sigma}'\left(\frac{x}{\lambda_{1}(t)}\right)\phi_{\delta_{1}}'\left(\frac{x}{\lambda_{2}(t)}\right)\phi_{\delta_{2}}\left(\frac{y}{\lambda_{3}(t)}\right)\phi_{\delta_{3}}\left(\frac{z}{\lambda_{4}(t)}\right)\mathrm{d}x\mathrm{d}y\mathrm{d}z\\
&\quad+ \frac{1}{\eta(t)\lambda_{1}(t)(\lambda_{2}(t))^{2}}\int_{\mathbb{R}^{3}}u\phi_{\sigma}\left(\frac{x}{\lambda_{1}(t)}\right)\phi_{\delta_{1}}''\left(\frac{x}{\lambda_{2}(t)}\right)\phi_{\delta_{2}}\left(\frac{y}{\lambda_{3}(t)}\right)\phi_{\delta_{3}}\left(\frac{z}{\lambda_{4}(t)}\right)\mathrm{d}x\mathrm{d}y\mathrm{d}z\\
&\quad +\frac{1}{\eta(t)\lambda_{1}(t)(\lambda_{3}(t))^{2}}\int_{\mathbb{R}^{3}}u\phi_{\sigma}\left(\frac{x}{\lambda_{1}(t)}\right)\phi_{\delta_{1}}\left(\frac{x}{\lambda_{2}(t)}\right)\phi_{\delta_{2}}''\left(\frac{y}{\lambda_{3}(t)}\right)\phi_{\delta_{3}}\left(\frac{z}{\lambda_{4}(t)}\right)\mathrm{d}x\mathrm{d}y\mathrm{d}z\\
&\quad +\frac{1}{\eta(t)\lambda_{1}(t)(\lambda_{4}(t))^{2}}\int_{\mathbb{R}^{3}}u\phi_{\sigma}\left(\frac{x}{\lambda_{1}(t)}\right)\phi_{\delta_{1}}\left(\frac{x}{\lambda_{2}(t)}\right)\phi_{\delta_{2}}\left(\frac{y}{\lambda_{3}(t)}\right)\phi_{\delta_{3}}''\left(\frac{z}{\lambda_{4}(t)}\right)\mathrm{d}x\mathrm{d}y\mathrm{d}z\\
&=\Xi_{1,1,1,1}(t)+\Xi_{1,1,1,2}(t)+\Xi_{1,1,1,3}(t)+\Xi_{1,1,1,4}(t)+\Xi_{1,1,1,5}(t).
\end{split}
\end{equation*}
Recall that $p_2>p_1$ from \eqref{e6}. Therefore, $\Xi_{1,1,1,1}$ is bounded as
\[
\begin{split}
|\Xi_{1,1,1,1}(t)| &\lesssim \frac{(\lambda_{3}(t)\lambda_{4}(t))^{1/2}}{\eta(t)(\lambda_{1}(t))^{5/2}}\in L^{1}\left(\{t\gg 1\}\right),
\end{split}
\]
since $p_{1}> \frac13(p_3+ p_4)$ in \eqref{e7}. Next,
\[
|\Xi_{1,1,1,2}(t)| \lesssim \frac{(\lambda_{3}(t)\lambda_{4}(t))^{1/2}}{\eta(t)(\lambda_{1}(t))^{3/2}\lambda_{2}(t)}\in  L^{1}\left(\{t\gg 1\}\right),
\]
whenever $\frac{p_{1}}{2} + p_{2}>\frac{p_{3}+p_{4}}{2}$ in \eqref{e8}.

\medskip

The estimates for $\Xi_{1,1,1,3}(t)$, $\Xi_{1,1,1,4}(t)$, and $\Xi_{1,1,1,5}(t)$ are very similar in nature. For $\Xi_{1,1,1,3}$  we have that
\[
|\Xi_{1,1,1,3}(t)|\lesssim \frac{(\lambda_{3}(t))^{1/2}(\lambda_{4}(t))^{1/2}}{\eta(t)(\lambda_{1}(t))^{1/2}(\lambda_{2}(t))^{2}}\in  L^{1}\left(\{t\gg 1\}\right),
\]
since $p_{2}> \frac14(p_1+ p_3 +p_4)$ in \eqref{e9}. Next, for $\Xi_{1,1,1,4}$  we have that
\[
|\Xi_{1,1,1,4}(t)|\lesssim \frac{(\lambda_{4}(t))^{1/2}}{\eta(t)(\lambda_{1}(t))^{1/2}(\lambda_{3}(t))^{3/2}}\in L^{1}\left(\{t\gg 1\}\right),
\]
since $p_{3}>\frac13 (p_1+ p_{4})$ in \eqref{e10}. Finally, we focus our attention on $\Xi_{1,1,1,5},$ to get the bound
\[
|\Xi_{1,1,1,5}(t)|\lesssim \frac{(\lambda_{3}(t))^{1/2}}{\eta(t)(\lambda_{1}(t))^{1/2}(\lambda_{4}(t))^{3/2}}\in L^{1}\left(\{t\gg 1\}\right),
\]
valid for $p_{4}>\frac13(p_{1}+p_{3})$, thanks to \eqref{e11}. We conclude that
\[
|\Xi_{1,1,1}(t)| \lesssim \frac{1}{t^{1+\kappa_0} \log^{\epsilon_0}(t)} \in L^{1}\left(\{t\gg 1\}\right),
\]
for some fixed positive constants  $\kappa_0$ and $\epsilon_0$ depending on the parameters $p_i$, $q_i$, $i=1,2,3,4$.

\medskip

Concerning the term $\Xi_{1,2}(t)$ from \eqref{ee4}, we first set
\begin{equation}\label{e_12}
\begin{split}
&\Xi_{1,2}(t) \\
&= \frac{1}{\eta(t)\lambda_{2}(t)}\int_{\mathbb{R}^{3}}\Delta u \psi_{\sigma}\left(\frac{x}{\lambda_{1}(t)}\right)\phi_{\delta_{1}}'\left(\frac{x}{\lambda_{2}(t)}\right)\phi_{\delta_{2}}\left(\frac{y}{\lambda_{3}(t)}\right)\phi_{\delta_{3}}\left(\frac{z}{\lambda_{4}(t)}\right)\mathrm{d}x\mathrm{d}y\mathrm{d}z\\
&\quad +\frac{1}{2\eta(t)\lambda_{2}(t)}\int_{\mathbb{R}^{3}}u^{2}\psi_{\sigma}\left(\frac{x}{\lambda_{1}(t)}\right)\phi_{\delta_{1}}'\left(\frac{x}{\lambda_{2}(t)}\right)\phi_{\delta_{2}}\left(\frac{y}{\lambda_{3}(t)}\right)\phi_{\delta_{3}}\left(\frac{z}{\lambda_{4}(t)}\right)\mathrm{d}x\mathrm{d}y\mathrm{d}z\\
&=\Xi_{1,2,1}(t)+\Xi_{1,2,2}(t).
\end{split}
\end{equation}
Thus, after applying integration by parts,
\begin{equation}\label{e121}
\begin{split}
&\Xi_{1,2,1}(t)\\
&=\frac{1}{\eta(t)(\lambda_{1}(t))^{2}\lambda_{2}(t)}\int_{\mathbb{R}^{3}}u\phi_{\sigma}' \left(\frac{x}{\lambda_{1}(t)}\right)\phi_{\delta_{1}}'\left(\frac{x}{\lambda_{2}(t)}\right)\phi_{\delta_{2}}\left(\frac{y}{\lambda_{3}(t)}\right)\phi_{\delta_{3}}\left(\frac{z}{\lambda_{4}(t)}\right)\mathrm{d}x\mathrm{d}y\mathrm{d}z\\
&\quad +\frac{2}{\eta(t)\lambda_{1}(t)(\lambda_{2}(t))^{2}}\int_{\mathbb{R}^{3}}u\phi_{\sigma}\left(\frac{x}{\lambda_{1}(t)}\right)\phi_{\delta_{1}}''\left(\frac{x}{\lambda_{2}(t)}\right)\phi_{\delta_{2}}\left(\frac{y}{\lambda_{3}(t)}\right)\phi_{\delta_{3}} \left(\frac{z}{\lambda_{4}(t)}\right)\mathrm{d}x\mathrm{d}y\mathrm{d}z\\
&\quad +\frac{1}{\eta(t)(\lambda_{2}(t))^{3}}\int_{\mathbb{R}^{3}}u\psi_{\sigma}\left(\frac{x}{\lambda_{1}(t)}\right)\phi_{\delta_{1}}'''\left(\frac{x}{\lambda_{2}(t)}\right)\phi_{\delta_{2}}\left(\frac{y}{\lambda_{3}(t)}\right)\phi_{\delta_{3}} \left(\frac{z}{\lambda_{4}(t)}\right)\mathrm{d}x\mathrm{d}y\mathrm{d}z\\
&\quad +\frac{1}{\eta(t)\lambda_{2}(t) (\lambda_{3}(t))^{2}}\int_{\mathbb{R}^{3}}u\psi_{\sigma}\left(\frac{x}{\lambda_{1}(t)}\right)\phi_{\delta_{1}}'\left(\frac{x}{\lambda_{2}(t)}\right)\phi_{\delta_{2}}''\left(\frac{y}{\lambda_{3}(t)}\right)\phi_{\delta_{3}} \left(\frac{z}{\lambda_{4}(t)}\right)\mathrm{d}x\mathrm{d}y\mathrm{d}z\\
&\quad +\frac{1}{\eta(t)\lambda_{2}(t)(\lambda_{4}(t))^2}\int_{\mathbb{R}^{3}}u\psi_{\sigma}\left(\frac{x}{\lambda_{1}(t)}\right)\phi_{\delta_{1}}'\left(\frac{x}{\lambda_{2}(t)}\right)\phi_{\delta_{2}}\left(\frac{y}{\lambda_{3}(t)}\right)\phi_{\delta_{3}}'' \left(\frac{z}{\lambda_{4}(t)}\right)\mathrm{d}x\mathrm{d}y\mathrm{d}z\\
&=\Xi_{1,2,1,1}(t)+\Xi_{1,2,1,2}(t)+\Xi_{1,2,1,3}(t)++\Xi_{1,2,1,4}(t)++\Xi_{1,2,1,5}(t).
\end{split}
\end{equation}
In the first place,
\[
\begin{split}
| \Xi_{1,2,1,1}(t)| \lesssim \frac{(\lambda_{3}(t)\lambda_{4}(t))^{1/2}}{\eta(t)(\lambda_{1}(t))^{3/2}\lambda_{2}(t)}\in L^{1}\left(\left\{t\gg 1\right\}\right),
\end{split}
\]
for $\frac{p_{1}}{2}+p_{2}>\frac{p_{3}+p_{4}}{2}$ as in \eqref{e8}. Next,
\[
|\Xi_{1,2,1,2}(t) | \lesssim \frac{(\lambda_{3}(t)\lambda_{4}(t))^{1/2}}{\eta(t)(\lambda_{1}(t))^{1/2}(\lambda_{2}(t))^{2}}\in L^{1}\left(\left\{t\gg 1\right\}\right),
\]
since $p_{2}>\frac{p_{1}+p_{3}+p_{4}}{4}$ from \eqref{e9}. Similarly,
\[
|\Xi_{1,2,1,3}(t) | \lesssim \frac{(\lambda_{3}(t)\lambda_{4}(t))^{1/2}}{\eta(t)(\lambda_{2}(t))^{5/2}}\in L^{1}\left(\left\{t\gg 1\right\}\right),
\]
for $p_2 >\frac15(2p_1 +p_3+p_4)$, see \eqref{e13}.  Again,
\[
|\Xi_{1,2,1,4}(t) | \lesssim \frac{(\lambda_{4}(t))^{1/2}}{\eta(t)(\lambda_{2}(t))^{1/2}(\lambda_{3}(t))^{3/2}}\in L^{1}\left(\left\{t\gg 1\right\}\right),
\]
for $p_2+3p_3 >p_4+2p_1$ as in \eqref{e14}. We conclude with
\[
|\Xi_{1,2,1,5}(t) | \lesssim \frac{(\lambda_{3}(t))^{1/2}}{\eta(t)(\lambda_{2}(t))^{1/2} (\lambda_{4}(t))^{3/2}}\in L^{1}\left(\left\{t\gg 1\right\}\right),
\]
for $p_2+3p_4 >p_3+2p_1$ as in \eqref{e15}. We conclude that $\Xi_{1,2,1}(t)$ satisfies
\[
\Xi_{1,2,1}(t) \in L^{1}\left(\left\{t\gg 1\right\}\right).
\]
Finally, from \eqref{e_12},
\[
\begin{split}
&|\Xi_{1,2,2}(t)|\\
&=\left| \frac{1}{2\eta(t)\lambda_{2}(t)}\int_{\mathbb{R}^{3}}u^{2}\psi_{\sigma}\left(\frac{x}{\lambda_{1}(t)}\right)\phi_{\delta_{1}}'\left(\frac{x}{\lambda_{2}(t)}\right)\phi_{\delta_{2}}\left(\frac{y}{\lambda_{3}(t)}\right)\phi_{\delta_{3}} \left(\frac{z}{\lambda_{4}(t)}\right)\mathrm{d}x\mathrm{d}y\mathrm{d}z \right|\\
&\lesssim \frac{1}{\eta(t)\lambda_{2}(t)}\in L^{1}\left(\left\{t\gg 1\right\}\right),
\end{split}
\]
whenever $p_{2}>p_{1}$, which is just \eqref{e6}.

\medskip

Second, we bound  $\Xi_{2}(t)$ from \eqref{ee3}. Following a similar analysis as in the 2D case \eqref{eq1},
\begin{equation}\label{ee2}
\begin{split}
|\Xi_{2}(t)|&\leq\left|\frac{\eta'(t)}{\eta^{2}(t)}\int_{\mathbb{R}^{2}}u\psi_{\sigma}\left(\frac{x}{\lambda_{1}(t)}\right)\phi_{\delta_{1}}\left(\frac{x}{\lambda_{2}(t)}\right)\phi_{\delta_{2}}\left(\frac{y}{\lambda_{3}(t)}\right)\phi_{\delta_{3}}\left(\frac{z}{\lambda_{4}(t)}\right)\,\mathrm{d}x\mathrm{d}y\right|\\
&\lesssim_{\sigma,\delta_{1},\delta_{2}}\|u_{0}\|_{L^{2}}\frac{\left(\lambda_{2}(t)\lambda_{3}(t)\lambda_{4}(t)\right)^{1/2}|\eta'(t)|}{\eta^{2}(t)}\in L^{1}\left(\left\{t\gg 1\right\}\right),
\end{split}
\end{equation}
which is valid if $2p_1+p_2+p_3+p_4<2$, that is, \eqref{e4}. Next, from  \eqref{ee3},
\[
\begin{split}
&\Xi_{3}(t)\\
&=-\frac{\lambda_{1}'(t)}{\eta(t)\lambda_{1}(t)}\int_{\mathbb{R}^{3}}u\phi_{\sigma}\left(\frac{x}{\lambda_{1}(t)}\right)\left(\frac{x}{\lambda_{1}(t)}\right)\phi_{\delta_{1}}\left(\frac{x}{\lambda_{2}(t)}\right)\phi_{\delta_{2}}\left(\frac{y}{\lambda_{3}(t)}\right)\phi_{\delta_{3}}\left(\frac{z}{\lambda_{4}(t)}\right)\mathrm{d}x\mathrm{d}y\mathrm{d}z\\
&\quad -\frac{\lambda_{2}'(t)}{\eta(t)\lambda_{2}(t)}\int_{\mathbb{R}^{3}}u\psi_{\sigma}\left(\frac{x}{\lambda_{1}(t)}\right)\phi_{\delta_{1}}'\left(\frac{x}{\lambda_{2}(t)}\right)\left(\frac{x}{\lambda_{2}(t)}\right)\phi_{\delta_{2}}\left(\frac{y}{\lambda_{3}(t)}\right)\phi_{\delta_{3}}\left(\frac{z}{\lambda_{4}(t)}\right)\mathrm{d}x\mathrm{d}y\mathrm{d}z\\
&\quad -\frac{\lambda_{3}'(t)}{\eta(t)\lambda_{3}(t)}\int_{\mathbb{R}^{3}}u\psi_{\sigma}\left(\frac{x}{\lambda_{1}(t)}\right)\phi_{\delta_{1}}\left(\frac{x}{\lambda_{2}(t)}\right)\phi_{\delta_{2}}'\left(\frac{y}{\lambda_{3}(t)}\right)\left(\frac{y}{\lambda_{3}(t)}\right)\phi_{\delta_{3}}\left(\frac{z}{\lambda_{4}(t)}\right)\mathrm{d}x\mathrm{d}y\mathrm{d}z\\
&\quad -\frac{\lambda_{4}'(t)}{\eta(t)\lambda_{4}(t)}\int_{\mathbb{R}^{3}}u\psi_{\sigma}\left(\frac{x}{\lambda_{1}(t)}\right)\phi_{\delta_{1}}\left(\frac{x}{\lambda_{2}(t)}\right)\phi_{\delta_{2}}\left(\frac{y}{\lambda_{3}(t)}\right)\phi_{\delta_{3}}'\left(\frac{z}{\lambda_{4}(t)}\right)\left(\frac{z}{\lambda_{4}(t)}\right)\mathrm{d}x\mathrm{d}y\mathrm{d}z\\
&=\Xi_{3,1}(t)+\Xi_{3,2}(t)+\Xi_{3,3}(t)+\Xi_{3,4}(t).
\end{split}
\]
Concerning the term $\Xi_{3,1}$, we have for all $\epsilon>0,$
\[
\begin{split}
&|\Xi_{3,1}(t)|\\
&\leq \left|\frac{\lambda_{1}'(t)}{4\eta(t)\epsilon\lambda_{1}(t)}\right|\int_{\mathbb{R}^{3}}u^{2}\phi_{\sigma}\left(\frac{x}{\lambda_{1}(t)}\right)\phi_{\delta_{1}}\left(\frac{x}{\lambda_{2}(t)}\right)\phi_{\delta_{2}}\left(\frac{y}{\lambda_{3}(t)}\right)\phi_{\delta_{3}}\left(\frac{z}{\lambda_{4}(t)}\right)\mathrm{d}x\mathrm{d}y\mathrm{d}z\\
&\quad +\left|\frac{\epsilon\lambda_{1}'(t)}{\eta(t)\lambda_{1}(t)}\right| \int_{\mathbb{R}^{3}} \phi_{\sigma}\left(\frac{x}{\lambda_{1}(t)}\right)\left(\frac{x}{\lambda_{1}(t)}\right)^2\phi_{\delta_{1}}\left(\frac{x}{\lambda_{2}(t)}\right)\phi_{\delta_{2}}\left(\frac{y}{\lambda_{3}(t)}\right)\phi_{\delta_{3}}\left(\frac{z}{\lambda_{4}(t)}\right)\mathrm{d}x\mathrm{d}y\mathrm{d}z.
\end{split}
\]
Hence we get, after  choosing $\epsilon=|\lambda_{1}'(t)|>0,$
\[
\begin{split}
|\Xi_{3,1}(t)|&\lesssim \frac{1}{4\eta(t)\lambda_{1}(t)}\int_{\mathbb{R}^{3}}u^{2}\phi_{\sigma}\left(\frac{x}{\lambda_{1}(t)}\right)\phi_{\delta_{1}}\left(\frac{x}{\lambda_{2}(t)}\right)\phi_{\delta_{2}}\left(\frac{y}{\lambda_{3}(t)}\right)\phi_{\delta_{3}}\left(\frac{z}{\lambda_{4}(t)}\right)\mathrm{d}x\mathrm{d}y\mathrm{d}z\\
&\quad+\frac{\left(\lambda_{1}'(t)\right)^{2}\lambda_{3}(t)\lambda_{4}(t)}{\eta(t)}.
\end{split}
\]
Note that the first term in the r.hs. above is half the quantity $\Xi_{1,1,2}(t)$ to be estimated, see \eqref{eee1}. Therefore, that term is absorbed properly. Instead, the  second quantity  satisfies
\[
\frac{\left(\lambda_{1}'(t)\right)^{2}\lambda_{3}(t)\lambda_{4}(t)}{\eta(t)}\in L^{1}\left(\{t\gg1 \}\right),
\]
whenever  $3p_{1}+(p_{3}+p_{4})<2,$ giving by \eqref{e16}. Next,  we have
\[
|\Xi_{3,2}(t)|\lesssim \left|\frac{\lambda_{2}'(t)\left(\lambda_{3}(t)\lambda_{4}(t)\right)^{1/2}}{\eta(t)(\lambda_{2}(t))^{1/2}}\right|\in L^{1}\left(\{t\gg1 \}\right),
\]
for $2p_{1}+p_{2}+p_{3}+p_{4}<2$, which is \eqref{e4}. Next, for $\Xi_{3,3}$    we obtain the bound
\[
\begin{split}
|\Xi_{3,3}(t)|&\lesssim \frac{\lambda_{3}'(t)\left(\lambda_{2}(t)\lambda_{4}(t)\right)^{1/2}}{\eta(t)(\lambda_{3}(t))^{1/2}}\in L^{1}\left(\{t\gg1 \}\right),
\end{split}
\]
 since $2p_{1}+p_{2}+p_{3}+p_{4}<2$ again.  Finally, we bound   $\Xi_{3,4}$ as follows:
 \[
 |\Xi_{3,4}(t)|\lesssim \frac{\lambda_{4}'(t)\left(\lambda_{2}(t)\lambda_{3}(t)\right)^{1/2}}{\eta(t)(\lambda_{4}(t))^{1/2}}\in L^{1}\left(\{t\gg1 \}\right),
 \]
once again by \eqref{e4}. We conclude 
\begin{equation}\label{main2.2}
\begin{split}
&\frac{1}{\eta(t)\lambda_{1}(t)}\int_{\mathbb{R}^{3}} \frac{u^{2}}{2} \phi_{\sigma}\left(\frac{x}{\lambda_{1}(t)}\right)\phi_{\delta_{1}}\left(\frac{x}{\lambda_{2}(t)}\right)\phi_{\delta_{2}}\left(\frac{y}{\lambda_{3}(t)}\right)\phi_{\delta_{3}}\left(\frac{z}{\lambda_{4}(t)}\right)\mathrm{d}x\mathrm{d}y\mathrm{d}z  \\
&= \Xi_{1,1,2}(t)\\
& \leq 2\frac{d\Xi(t)}{dt} + \Xi_{aux}(t),\\
\end{split}
\end{equation}
where $\Xi_{aux}(t) \in L^{1}\left(\{t\gg1 \}\right)$, with
\[
|\Xi_{aux}(t)| \lesssim \frac{1}{t^{1+\kappa_0} \log^{\epsilon_0}(t)},\quad \kappa_0,\epsilon_0>0.
\]
After this, we conclude essentially in the same form as in the 2D case.

\subsection{$L^2$ virial and $\dot H^1$ local decay} We follow the lines of Section \ref{Sect:4}, devoted this time to the 3D case. Some additional estimates are needed, and some care will be put in some particular parts of the proof.

\medskip

As in the 2D case, we consider the functional
\begin{equation*}
\mathcal{Q}(t):=\frac{1}{\eta(t)}\int_{\mathbb{R}^{3}} u^{2}(x,y,z,t)\psi_{\sigma'}\left(\frac{x}{\lambda_{1}(t)}\right)\phi_{\delta_{2}}\left(\frac{y}{\lambda_{3}(t)}\right)\phi_{\delta_{3}}\left(\frac{z}{\lambda_{4}(t)}\right)\mathrm{d}x\,\mathrm{d}y\,\mathrm{d}z,
\end{equation*}
that is clearly well defined for  solutions of the IVP  \eqref{ZK:Eq} with $d=3.$ Moreover, it is a bounded in time functional.

\medskip

In what follows, we consider the evolution of $\mathcal{Q}(t):$
\begin{equation}\label{main2.1}
\begin{split}
\frac{\mathrm{d}}{\mathrm{d}t}\mathcal{Q}(t)&=\underbrace{\frac{2}{\eta(t)}\int_{\mathbb{R}^{3}}u\partial_{t}u\psi_{\sigma'}\left(\frac{x}{\lambda_{1}(t)}\right)\phi_{\delta_{2}}\left(\frac{y}{\lambda_{3}(t)}\right)\phi_{\delta_{3}}\left(\frac{z}{\lambda_{4}(t)}\right)\,\mathrm{d}x\,\mathrm{d}y\,\mathrm{d}z}_{A_{1}(t)}\\
&\quad \underbrace{-\frac{\lambda_{3}'(t)}{\lambda_{3}(t)\eta(t)}\int_{\mathbb{R}^{3}}u^{2}\psi_{\sigma'}\left(\frac{x}{\lambda_{1}(t)}\right)\phi_{\delta_{2}}'\left(\frac{y}{\lambda_{3}(t)}\right)\left(\frac{y}{\lambda_{3}(t)}\right)\phi_{\delta_{3}}\left(\frac{z}{\lambda_{4}(t)}\right)\,\mathrm{d}x\,\mathrm{d}y\,\mathrm{d}z}_{A_{2}(t)}\\
&\quad \underbrace{-\frac{\lambda_{1}'(t)}{
		\lambda_{1}(t)\eta(t)}\int_{\mathbb{R}^{3}}u^{2}\phi_{\sigma'}\left(\frac{x}{\lambda_{1}(t)}\right)\left(\frac{x}{\lambda_{1}(t)}\right)\phi_{\delta_{2}}\left(\frac{y}{\lambda_{3}(t)}\right)\phi_{\delta_{3}}\left(\frac{z}{\lambda_{4}(t)}\right)\,\mathrm{d}x\,\mathrm{d}y\,\mathrm{d}z}_{A_{3}(t)}\\
	&\quad \underbrace{-\frac{\lambda_{4}'(t)}{\lambda_{4}(t)\eta(t)}\int_{\mathbb{R}^{3}}u^{2}\psi_{\sigma'}\left(\frac{x}{\lambda_{1}(t)}\right)\phi_{\delta_{2}}\left(\frac{y}{\lambda_{3}(t)}\right)\phi_{\delta_{3}}'\left(\frac{z}{\lambda_{4}(t)}\right)\left(\frac{z}{\lambda_{4}(t)}\right)\,\mathrm{d}x\,\mathrm{d}y\,\mathrm{d}z}_{A_{4}(t)}\\
&\quad \underbrace{-\frac{\eta'(t)}{\eta^{2}(t)}\int_{\mathbb{R}^{3}}u^{2}\psi_{\sigma'}\left(\frac{x}{\lambda_{1}(t)}\right)\phi_{\delta_{2}}\left(\frac{y}{\lambda_{3}(t)}\right)\phi_{\delta_{3}}\left(\frac{z}{\lambda_{4}(t)}\right)\,\mathrm{d}x\,\mathrm{d}y\,\mathrm{d}z}_{A_{5}(t)}.
\end{split}
\end{equation}
Compared with the 2D case in \eqref{main2}, the term $A_4(t)$ is a new contribution. First,
\begin{equation*}
\begin{split}
&A_{1}(t)\\
&=\frac{2}{\eta(t)}\int_{\mathbb{R}^{3}}\partial_{x}u\Delta u\psi_{\sigma'}\left(\frac{x}{\lambda_{1}(t)}\right)\phi_{\delta_{2}}\left(\frac{y}{\lambda_{3}(t)}\right)\phi_{\delta_{3}}\left(\frac{z}{\lambda_{4}(t)}\right)\,\mathrm{d}x\mathrm{d}y\mathrm{d}z\\
&\quad + \frac{1}{\eta(t)}\int_{\mathbb{R}^{3}}\partial_{x}u u^{2}\psi_{\sigma'}\left(\frac{x}{\lambda_{1}(t)}\right)\phi_{\delta_{2}}\left(\frac{y}{\lambda_{3}(t)}\right)\phi_{\delta_{3}}\left(\frac{z}{\lambda_{4}(t)}\right)\,\mathrm{d}x\mathrm{d}y\mathrm{d}z\\
&\quad +\frac{2}{\eta(t)\lambda_{1}(t)}\int_{\mathbb{R}^{3}}u\left(\Delta u+\frac{u^{2}}{2}\right)\phi_{\sigma'}\left(\frac{x}{\lambda_{1}(t)}\right)\phi_{\delta_{2}}\left(\frac{y}{\lambda_{3}(t)}\right)\phi_{\delta_{3}}\left(\frac{z}{\lambda_{4}(t)}\right)\,\mathrm{d}x\mathrm{d}y\mathrm{d}z\\
&= A_{1,1}(t)+A_{1,2}(t)+A_{1,3}(t).
\end{split}
\end{equation*}
The term $A_{1,2}(t)$ will produce a local cubic term which will be the most difficult one to be controlled. For the moment, we concentrate ourselves in the term $A_{1,1}(t)$. We have
\begin{equation*}
\begin{split}
A_{1,1}(t)&= -\frac{1}{\eta(t)\lambda_{1}(t)}\int_{\mathbb{R}^{3}}\ \left(\partial_{x}u\right)^{2} \phi_{\sigma'}\left(\frac{x}{\lambda_{1}(t)}\right)\phi_{\delta_{2}}\left(\frac{y}{\lambda_{3}(t)}\right)\phi_{\delta_{3}}\left(\frac{z}{\lambda_{4}(t)}\right)\,\mathrm{d}x\mathrm{d}y\mathrm{d}z\\
&\, -\frac{2}{\eta(t)\lambda_{3}(t)}\int_{\mathbb{R}^{3}}\partial_{x}u\partial_{y}u\psi_{\sigma'}\left(\frac{x}{\lambda_{1}(t)}\right)\phi_{\delta_{2}}'\left(\frac{y}{\lambda_{3}(t)}\right)\phi_{\delta_{3}}\left(\frac{z}{\lambda_{4}(t)}\right)\,\mathrm{d}x\mathrm{d}y\mathrm{d}z\\
&\, +\frac{1}{\eta(t)\lambda_{1}(t)}\int_{\mathbb{R}^{3}}\ \left(\partial_{y}u\right)^{2} \phi_{\sigma'}\left(\frac{x}{\lambda_{1}(t)}\right)\phi_{\delta_{2}}\left(\frac{y}{\lambda_{3}(t)}\right)\phi_{\delta_{3}}\left(\frac{z}{\lambda_{4}(t)}\right)\,\mathrm{d}x\mathrm{d}y\mathrm{d}z\\
&\,+\frac{1}{\eta(t)\lambda_{1}(t)}\int_{\mathbb{R}^{3}}\ \left(\partial_{z}u\right)^{2} \phi_{\sigma'}\left(\frac{x}{\lambda_{1}(t)}\right)\phi_{\delta_{2}}\left(\frac{y}{\lambda_{3}(t)}\right)\phi_{\delta_{3}}\left(\frac{z}{\lambda_{4}(t)}\right)\,\mathrm{d}x\mathrm{d}y\mathrm{d}z \\
&\, -\frac{2}{\eta(t)\lambda_{4}(t)}\int_{\mathbb{R}^{3}}\ \partial_{x}u\partial_{z}u \psi_{\sigma'}\left(\frac{x}{\lambda_{1}(t)}\right)\phi_{\delta_{2}}\left(\frac{y}{\lambda_{3}(t)}\right)\phi_{\delta_{3}}'\left(\frac{z}{\lambda_{4}(t)}\right)\,\mathrm{d}x\mathrm{d}y\mathrm{d}z\\
&=A_{1,1,1}(t)+A_{1,1,2}(t)+A_{1,1,3}(t)+ A_{1,1,4}(t)+A_{1,1,5}(t).
\end{split}
\end{equation*}
We have $A_{1,1,5}(t)$ as the new contribution in 3D, but the reminders also need some care because one does not obtain the same conditions on the parameters as in the previous subsection.

\medskip

From this expression we will only focus on estimate $A_{1,1,2}$ and $A_{1,1,5},$   since $A_{1,1,1}, A_{1,1,3}$ and $A_{1,1,4}$ are part of the quantities to be estimated. So that,
\begin{equation*}
\begin{split}
A_{1,1,2}(t)&\lesssim \|u\|_{H^{1}_{xyz}}^{2}\frac{1}{\eta(t)\lambda_{3}(t)}\in L^{1}\left(\{t\gg 1\}\right),
\end{split}
\end{equation*}
whenever $ p_{3}+r_{1}= p_3 +1-p_1>1$, valid thanks to \eqref{new1}.  Also,
\begin{equation*}
\begin{split}
A_{1,1,5}(t)&\lesssim \|u\|_{H^{1}_{xyz}}^{2}\frac{1}{\eta(t)\lambda_{4}(t)}\in L^{1}\left(\{t\gg 1\}\right),
\end{split}
\end{equation*}
whenever $p_{4}+1-p_1>1$, which is \eqref{new2}. Next,
\begin{equation*}
\begin{split}
A_{1,3}(t) &= -\frac{2}{\eta(t)\lambda_{1}(t)}\int_{\mathbb{R}^{3}}\left(\partial_{x}u\right)^{2}\phi_{\sigma'}\left(\frac{x}{\lambda_{1}(t)}\right)\phi_{\delta_{2}}\left(\frac{y}{\lambda_{3}(t)}\right)\phi_{\delta_{3}}\left(\frac{z}{\lambda_{4}(t)}\right)\,\mathrm{d}x\,\mathrm{d}y\,\mathrm{d}z\\
&\quad  +\frac{1}{\eta(t)\lambda_{1}^{3}(t)}\int_{\mathbb{R}^{3}}u^{2}\phi_{\sigma'}''\left(\frac{x}{\lambda_{1}(t)}\right)\phi_{\delta_{2}}\left(\frac{y}{\lambda_{3}(t)}\right)\phi_{\delta_{3}}\left(\frac{z}{\lambda_{4}(t)}\right)\,\mathrm{d}x\,\mathrm{d}y\,\mathrm{d}z\\
&\quad  -\frac{2}{\eta(t)\lambda_{1}(t)}\int_{\mathbb{R}^{3}}\left(\partial_{y}u\right)^{2}\phi_{\sigma'}\left(\frac{x}{\lambda_{1}(t)}\right)\phi_{\delta_{2}}\left(\frac{y}{\lambda_{3}(t)}\right)\phi_{\delta_{3}}\left(\frac{z}{\lambda_{4}(t)}\right)\,\mathrm{d}x\,\mathrm{d}y\,\mathrm{d}z\\
&\quad + \frac{1}{\eta(t)\lambda_{1}(t)\lambda_{3}^{2}(t)}\int_{\mathbb{R}^{3}}u^{2}\phi_{\sigma'}\left(\frac{x}{\lambda_{1}(t)}\right)\phi_{\delta_{2}}''\left(\frac{y}{\lambda_{3}(t)}\right)\phi_{\delta_{3}}\left(\frac{z}{\lambda_{4}(t)}\right)\,\mathrm{d}x\,\mathrm{d}y\,\mathrm{d}z\\
&\quad - \frac{2}{\eta(t)\lambda_{1}(t)}\int_{\mathbb{R}^{3}}\left(\partial_{z}u\right)^{2}\phi_{\sigma'}\left(\frac{x}{\lambda_{1}(t)}\right)\phi_{\delta_{2}}\left(\frac{y}{\lambda_{3}(t)}\right)\phi_{\delta_{3}}\left(\frac{z}{\lambda_{4}(t)}\right)\,\mathrm{d}x\,\mathrm{d}y\,\mathrm{d}z\\
&\quad +\frac{1}{\eta(t)\lambda_{1}(t)\lambda_{4}^{2}(t)}\int_{\mathbb{R}^{3}}u^{2}\phi_{\sigma'}\left(\frac{x}{\lambda_{1}(t)}\right)\phi_{\delta_{2}}\left(\frac{y}{\lambda_{3}(t)}\right)\phi_{\delta_{3}}''\left(\frac{z}{\lambda_{4}(t)}\right)\,\mathrm{d}x\,\mathrm{d}y\,\mathrm{d}z\\
&\quad + \frac{1}{\eta(t)\lambda_{1}(t)}\int_{\mathbb{R}^{3}}u^{3}\phi_{\sigma'}\left(\frac{x}{\lambda_{1}(t)}\right)\phi_{\delta_{2}}\left(\frac{y}{\lambda_{3}(t)}\right)\phi_{\delta_{3}}\left(\frac{z}{\lambda_{4}(t)}\right)\,\mathrm{d}x\,\mathrm{d}y\,\mathrm{d}z\\
&= A_{1,3,1}(t)+A_{1,3,2}(t)+A_{1,3,3}(t)+A_{1,3,4}(t)+A_{1,3,5}(t)+A_{1,3,6}(t)  +A_{1,3,7}(t).
\end{split}
\end{equation*}
We will provide upper bounds for the terms $A_{1,3,2},A_{1,3,4}$, and $A_{1,3,6}$ since the terms $A_{1,3,1},A_{1,3,3}$ and $A_{1,3,5}$ are part of the quantities to be estimated.  The term $A_{1,3,7}$  will be described   below  since it represents the most harder term to estimate at this step.

\medskip

\noindent
First, similar to the 2D case,
\[
|A_{1,3,2}(t)|\lesssim \frac{1}{\eta(t)\lambda_{1}^{3}(t)}\in L^{1}\left(\{t\gg 1\}\right),
\]
since $p_{1}>0.$ Second,
\[
|A_{1,3,4}(t)|\lesssim \frac{1}{\eta(t)\lambda_{1}(t)\lambda_{3}^{2}(t)}\in L^{1}\left(\{t\gg 1\}\right),
\]
since $p_{3}>0.$ Third,
\[
|A_{1,3,6}(t)| \lesssim \frac{1}{\eta(t)\lambda_{1}(t)\lambda_{4}^{2}(t)}\in L^{1}\left(\{t\gg 1\}\right),
\]
since $p_{4}>0.$ Only the terms $A_{1,2}$ and $A_{1,3,7}$ remain to be bounded, but both are similar in nature and it is only necessary to estimate one of them.

\medskip

Now, we come back to \eqref{main2.1}. Concerning $A_{2}, A_{3}$, $A_{4}$ and $A_5$, we have in the first place the following  bounds: for $A_{2}$  it verifies that
\begin{equation*}
\begin{split}
|A_{2}(t)| &\lesssim \left|\frac{\lambda_{3}'(t)}{\eta(t)\lambda_{3}(t)}\right| \lesssim \frac{1}{t\eta(t)}\in L^{1}\left(\{t\gg 1\}\right),
\end{split}
\end{equation*}
since $r_{1}>0.$ Also, for $A_{3}$ a quite similar  results holds, that is,
\begin{equation*}
\begin{split}
| A_{3}(t)| &\lesssim \left|\frac{\lambda_{1}'(t)}{\eta(t)\lambda_{1}(t)}\right| \lesssim \frac{1}{t^{1+r_{1}}\ln ^{r_{2}}t}\in L^{1}\left(\{t\gg 1\}\right),
\end{split}
\end{equation*}
since $r_{1}>0.$ Next, for $A_{4}$ and $A_5$, the bound is exactly the same, so we skip it. This ends the estimation of all terms, except $A_{1,2}$ and $A_{1,3,7}$ which contain cubic powers.

\medskip

Finally, we  show how to handle the terms with a cubic power. We proceed as follows: let $\epsilon>0$ to be fixed later on. Write $|u|^3=(\epsilon^{-1/4} |u|^{5/2}) (\epsilon^{1/4} |u|^{1/2})$. Using Young's inequality with $p=4/3$ and $p'=4$, it holds
\begin{equation}\label{auxiliar_1}
\begin{split} &\int_{\mathbb{R}^{3}}|u|^{3}\phi_{\sigma'}\left(\frac{x}{\lambda_{1}(t)}\right)\phi_{\delta_{2}}\left(\frac{y}{\lambda_{3}(t)}\right)\phi_{\delta_{3}}\left(\frac{z}{\lambda_{4}(t)}\right)\,\mathrm{d}x\,\mathrm{d}y\,\mathrm{d}z\\
&\leq \frac{3}{4\epsilon ^{1/3}} \int_{\mathbb{R}^{3}}|u|^{10/3}\phi_{\sigma'}\left(\frac{x}{\lambda_{1}(t)}\right)\phi_{\delta_{2}}\left(\frac{y}{\lambda_{3}(t)}\right)\phi_{\delta_{3}}\left(\frac{z}{\lambda_{4}(t)}\right)\,\mathrm{d}x\,\mathrm{d}y\,\mathrm{d}z\\
&\quad +  \frac{\epsilon}{4}\int_{\mathbb{R}^{3}}u^{2}\phi_{\sigma'}\left(\frac{x}{\lambda_{1}(t)}\right)\phi_{\delta_{2}}\left(\frac{y}{\lambda_{3}(t)}\right)\phi_{\delta_{3}}\left(\frac{z}{\lambda_{4}(t)}\right)\,\mathrm{d}x\,\mathrm{d}y\,\mathrm{d}z.
\end{split}
\end{equation}
From  now on we will focus our attention to provide an adequate upper bound for the first term in the last line of \eqref{auxiliar_1}. In this sense, following the same procedure as in \eqref{chi_chi},
\begin{equation*}
\begin{split}
& \int_{\mathbb{R}^{3}}|u|^{\frac{10}{3}}\phi_{\sigma'}\left(\frac{x}{\lambda_{1}(t)}\right)\phi_{\delta_{2}}\left(\frac{y}{\lambda_{3}(t)}\right)\phi_{\delta_{3}}\left(\frac{z}{\lambda_{4}(t)}\right)\,\mathrm{d}x\,\mathrm{d}y\,\mathrm{d}z\\
&=\sum_{(m_{1},m_{2},m_{3})\in\mathbb{Z}^{3}}\int_{m_{3}}^{m_{3}+1}\int_{m_{2}}^{m_{2}+1}\int_{m_{1}}^{m_{1}+1}|u|^{\frac{10}{3}}\phi_{\sigma'}\left(\frac{x}{\lambda_{1}(t)}\right)\phi_{\delta_{2}}\left(\frac{y}{\lambda_{3}(t)}\right)\phi_{\delta_{3}}\left(\frac{z}{\lambda_{4}(t)}\right)\,\mathrm{d}x\,\mathrm{d}y\,\mathrm{d}z\\
&\leq \sum_{(m_{1},m_{2},m_{3})\in\mathbb{Z}^{3}}\int_{\mathbb{R}^{3}}|u|^{\frac{10}{3}}\chi_{m_{1}}(x)\chi_{m_{2}}(y)\chi_{m_{3}}(z)\phi_{\sigma'}\left(\frac{x}{\lambda_{1}(t)}\right)\phi_{\delta_{2}}\left(\frac{y}{\lambda_{3}(t)}\right)\phi_{\delta_{3}}\left(\frac{z}{\lambda_{4}(t)}\right)\,\mathrm{d}x\,\mathrm{d}y\,\mathrm{d}z\\
&\leq \sum_{(m_{1},m_{2},m_{3})\in\mathbb{Z}^{3}}\int_{\mathbb{R}^{3}}\left(|u|\chi_{m_{1}}(x)\chi_{m_{2}}(y)\chi_{m_{3}}(z)\right)^{\frac{10}{3}}\phi_{\sigma'}\left(\frac{x}{\lambda_{1}(t)}\right)\phi_{\delta_{2}}\left(\frac{y}{\lambda_{3}(t)}\right)\phi_{\delta_{3}}\left(\frac{z}{\lambda_{4}(t)}\right)\,\mathrm{d}x\,\mathrm{d}y\,\mathrm{d}z\\
&\leq \sum_{(m_{1},m_{2},m_{3})\in\mathbb{Z}^{3}}\left\|u\chi_{m_{1}}\chi_{m_{2}}\chi_{m_{3}}\right\|_{L^{\frac{10}{3}}_{xyz}}^{\frac{10}{3}}\Lambda_{m_{1}}\Lambda_{m_{2}}\Lambda_{m_{3}},
\end{split}
\end{equation*}
where
\begin{equation*}
\Lambda_{m_{1}}:=\max_{x\in[m_{1},m_{1}+1]}\phi_{\sigma'}\left(\frac{x}{\lambda_{1}(t)}\right),\qquad \Lambda_{m_{2}}:=\max_{y\in[m_{2},m_{2}+1]}\phi_{\delta_{1}}\left(\frac{y}{\lambda_{3}(t)}\right),
\end{equation*}
and
\begin{equation*}
\Lambda_{m_{3}}:=\max_{z\in[m_{3},m_{3}+1]}\phi_{\delta_{3}}\left(\frac{z}{\lambda_{4}(t)}\right).
\end{equation*}
Now, by the Gagliardo-Nirenberg-Sobolev  inequality  in its optimal form, we get
\begin{equation}\label{nlp1}
\begin{split}
&\sum_{(m_{1},m_{2},m_{3})\in\mathbb{Z}^{3}}\left\|u\chi_{m_{1}}\chi_{m_{2}}\chi_{m_{3}}\right\|_{L^{\frac{10}{3}}_{xyz}}^{\frac{10}{3}}\Lambda_{m_{1}}\Lambda_{m_{2}}\Lambda_{m_{3}}\\
&\leq  \sum_{(m_{1},m_{2},m_{3})\in\mathbb{Z}^{3}}c^{\frac{10}{3}}_{\mathrm{opt}}\left\|\nabla\left(u\chi_{m_{1}}\chi_{m_{2}}\chi_{m_{3}}\right)\right\|_{L^{2}_{xyz}}^{2}\left\|u\chi_{m_{1}}\chi_{m_{2}}\chi_{m_{3}}\right\|_{L^{2}_{xyz}}^{\frac{4}{3}}\Lambda_{m_{1}}\Lambda_{m_{2}}\Lambda_{m_{3}}\\
&\leq \left\|u_{0}\right\|_{L^{2}_{xyz}}^{\frac{4}{3}} \sum_{(m_{1},m_{2},m_{3})\in\mathbb{Z}^{3}}c^{\frac{10}{3}}_{\mathrm{opt}}\left\|\nabla\left(u\chi_{m_{1}}\chi_{m_{2}}\chi_{m_{3}}\right)\right\|_{L^{2}_{xyz}}^{2}\Lambda_{m_{1}}\Lambda_{m_{2}}\Lambda_{m_{3}},
\end{split}
\end{equation}
where $c_{\mathrm{opt}}$ denotes the optimal constant for dimension $d=3$. Also,
\begin{equation*}
\begin{split}
\left\|\nabla\left(u\chi_{m_{1}}\chi_{m_{2}}\chi_{m_{3}}\right)\right\|_{L^{2}_{xyz}}&=\left\|\chi_{m_{1}}\chi_{m_{2}}\chi_{m_{3}}\nabla u+u\nabla(\chi_{m_{1}}\chi_{m_{2}}\chi_{m_{3}})\right\|_{L^{2}_{xyz}}\\
&\leq \left\|\chi_{m_{1}}\chi_{m_{2}}\chi_{m_{3}}\nabla u\right\|_{L^{2}_{xyz}}+\left\|u\nabla(\chi_{m_{1}}\chi_{m_{2}}\chi_{m_{3}})\right\|_{L^{2}_{xyz}}.
\end{split}
\end{equation*}
Aditionally, an analysis similar to the employed in \eqref{e1.1}-\eqref{e1.7} allow us to obtain the following bounds:
	\begin{equation*}
\Lambda_{m_{1}} \leq  C\min_{x\in[m_{1},m_{1}+1]}\phi_{\sigma'}\left(\frac{x}{\lambda_{1}(t)}\right),\quad\mbox{for all} \; x\in\mathbb{R},
\end{equation*}
	\begin{equation*}
\Lambda_{m_{2}}\leq C \min_{y\in[m_{2},m_{2}+1]}\phi_{\delta_{2}}\left(\frac{y}{\lambda_{3}(t)}\right),\quad\mbox{for all} \; y\in\mathbb{R},
\end{equation*}
and
	\begin{equation*}
\Lambda_{m_{3}}\leq  C \min_{z\in[m_{3},m_{3}+1]}\phi_{\delta_{3}}\left(\frac{z}{\lambda_{4}(t)}\right),\quad\mbox{for all} \; z\in\mathbb{R}.
\end{equation*}
Next, we go back to \eqref{nlp1}. Incorporating the previous computations,
\begin{equation*}
\begin{split}
&\left\|u_{0}\right\|_{L^{2}_{xyz}}^{\frac{4}{3}} \sum_{(m_{1},m_{2},m_{3})\in\mathbb{Z}^{3}}c^{\frac{10}{3}}_{\mathrm{opt}}\left\|\nabla\left(u\chi_{m_{1}}\chi_{m_{2}}\chi_{m_{3}}\right)\right\|_{L^{2}_{xyz}}^{2}\Lambda_{m_{1}}\Lambda_{m_{2}}\Lambda_{m_{3}}\\
&\leq 2\left\|u_{0}\right\|_{L^{2}_{xyz}}^{\frac{4}{3}} \sum_{(m_{1},m_{2},m_{3})\in\mathbb{Z}^{3}}c^{\frac{10}{3}}_{\mathrm{opt}}\left(           \left\|\chi_{m_{1}}\chi_{m_{2}}\chi_{m_{3}}\nabla u\right\|_{L^{2}_{xyz}}^2 + \left\|u\nabla(\chi_{m_{1}}\chi_{m_{2}}\chi_{m_{3}})\right\|_{L^{2}_{xyz}}^2 \right)\Lambda_{m_{1}}\Lambda_{m_{2}}\Lambda_{m_{3}}\\
&\leq \widetilde{c} \sum_{(m_{1},m_{2},m_{3})\in\mathbb{Z}^{3}}  \int_{\mathbb{R}^{3}}\left|\nabla u\right|^{2}\chi_{m_{1}}^{2}\chi_{m_{2}}^{2}\chi_{m_{3}}^{2}\phi_{\sigma'}\left(\frac{x}{\lambda_{1}(t)}\right)\phi_{\delta_{2}}\left(\frac{y}{\lambda_{3}(t)}\right)\phi_{\delta_{3}}\left(\frac{z}{\lambda_{4}(t)}\right)\,\mathrm{d}x\mathrm{d}y\mathrm{d}z\\
&\quad + \widetilde{c} \sum_{(m_{1},m_{2},m_{3})\in\mathbb{Z}^{3}} \int_{\mathbb{R}^{3}} u^{2}\left|\nabla \left(\chi_{m_{1}}\chi_{m_{2}}\chi_{m_{3}}\right)\right|^{2}\phi_{\sigma'}\left(\frac{x}{\lambda_{1}(t)}\right)\phi_{\delta_{2}}\left(\frac{y}{\lambda_{3}(t)}\right)\phi_{\delta_{3}}\left(\frac{z}{\lambda_{4}(t)}\right)\,\mathrm{d}x\mathrm{d}y\mathrm{d}z\\
&\leq  \widetilde{c}   \int_{\mathbb{R}^{3}}\left|\nabla u\right|^{2}\phi_{\sigma'}\left(\frac{x}{\lambda_{1}(t)}\right)\phi_{\delta_{2}}\left(\frac{y}{\lambda_{3}(t)}\right)\phi_{\delta_{3}}\left(\frac{z}{\lambda_{4}(t)}\right)\,\mathrm{d}x\mathrm{d}y\mathrm{d}z\\
&\quad + \widetilde{c} \sum_{(m_{1},m_{2},m_{3})\in\mathbb{Z}^{3}} \int_{\mathbb{R}^{3}} u^{2}\left|\nabla \left(\chi_{m_{1}}\chi_{m_{2}}\chi_{m_{3}}\right)\right|^{2}\phi_{\sigma'}\left(\frac{x}{\lambda_{1}(t)}\right)\phi_{\delta_{2}}\left(\frac{y}{\lambda_{3}(t)}\right)\phi_{\delta_{3}}\left(\frac{z}{\lambda_{4}(t)}\right)\,\mathrm{d}x\mathrm{d}y\mathrm{d}z.
\end{split}
\end{equation*}
%
%
To estimate the remainder $L^2$ term above we  consider a  smooth  function $\rho$  satisfying: $\rho\equiv 1$ on $[-1,2]$ and $\rho\equiv 0$  on $(-\infty,-2]\cup[3,\infty).$ Then, for $m_{1},m_{2},m_{3}\in\mathbb{Z},$ we set $\rho_{m_{1}}(x):=\rho(x-m_{1}),\,\,\rho_{m_{2}}(y):=\rho(y-m_{2}) ,$ and $\rho_{m_{3}}(z):=\rho(z-m_{3}).$

\medskip

Hence,
\begin{equation*}
\begin{split}
&\widetilde{c} \sum_{(m_{1},m_{2},m_{3})\in\mathbb{Z}^{3}} \int_{\mathbb{R}^{3}} u^{2}\left|\nabla \left(\chi_{m_{1}}\chi_{m_{2}}\chi_{m_{3}}\right)\right|^{2}\phi_{\sigma'}\left(\frac{x}{\lambda_{1}(t)}\right)\phi_{\delta_{2}}\left(\frac{y}{\lambda_{3}(t)}\right)\phi_{\delta_{3}}\left(\frac{z}{\lambda_{4}(t)}\right)\,\mathrm{d}x\mathrm{d}y\mathrm{d}z\\
&=\widetilde{c} \sum_{(m_{1},m_{2},m_{3})\in\mathbb{Z}^{3}} \int_{\mathbb{R}^{3}} u^{2}\left( \chi_{m_{1}}'\chi_{m_{2}}\chi_{m_{3}}\right)^{2}\phi_{\sigma'}\left(\frac{x}{\lambda_{1}(t)}\right)\phi_{\delta_{2}}\left(\frac{y}{\lambda_{3}(t)}\right)\phi_{\delta_{3}}\left(\frac{z}{\lambda_{4}(t)}\right)\,\mathrm{d}x\mathrm{d}y\mathrm{d}z\\
&\quad +\widetilde{c} \sum_{(m_{1},m_{2},m_{3})\in\mathbb{Z}^{3}} \int_{\mathbb{R}^{3}} u^{2}\left( \chi_{m_{1}}\chi_{m_{2}}'\chi_{m_{3}}\right)^{2}\phi_{\sigma'}\left(\frac{x}{\lambda_{1}(t)}\right)\phi_{\delta_{2}}\left(\frac{y}{\lambda_{3}(t)}\right)\phi_{\delta_{3}}\left(\frac{z}{\lambda_{4}(t)}\right)\,\mathrm{d}x\mathrm{d}y\mathrm{d}z\\
&\quad +\widetilde{c} \sum_{(m_{1},m_{2},m_{3})\in\mathbb{Z}^{3}} \int_{\mathbb{R}^{3}} u^{2}\left( \chi_{m_{1}}\chi_{m_{2}}\chi_{m_{3}}'\right)^{2}\phi_{\sigma'}\left(\frac{x}{\lambda_{1}(t)}\right)\phi_{\delta_{2}}\left(\frac{y}{\lambda_{3}(t)}\right)\phi_{\delta_{3}}\left(\frac{z}{\lambda_{4}(t)}\right)\,\mathrm{d}x\mathrm{d}y\mathrm{d}z\\
&\lesssim \widetilde{c} \sum_{(m_{1},m_{2},m_{3})\in\mathbb{Z}^{3}} \int_{\mathbb{R}^{3}} u^{2}\left( \rho_{m_{1}}\chi_{m_{2}}\chi_{m_{3}}\right)^{2}\phi_{\sigma'}\left(\frac{x}{\lambda_{1}(t)}\right)\phi_{\delta_{2}}\left(\frac{y}{\lambda_{3}(t)}\right)\phi_{\delta_{3}}\left(\frac{z}{\lambda_{4}(t)}\right)\,\mathrm{d}x\mathrm{d}y\mathrm{d}z\\
&\quad +\widetilde{c} \sum_{(m_{1},m_{2},m_{3})\in\mathbb{Z}^{3}} \int_{\mathbb{R}^{3}} u^{2}\left( \chi_{m_{1}}\rho_{m_{2}}\chi_{m_{3}}\right)^{2}\phi_{\sigma'}\left(\frac{x}{\lambda_{1}(t)}\right)\phi_{\delta_{2}}\left(\frac{y}{\lambda_{3}(t)}\right)\phi_{\delta_{3}}\left(\frac{z}{\lambda_{4}(t)}\right)\,\mathrm{d}x\mathrm{d}y\mathrm{d}z\\
&\quad +\widetilde{c} \sum_{(m_{1},m_{2},m_{3})\in\mathbb{Z}^{3}} \int_{\mathbb{R}^{3}} u^{2}\left( \chi_{m_{1}}\chi_{m_{2}}\rho_{m_{3}}\right)^{2}\phi_{\sigma'}\left(\frac{x}{\lambda_{1}(t)}\right)\phi_{\delta_{2}}\left(\frac{y}{\lambda_{3}(t)}\right)\phi_{\delta_{3}}\left(\frac{z}{\lambda_{4}(t)}\right)\,\mathrm{d}x\mathrm{d}y\mathrm{d}z\\
&\lesssim\widetilde{c} \int_{\mathbb{R}^{3}} u^{2}\phi_{\sigma'}\left(\frac{x}{\lambda_{1}(t)}\right)\phi_{\delta_{2}}\left(\frac{y}{\lambda_{3}(t)}\right)\phi_{\delta_{3}}\left(\frac{z}{\lambda_{4}(t)}\right)\,\mathrm{d}x\mathrm{d}y\mathrm{d}z.
\end{split}
\end{equation*}
Summarizing, for the cubic term we have proved that for all $\epsilon>0,$ the following inequality holds:
\begin{equation*}
\begin{split}
&\frac{1}{\eta(t)\lambda_{1}(t)}\int_{\mathbb{R}^{3}}|u|^{3}\phi_{\sigma'}\left(\frac{x}{\lambda_{1}(t)}\right)\phi_{\delta_{2}}\left(\frac{y}{\lambda_{3}(t)}\right)\phi_{\delta_{3}}\left(\frac{z}{\lambda_{4}(t)}\right)\,\mathrm{d}x\,\mathrm{d}y\,\mathrm{d}z\\
&\leq \frac{3 \widetilde{c} }{4\epsilon ^{1/3}\eta(t)\lambda_{1}(t)}  \int_{\mathbb{R}^{3}}\left|\nabla u\right|^{2}\phi_{\sigma'}\left(\frac{x}{\lambda_{1}(t)}\right)\phi_{\delta_{2}}\left(\frac{y}{\lambda_{3}(t)}\right)\phi_{\delta_{3}}\left(\frac{z}{\lambda_{4}(t)}\right)\,\mathrm{d}x\mathrm{d}y\mathrm{d}z \\
&\quad + \frac{C\epsilon}{4\eta(t)\lambda_{1}(t)}\int_{\mathbb{R}^{3}}u^{2}\phi_{\sigma'}\left(\frac{x}{\lambda_{1}(t)}\right)\phi_{\delta_{2}}\left(\frac{y}{\lambda_{3}(t)}\right)\phi_{\delta_{3}}\left(\frac{z}{\lambda_{4}(t)}\right)\,\mathrm{d}x\,\mathrm{d}y\,\mathrm{d}z,
\end{split}
\end{equation*}
where $C$ is a positive constant depending on $\widetilde{c}.$

\medskip

Note that, independent of the constant $\epsilon$ taken, the second term above satisfies:
\begin{equation*}
\frac{C\epsilon}{\eta(t)\lambda_{1}(t)}\int_{\mathbb{R}^{3}}u^{2}\phi_{\sigma'}\left(\frac{x}{\lambda_{1}(t)}\right)\phi_{\delta_{2}}\left(\frac{y}{\lambda_{3}(t)}\right)\phi_{\delta_{3}}\left(\frac{z}{\lambda_{4}(t)}\right)\,\mathrm{d}x\,\mathrm{d}y\,\mathrm{d}z\in L^{1}\left(\left\{
t\gg 1\right\}\right)
\end{equation*}
after choosing properly $\sigma'>0$ and using \eqref{main2.2}.
\medskip

Finally, we  obtain after  choosing $\epsilon>(3\widetilde{c}/2)^{3}$,  we obtain that
\begin{equation*}
\begin{split}
\int_{\{t\gg1 \}}\left(\frac{1}{\eta(t)\lambda_{1}(t)}\int_{\mathbb{R}^{3}}\left|\nabla u\right|^{2}\phi_{\sigma'}\left(\frac{x}{\lambda_{1}(t)}\right)\phi_{\delta_{2}}\left(\frac{y}{\lambda_{3}(t)}\right)\phi_{\delta_{3}}\left(\frac{z}{\lambda_{4}(t)}\right)\,\mathrm{d}x\mathrm{d}y\mathrm{d}z \right)\mathrm{d}t<\infty.
\end{split}
\end{equation*}
This result, together with \eqref{main2.2}, allow us to conclude Theorem \ref{Thmdim3H1}, in the same form as in the 2D case. Therefore,
\begin{equation*}
\liminf_{t\rightarrow \infty} \int_{\mathbb{R}^{3}}  \left(\left|\nabla u\right|^{2} +u^2 \right)(x,y,z,t)\phi_{\sigma'}\left(\frac{x}{\lambda_{1}(t)}\right)\phi_{\delta_{2}}\left(\frac{y}{\lambda_{3}(t)}\right)\phi_{\delta_{3}}\left(\frac{z}{\lambda_{4}(t)}\right)\,\mathrm{d}x\mathrm{d}y\mathrm{d}z =0.
\end{equation*}

\bigskip

\section{Decay in far far regions. Proof of Theorem \ref{Thmdim23L2}}\label{Sect:6}

This last section is devoted to the proof of Theorem \ref{Thmdim23L2}. As stated in the Introduction, the proof differs from the other proofs in this paper. In particular, we will need slightly different weighted functions, as stated in Section \ref{2}, Subsection \ref{2.3}. We will closely follow \cite{MPS}, with some key differences.

\subsection{$2D$ case} Fix any $p\geq 1$ and $\epsilon>0$. The proof consists of two independent decay estimates, one in a band of the form $|x|\sim t^p \log^{1+\epsilon} t$ and the other one for the band $|y|\sim t^p \log^{1+\epsilon} t$. Although both results are similar in nature, the proofs are slightly different, and some care is needed in both cases.

\medskip

\noindent
{\bf Case $|x|\sim t^p \log^{1+\epsilon} t$.} Let $\chi$ be the cut-off function introduced in \eqref{xi_especial}.  Additionally, we will consider  $\theta_{1}(t):=t^p \ln ^{1+\epsilon}t.$ So that,
it is clear that
\[
\frac{\theta_{1}'(t)}{\theta_{1}(t)}\sim\frac{1}{t}\quad \mbox{for}\quad t\gg 1.
\]
Note that unlike the previous analysis, the function $\theta_{1}^{-1}\in L^{1}\left(\{t\gg 1\}\right).$ So that, it suggests that  the proof will be obtained by exploiting  properties of the weighted function $\chi$  as we will see below.

\medskip

Firstly, we will estimate in the portion $x\sim -\theta_{1}(t).$  To estimate in the portion $x\sim\theta_{1}(t)$   the  procedure follows by  using an argument quite similar.

\medskip

Formally  we get after multiplying  the equation in \eqref{ZK:Eq} by $u\chi\left(\frac{x+\theta_{1}(t)}{\theta_{1}(t)}\right)$ that
\begin{equation}\label{partida}
\left(u\partial_{t}u+u\partial_{x}\Delta u + u^{2}\partial_{x}u\right)\chi\left(\frac{x+\theta_{1}(t)}{\theta_{1}(t)}\right)=0.
\end{equation}
Then after integrating in space we obtain the following identity:
\begin{equation}\label{6p2}
\begin{split}
&\frac{1}{2}\frac{\mathrm{d}}{\mathrm{d}t}\int_{\mathbb{R}^{2}} u^{2}\chi\left(\frac{x+\theta_{1}(t)}{\theta_{1}(t)}\right)\,\mathrm{d}x\,\mathrm{d}y \quad \underbrace{-\frac{1}{2}\int_{\mathbb{R}^{2}}u^{2}\chi'\left(\frac{x+\theta_{1}(t)}{\theta_{1}(t)}\right)\left(\frac{\theta_{1}'(t)}{\theta_{1}(t)}\right)\,\mathrm{d}x\,\mathrm{d}y}_{A_{1}(t)}\\
&\underbrace{+\frac{\theta_{1}'(t)}{2\theta_{1}(t)}\int_{\mathbb{R}^{2}}u^{2}\chi'\left(\frac{x+\theta_{1}(t)}{\theta_{1}(t)}\right)\left(\frac{x+\theta_{1}(t)}{\theta_{1}(t)}\right)\,\mathrm{d}x\,\mathrm{d}y}_{A_{2}(t)}\\
&
\underbrace{+\frac{3}{2\theta_{1}(t)}\int_{\mathbb{R}^{2}}\left(\partial_{x}u\right)^{2}\chi'\left(\frac{x+\theta_{1}(t)}{\theta_{1}(t)}\right)\mathrm{d}x\,\mathrm{d}y}_{A_{3}(t)}
\\
&\underbrace{+\frac{1}{2\theta_{1}(t)}\int_{\mathbb{R}^{2}}\left(\partial_{y}u\right)^{2}\chi'\left(\frac{x+\theta_{1}(t)}{\theta_{1}(t)}\right)\mathrm{d}x\,\mathrm{d}y}_{A_{4}(t)}\quad \underbrace{-\frac{1}{2\theta_{1}^{3}(t)}\int_{\mathbb{R}^{2}} u^{2}\chi'''\left(\frac{x+\theta_{1}(t)}{\theta_{1}(t)}\right)\mathrm{d}x\,\mathrm{d}y}_{A_{5}(t)}\\
&\underbrace{-\frac{1}{3\theta_{1}(t)}\int_{\mathbb{R}^{2}}u^{3}\chi'\left(\frac{x+\theta_{1}(t)}{\theta_{1}(t)}\right)\mathrm{d}x\,\mathrm{d}y}_{A_{6}(t)} \quad = \quad 0.
\end{split}
\end{equation}
Since  $\chi$ is monotone  decreasing,  then  $\chi'\left(\frac{x+\theta_{1}(t)}{\theta_{1}(t)}\right)\leq 0,$ and in   particular
$
\left(\frac{x+\theta_{1}(t)}{\theta_{1}(t)}\right)\chi'\left(\frac{x+\theta_{1}(t)}{\theta_{1}(t)}\right)\geq 0,$ so that, for $t\gg 1,$ the terms
$A_{1}(t) $ and $A_{2}(t)$ are positive.

\medskip

Instead, to estimate the terms $A_{3}$ and $A_{4}$ the scenario is quite different due to $\theta_{1}^{-1}\in L^{1}\left( \{t\gg 1\}\right).$ More precisely, we  obtain
\begin{equation*}
|A_{3}(t)|\lesssim_{\|u\|_{L^{\infty}_{t}H^{1}_{xy}}}\frac{1}{\theta_{1}(t)}\in L^{1}\left(\left\{t\gg 1\right\}\right).
\end{equation*}
Also,
\begin{equation*}
|A_{4}(t)|\lesssim_{\|u\|_{L^{\infty}_{t}H^{1}}}\frac{1}{\theta_{1}(t)}\in L^{1}\left(\left\{t\gg 1\right\}\right).
\end{equation*}
Next, 
\begin{equation*}
|A_{5}(t)|\lesssim_{\|u_{0}\|_{L^{2}}}\frac{1}{\theta_{1}^{3}(t)}\in L^{1}\left(\left\{t\gg 1\right\}\right).
\end{equation*}

To estimate $A_{6}$ we use the boundedness of the $H^1$ norm uniform in time and the fact that $\theta_{1}^{-1}\in L^{1}\left( \{t\gg 1\}\right)$; for the sake of brevity we omit the details here. In summary, we get
\begin{equation*}
|A_{6}(t)|\lesssim_{\|u\|_{L^{\infty}_{t}H^{1}}} \frac{1}{\theta_{1}(t)}\in L^{1}\left(\left\{t\gg 1\right\}\right).
\end{equation*}
Finally, we gather the previous estimates, that combined with the fact that $\theta_{1}^{-1}\notin L^{1}\left(\left\{t\gg 1\right\}\right)$  yield us to   conclude
that
\begin{equation}\label{s1}
\begin{split}
\int_{\{t\gg 1\}}\frac{1}{t}\left(\int_{\mathbb{R}^{2}}u^{2} \left| \chi'\left(\frac{x+\theta_{1}(t)}{\theta_{1}(t)}\right) \right| \,\mathrm{d}x\,\mathrm{d}y+
\int_{\mathbb{R}^{2}}u^{2}\chi'\left(\frac{x+\theta_{1}(t)}{\theta_{1}(t)}\right)\left(\frac{x+\theta_{1}(t)}{\theta_{1}(t)}\right)\,\mathrm{d}x\,\mathrm{d}y\right)\mathrm{d}t<\infty.
\end{split}
\end{equation}
Note that the  weighted function $\chi'\left(\frac{x+\theta_{1}(t)}{\theta_{1}(t)}\right)$ in the  first term in the l.h.s above  is supported on the region
$-2\theta_{1}(t)<x<-\theta_{1}(t),$ that is, $x\sim -\theta_{1}(t).$

\medskip

The lack of integrability of the function $t^{-1}$  for $t\gg1, $ implies that there exist a sequence of times for that the function in  \eqref{s1} converges to zero. More precisely, we ensure  that there exist a sequence of  positive times $(t_{n})_{n},$ such that $t_{n}\uparrow\infty$ as $n$ goes to infinity and satisfying
\begin{equation*}
\lim_{n\uparrow\infty}  \int_{\Lambda(t_{n})}u^{2}\,\mathrm{d}x\,\mathrm{d}y=0,
\end{equation*}
where ${\displaystyle \Lambda(t):=\left\{(x,y)\in\mathbb{R}^{2}\,|\, x\sim -\theta_{1}(t)\right\}}.$

\medskip

To prove the decay to zero in the right portion $x\sim \theta_{1}(t)$ is enough  to  apply  an argument similar  to  the  one described above,  but considering this time the  weighted function $\widetilde{\chi}(x):=\chi(-x),\,x\in\mathbb{R}.$

\medskip

Finally, we conclude \eqref{Strong} in this case by performing again an estimate as \eqref{6p2} but with weighted function $\tilde\chi$ nonnegative and supported in the interval $[-\frac34,\frac14]$, and using \eqref{bound_below}. The details are essentially in \cite{MPS}, and we skip them.

\medskip

\noindent
{\bf Case $|y|\sim t^p \log^{1+\epsilon} t$.} This case is similar to the previous one, with some differences. We have from \eqref{partida},
\begin{equation}\label{partida2}
\left(u\partial_{t}u+u\partial_{x}\Delta u + u^{2}\partial_{x}u\right)\chi\left(\frac{y+\theta_{1}(t)}{\theta_{1}(t)}\right)=0.
\end{equation}
Integrating in space and by parts, we get now
\begin{equation}\label{6p3}
\begin{split}
&\frac{1}{2}\frac{\mathrm{d}}{\mathrm{d}t}\int_{\mathbb{R}^{2}} u^{2}\chi\left(\frac{y+\theta_{1}(t)}{\theta_{1}(t)}\right)\,\mathrm{d}x\,\mathrm{d}y \quad \underbrace{-\frac{1}{2}\int_{\mathbb{R}^{2}}u^{2}\chi'\left(\frac{y+\theta_{1}(t)}{\theta_{1}(t)}\right)\left(\frac{\theta_{1}'(t)}{\theta_{1}(t)}\right)\,\mathrm{d}x\,\mathrm{d}y}_{A_{1}(t)}\\
&\underbrace{+\frac{\theta_{1}'(t)}{2\theta_{1}(t)}\int_{\mathbb{R}^{2}}u^{2}\chi'\left(\frac{y+\theta_{1}(t)}{\theta_{1}(t)}\right)\left(\frac{y+\theta_{1}(t)}{\theta_{1}(t)}\right)\,\mathrm{d}x\,\mathrm{d}y}_{A_{2}(t)}\\
&
\underbrace{+\frac{1}{\theta_{1}(t)}\int_{\mathbb{R}^{2}}\left(\partial_{x}u\partial_{y}u \right) \chi'\left(\frac{y+\theta_{1}(t)}{\theta_{1}(t)}\right)\mathrm{d}x\,\mathrm{d}y}_{A_{3}(t)}
\quad = \quad 0.
\end{split}
\end{equation}
The only new term here is $A_3$, for which we immediately have
\begin{equation*}
|A_{3}(t)|\lesssim_{\|u\|_{L^{\infty}_{t}H^{1}_{xy}}}\frac{1}{\theta_{1}(t)}\in L^{1}\left(\left\{t\gg 1\right\}\right).
\end{equation*}
The rest of the proof is the same as in the previous case.

\subsection{$3D$ case}

Similar to the $2D$ case,  we will describe how to obtain the decay to zero in the region $x\sim -\theta_{1}(t)$ and $y\sim -\theta_1(t)$. The decay in the remaining regions (in particular, in the $|z|\sim \theta_1(t)$ can be obtained by using quite similar arguments.

\medskip

As usual our starting point  is based on  energy estimates. So that,   a standard  procedure  allow us  to obtain the identity
\begin{equation*}
\begin{split}
&\frac{1}{2}\frac{\mathrm{d}}{\mathrm{d}t}\int_{\mathbb{R}^{3}} u^{2}\chi\left(\frac{x+\theta_{1}(t)}{\theta_{1}(t)}\right)\,\mathrm{d}x\,\mathrm{d}y\,\mathrm{d}z \quad \underbrace{-\frac{1}{2}\int_{\mathbb{R}^{3}}u^{2}\chi'\left(\frac{x+\theta_{1}(t)}{\theta_{1}(t)}\right)\left(\frac{\theta_{1}'(t)}{\theta_{1}(t)}\right)\,\mathrm{d}x\,\mathrm{d}y\,\mathrm{d}z}_{A_{1}(t)}\\
& \underbrace{+ \frac{\theta_{1}'(t)}{2\theta_{1}(t)}\int_{\mathbb{R}^{3}}u^{2}\chi'\left(\frac{x+\theta_{1}(t)}{\theta_{1}(t)}\right)\left(\frac{x+\theta_{1}(t)}{\theta_{1}(t)}\right)\,\mathrm{d}x\,\mathrm{d}y\mathrm{d}z}_{A_{2}(t)}\\
&  \underbrace{+ \frac{3}{2\theta_{1}(t)}\int_{\mathbb{R}^{3}}\left(\partial_{x}u\right)^{2}\chi'\left(\frac{x+\theta_{1}(t)}{\theta_{1}(t)}\right)\mathrm{d}x\,\mathrm{d}y\mathrm{d}z}_{A_{3}(t)}
\\
&+ \underbrace{\frac{1}{2\theta_{1}(t)}\int_{\mathbb{R}^{3}}\left(\partial_{y}u\right)^{2}\chi'\left(\frac{x+\theta_{1}(t)}{\theta_{1}(t)}\right)\mathrm{d}x\,\mathrm{d}y\,\mathrm{d}z}_{A_{4}(t)} \quad \underbrace{+\frac{1}{2\theta_{1}(t)}\int_{\mathbb{R}^{3}}\left(\partial_{z}u\right)^{2}\chi'\left(\frac{x+\theta_{1}(t)}{\theta_{1}(t)}\right)\mathrm{d}x\,\mathrm{d}y\,\mathrm{d}z}_{A_{5}(t)}\\
&\underbrace{-\frac{1}{2\theta_{1}^{3}(t)}\int_{\mathbb{R}^{3}} u^{2}\chi'''\left(\frac{x+\theta_{1}(t)}{\theta_{1}(t)}\right)\mathrm{d}x\,\mathrm{d}y\,\mathrm{d}z}_{A_{6}(t)} \quad\underbrace{-\frac{1}{3\theta_{1}(t)}\int_{\mathbb{R}^{3}}u^{3}\chi'\left(\frac{x+\theta_{1}(t)}{\theta_{1}(t)}\right)\mathrm{d}x\,\mathrm{d}y\,\mathrm{d}z}_{A_{7}(t)} \quad =\quad 0.
\end{split}
\end{equation*}
The constraints on $\chi$ implies that  for $t\gg1, $ the terms $A_{1}$ and $A_{2}$ are positive. Also, the terms $A_{3},A_{4},$ and $A_{5}$ satisfy the bounds
\begin{equation*}
A_{3}(t)\lesssim_{\|u\|_{L^{\infty}_{t}H^{1}}}\frac{1}{\theta_{1}(t)}\in L^{1}\left(\left\{t\gg 1\right\}\right),
\end{equation*}
\begin{equation*}
A_{4}(t)\lesssim_{\|u\|_{L^{\infty}_{t}H^{1}}}\frac{1}{\theta_{1}(t)}\in L^{1}\left(\left\{t\gg 1\right\}\right),
\end{equation*}
and
\begin{equation*}
A_{5}(t)\lesssim_{\|u\|_{L^{\infty}_{t}H^{1}}}\frac{1}{\theta_{1}(t)}\in L^{1}\left(\left\{t\gg 1\right\}\right).
\end{equation*}
For $A_{6}$ we have that
\begin{equation*}
A_{6}(t)\lesssim_{\|u_{0}\|_{L^{2}}}\frac{1}{\theta_{1}^{3}(t)}\in L^{1}\left(\left\{t\gg 1\right\}\right).
\end{equation*}
To estimate $A_{7}$ we can use the boundedness of the $H^1$ norm in time, exactly as in the 2D case. In summary, we get
\begin{equation*}
A_{7}(t)\lesssim_{\|u\|_{L^{\infty}_{t}H^{1}}} \frac{1}{\theta_{1}(t)}\in L^{1}\left(\left\{t\gg 1\right\}\right).
\end{equation*}
Finally, after gathering  the estimates  in this step that combined with the fact that $\theta_{1}^{-1}\notin L^{1}\left(\left\{t\gg 1\right\}\right)$  yield us to   conclude
that
\begin{equation}\label{e1.1.1}
\begin{split}
& \int_{\{t\gg 1\}}\frac{1}{t}\left(-\int_{\mathbb{R}^{3}}u^{2}\chi'\left(\frac{x+\theta_{1}(t)}{\theta_{1}(t)}\right)\,\mathrm{d}x\,\mathrm{d}y\,\mathrm{d}z\right.\\
& \qquad\qquad \quad\left. +\int_{\mathbb{R}^{3}}u^{2}\chi'\left(\frac{x+\theta_{1}(t)}{\theta_{1}(t)}\right)\left(\frac{x+\theta_{1}(t)}{\theta_{1}(t)}\right)\,\mathrm{d}x\,\mathrm{d}y\,\mathrm{d}z\right)\mathrm{d}t<\infty.
\end{split}
\end{equation}
From this point, the rest of the proof is the same as in the 2D case.

\medskip

Finally, the case $y\sim -\theta_1$ is obtained  in similar terms starting from \eqref{partida2}. We skip the details.

\bigskip

\section{Proof of Theorem \ref{Thmdim2L2} in the gKdV case}\label{A}

In this appendix our goal will be in  provide a  proof of the gKdV version of Theorem \ref{Thmdim2L2}, namely Theorem \ref{Thmdim2L2_KDV}. This new result complements \cite{MR3936126}.

\medskip

\begin{proof}[Proof of Theorem \ref{Thmdim2L2_KDV}]
We sketch the proof, following the proof of Theorem \ref{Thmdim2L2}. Using \eqref{XI2D}, we consider this time the functional
\begin{equation*}
\Xi(t):=\frac{1}{\eta(t)}\int_{\mathbb{R}}u(x,t)\psi_{\sigma}\left(\frac{\tilde x}{\lambda_{1}(t)}\right)\phi_{\delta_{1}}\left(\frac{\tilde x}{\lambda_{1}^q(t)}\right)\,\mathrm{d}x, \quad \tilde x:= x-\rho(t),
\end{equation*}
for $\lambda_1$, $\rho$ and $\eta$ given by
\begin{equation}\label{defns_gKdV}
\begin{aligned}
\lambda_{1}(t)=&~{} \frac{t^{b}}{\ln t},\quad \rho(t) =\pm t^n ,\quad \mbox{and} \quad \eta(t)=t^{m}\ln ^{2} t, \quad m +b=1,\quad b,m>0,\\
0<b \leq &~{} \min\left\{ \frac{p}{p+q(p-1)}, \frac{2}{2+q} , \frac{p}{2p-1}\right\}, \quad q>1, \quad 0\leq n \leq 1-\frac{b}2.
 \end{aligned}
\end{equation}
Also, \eqref{Ap7} and \eqref{Ap8} are satisfied.

\medskip

Here we have two cases. If $p=2$, then $b<\frac23$, by taking $q=1+\epsilon_0$, $\epsilon_0$ arbitrarily small. If $p=4$, one has $\frac{p}{p+q(p-1)} < \frac{2}{2+q}$ and we conclude $b<\frac47$ performing the same trick as before. We conclude $b<\frac{p}{2p-1}$ as the condition for the validity of Theorem \ref{Thmdim2L2_KDV}, exactly as in \eqref{final_gKdV}.

\medskip

Now, we estimate $\Xi$ and its derivative in time. First of all, by Cauchy-Schwarz inequality and \eqref{defns_gKdV} we obtain
\[
\sup_{t\gg 1} |\Xi(t)| \lesssim \sup_{t\gg 1} \frac{\left(\lambda_{1}^q(t)\right)^{1/2}}{\eta(t)} \lesssim  \sup_{t\gg 1} \frac1{t^{1-b-\frac12 bq} \ln^{2+\frac{q}2} t} <\infty. 
\]
In what follows, we compute and estimate the dynamics of $\Xi(t)$ in the long time regime. We will prove for $p=2,4$, and $C_0>0$,
\begin{equation}\label{dT_ppal_gKdV}
\begin{split}
\frac{1}{\lambda_{1}(t)\eta(t)}\int_{\mathbb{R}} u^{p} \psi_{\sigma}'\left(\frac{\tilde x}{\lambda_{1}(t)}\right)\phi_{\delta_{1}}\left(\frac{\tilde x}{\lambda_{1}^q(t)}\right)\,\mathrm{d}x &\le C_0 \frac{d\Xi}{dt}(t) + \Xi_{int}(t),
\end{split}
\end{equation}
where $\Xi_{int}(t)$ are terms that belong to $L^1\left(\{t\gg 1\}\right
)$. Once this result is proved, the rest of the proof is direct. We have
\begin{equation}\label{separation}
\begin{split}
& \frac{\mathrm{d}}{\mathrm{d}t}\Xi(t)\\
&=\frac{1}{\eta(t)}\int_{\mathbb{R}}\partial_{t}\left(u\psi_{\sigma}\left(\frac{\tilde x}{\lambda_{1}(t)}\right)\phi_{\delta_{1}}\left(\frac{\tilde x}{\lambda_{1}^q(t)}\right)\right)\,\mathrm{d}x -\frac{\eta'(t)}{\eta^{2}(t)}\int_{\mathbb{R}}u\psi_{\sigma}\left(\frac{\tilde{x}}{\lambda_{1}(t)}\right)\phi_{\delta_{1}}\left(\frac{\tilde x}{\lambda_{1}^q(t)}\right)\,\mathrm{d}x\\
&=: \Xi_{1}(t)+ \Xi_{2}(t).
\end{split}
\end{equation}
First, we bound  $\Xi_{2}$. In virtue of \eqref{eq1} the same analysis applied there  yields
\begin{equation}\label{2}
\begin{split}
|\Xi_{2}(t)|&\leq\left|\frac{\eta'(t)}{\eta^{2}(t)}\int_{\mathbb{R}}u\psi_{\sigma}\left(\frac{\tilde x}{\lambda_{1}(t)}\right)\phi_{\delta_{1}}\left(\frac{\tilde x}{\lambda^q_{1}(t)}\right)\,\mathrm{d}x\right| \lesssim \frac{(\lambda_1(t))^{q/2}}{t \eta(t)} =\frac{1}{t^{2-b -\frac{1}{2}bq} \ln^{2 +\frac{q}2} t}.
\end{split}
\end{equation}
We need $2-b -\frac{1}{2}bq \geq 1$, which is $b\leq \frac{2}{2+q}.$ Since \eqref{defns_gKdV} hold, the last term integrates. Thus,  $\Xi_{2}\in L^1(\{t\gg1\})$. Now,
\begin{equation}\label{separation_Xi_gKdV}
\begin{split}
\Xi_{1}(t)&=\frac{1}{\eta(t)}\int_{\mathbb{R}}\partial_{t}u\psi_{\sigma}\left(\frac{\tilde x}{\lambda_{1}(t)}\right)\phi_{\delta_{1}}\left(\frac{\tilde x}{\lambda_{1}^q(t)}\right)\,\mathrm{d}x \\
&\quad -\frac{\lambda_{1}'(t)}{\lambda_{1}(t)\eta(t)}\int_{\mathbb{R}}u \psi_{\sigma}'\left(\frac{\tilde x}{\lambda_{1}(t)}\right)\left(\frac{\tilde x}{\lambda_{1}(t)}\right)\phi_{\delta_{1}}\left(\frac{\tilde x}{\lambda_{1}^q(t)}\right)\,\mathrm{d}x\\
&\quad -q\frac{\lambda_{1}'(t)}{\lambda_{1}(t)\eta(t)}\int_{\mathbb{R}}u \psi_{\sigma}\left(\frac{\tilde x}{\lambda_{1}(t)}\right)\left(\frac{\tilde x}{\lambda_{1}^q(t)}\right)\phi_{\delta_{1}}'\left(\frac{\tilde x}{\lambda_{1}^q(t)}\right)\,\mathrm{d}x\\
&\quad -\frac{\rho'(t)}{\lambda_{1}(t)\eta(t)}\int_{\mathbb{R}}u \psi_{\sigma}'\left(\frac{\tilde x}{\lambda_{1}(t)}\right)\phi_{\delta_{1}}\left(\frac{\tilde x}{\lambda_{1}^q(t)}\right)\,\mathrm{d}x\\
&\quad -\frac{\rho'(t)}{\lambda_{1}^q(t)\eta(t)}\int_{\mathbb{R}}u \psi_{\sigma}\left(\frac{\tilde x}{\lambda_{1}(t)}\right)\phi_{\delta_{1}}'\left(\frac{\tilde x}{\lambda_{1}^q(t)}\right)\,\mathrm{d}x\\
&=: \Xi_{1,1}(t)+\Xi_{1,2}(t)+\Xi_{1,3}(t) +\Xi_{1,4}(t) +\Xi_{1,5}(t).
\end{split}
\end{equation}
Concerning to $\Xi_{1,1}$ we have by \eqref{gKdV}  and integration by  parts
\begin{equation}\label{separation_Xi1_gKdV}
\begin{split}
\Xi_{1,1}(t)&=-\frac{1}{\eta(t)}\int_{\mathbb{R}}\partial_{x}\left(\partial_x^2 u+u^p \right)\psi_{\sigma}\left(\frac{\tilde x}{\lambda_{1}(t)}\right)\phi_{\delta_{1}}\left(\frac{\tilde x}{\lambda^q_{1}(t)}\right)\,\mathrm{d}x\\
&=\frac{1}{\eta(t)\lambda_{1}(t)}\int_{\mathbb{R}}\partial_x^2 u\psi_{\sigma}'\left(\frac{\tilde x}{\lambda_{1}(t)}\right)\phi_{\delta_{1}}\left(\frac{\tilde x}{\lambda^q_{1}(t)}\right)\,\mathrm{d}x\\
& \quad +\frac{1}{\eta(t)\lambda^q_{1}(t)}\int_{\mathbb{R}} \partial_x^2 u\psi_{\sigma}\left(\frac{\tilde x}{\lambda_{1}(t)}\right)\phi_{\delta_{1}}'\left(\frac{\tilde x}{\lambda^q_{1}(t)}\right)\,\mathrm{d}x\\
&\quad +\frac{1}{\eta(t)\lambda_{1}(t)}\int_{\mathbb{R}}  u^{p}\psi_{\sigma}'\left(\frac{\tilde x}{\lambda_{1}(t)}\right)\phi_{\delta_{1}}\left(\frac{\tilde x}{\lambda_{1}^q(t)}\right)\,\mathrm{d}x \\
& \quad +\frac{1}{\eta(t)\lambda^q_{1}(t)}\int_{\mathbb{R}}  u^{p}\psi_{\sigma}\left(\frac{\tilde x}{\lambda_{1}(t)}\right)\phi_{\delta_{1}}'\left(\frac{\tilde x}{\lambda^q_{1}(t)}\right)\,\mathrm{d}x\\
&=: \Xi_{1,1,1}(t)+\Xi_{1,1,2}(t)+\Xi_{1,1,3}(t)+\Xi_{1,1,4}(t).
\end{split}
\end{equation}
For  $\Xi_{1,1,1}$ we have  after combining  integration by parts
\begin{equation*}
\begin{split}
\Xi_{1,1,1}(t)&=\frac{1}{\eta(t)\lambda_{1}^{3}(t)}\int_{\mathbb{R}}u\psi'''_{\sigma}
\left(\frac{\tilde x}{\lambda_{1}(t)}\right)\phi_{\delta_{1}}\left(\frac{\tilde x}{\lambda^q_{1}(t)}\right)\,\mathrm{d}x \\
& \quad +\frac{2}{\eta(t)\lambda_{1}^{2+q}(t)}\int_{\mathbb{R}}u\psi''_{\sigma}\left(\frac{\tilde x}{\lambda_{1}(t)}\right)\phi_{\delta_{1}}'\left(\frac{\tilde x}{\lambda^q_{1}(t)}\right)\mathrm{d}x\\
&\quad +\frac{1}{\eta(t)\lambda_{1}^{1+2q}(t)}\int_{\mathbb{R}}u\psi'_{\sigma}\left(\frac{\tilde x}{\lambda_{1}(t)}\right)\phi_{\delta_{1}}''\left(\frac{\tilde x}{\lambda^q_{1}(t)}\right)\mathrm{d}x.
\end{split}
\end{equation*}
First, we bound each term using Cauchy-Schwarz inequality, as follows: Since $q>1$,
\[
    |\Xi_{1,1,1}(t)|\lesssim  \frac{1}{\eta(t)\lambda_1^{5/2}(t)}+\frac{1}{\eta(t)\lambda_1^{3/2+q}(t)}+\frac{1}{\eta(t)\lambda_1^{1/2+2q}(t)} \lesssim  \frac{1}{\eta(t)\lambda_1^{5/2}(t)},
\]
which clearly integrates. Next, applying integration by parts,
\[
\begin{aligned}
 \Xi_{1,1,2}(t) = &~{} \frac{1}{\eta(t)\lambda_{1}^q(t)}\int_{\mathbb{R}} \partial_x^2 u\psi_{\sigma}\left(\frac{\tilde x}{\lambda_{1}(t)}\right)\phi_{\delta_{1}}'\left(\frac{\tilde x}{\lambda_{1}^q(t)}\right)\,\mathrm{d}x\\
 =&~{} \frac{1}{\eta(t)\lambda_{1}^{2+q}(t)}\int_{\mathbb{R}}u\psi_{\sigma}''\left(\frac{\tilde x}{\lambda_{1}(t)}\right)\phi_{\delta_{1}}'\left(\frac{\tilde x}{\lambda_{1}^q(t)}\right)\,\mathrm{d}x \\
&~{}   +\frac{2}{\eta(t)\lambda_{1}^{1+2q}(t)}\int_{\mathbb{R}}u\psi_{\sigma}'\left(\frac{\tilde x}{\lambda_{1}(t)}\right)\phi_{\delta_{1}}''\left(\frac{\tilde x}{\lambda_{1}^q(t)}\right)\,\mathrm{d}x \\
&~{} +\frac{1}{\eta(t)\lambda_{1}^{3q}(t)}\int_{\mathbb{R}}u\psi_{\sigma}\left(\frac{\tilde x}{\lambda_{1}(t)}\right)\phi_{\delta_{1}}'''\left(\frac{\tilde x}{\lambda_{1}^q(t)}\right)\,\mathrm{d}x,
\end{aligned}
\]
and the Cauchy-Schwarz inequality yields
\[
  \begin{aligned}
    |\Xi_{1,1,2}(t)|     \lesssim& {}~ \frac{1}{\eta(t)(\lambda_1(t))^{3/2+q}}+\frac{1}{\eta(t)(\lambda_1(t))^{1/2+2q}}+\frac{1}{\eta(t)(\lambda_1(t))^{5q/2}}.
  \end{aligned}
\]
Each term above integrates since $q>1$.
%
%
%
We emphasize that the term $\Xi_{1,1,3}$ in \eqref{separation_Xi1_gKdV}
\[
\Xi_{1,1,3}(t)= \frac{1}{2\eta(t)\lambda_{1}(t)}\int_{\mathbb{R}}  u^{p}\psi_{\sigma}'\left(\frac{\tilde x}{\lambda_{1}(t)}\right)\phi_{\delta_{1}}\left(\frac{\tilde x}{\lambda_{1}^q(t)}\right)\,\mathrm{d}x,
\]
is the term to be estimated after integrating in time. Therefore, it will be taken until the end of the proof.

\medskip

The therm $\Xi_{1,1,4}$ in \eqref{separation_Xi1_gKdV} satisfies de following estimate
\[
\begin{split}
|\Xi_{1,1,4}(t)|&\leq\left|\frac{1}{2\eta(t)\lambda_{1}^q(t)}\int_{\mathbb{R}}  u^{p}\psi_{\sigma}\left(\frac{\tilde x}{\lambda_{1}(t)}\right)\phi_{\delta_{1}}'\left(\frac{\tilde x}{\lambda_{1}^q(t)}\right)\,\mathrm{d}x\right| \lesssim \frac{1}{2\eta(t)\lambda_1^q(t)}.
\end{split}
\]
Since $q>1$, $\Xi_{1,1,4}\in L^1(\{t\gg1\})$. 

\medskip

Now, we focus our attention in the remaining terms in \eqref{separation_Xi_gKdV}. First, by means of Young's inequality,  we have for   $\epsilon>0$, $p=2,4$ and $p'=\frac{p}{p-1}$,
\begin{equation*}
\begin{split}
\left| \Xi_{1,2}(t)  \right| &= \left| \frac{\lambda_{1}'(t)}{\lambda_{1}(t)\eta(t)}\int_{\mathbb{R}}u \psi_{\sigma}'\left(\frac{\tilde x}{\lambda_{1}(t)}\right)\left(\frac{\tilde x}{\lambda_{1}(t)}\right)\phi_{\delta_{1}}\left(\frac{\tilde x}{\lambda_{1}^q(t)}\right)\,\mathrm{d}x \right| \\
&\leq \frac{1}{4\epsilon^{p/2}}\left|\frac{\lambda_{1}'(t)}{\lambda_{1}(t)\eta(t)}\right|\int_{\mathbb{R}}u^{p}\psi_{\sigma}'\left(\frac{\tilde x}{\lambda_{1}(t)}\right)
\phi_{\delta_{1}}\left(\frac{\tilde x}{\lambda_{1}^q(t)}\right)\,\mathrm{d}x\\
&\quad +c\epsilon^{p'/2}\left|\frac{\lambda_{1}'(t)}{\lambda_{1}(t)\eta(t)}\right|\int_{\mathbb{R}}
\psi_{\sigma}'\left(\frac{\tilde x}{\lambda_{1}(t)}\right)\left| \frac{\tilde x}{\lambda_{1}(t)}\right|^{p'}\phi_{\delta_{1}}\left(\frac{\tilde x}{\lambda_{1}^q(t)}\right)\,\mathrm{d}x\\
&\leq\frac{1}{4\epsilon^{p/2}}\left|\frac{\lambda_{1}'(t)}{\lambda_{1}(t)\eta(t)}\right|\int_{\mathbb{R}}u^{p}\psi_{\sigma}'\left(\frac{\tilde x}{\lambda_{1}(t)}\right)
\phi_{\delta_{1}}\left(\frac{\tilde x}{\lambda_{1}^q(t)}\right)\,\mathrm{d}x+c\epsilon^{p'/2}\left|\frac{\lambda_{1}'(t)}{\eta(t)}\right|,\\
\end{split}
\end{equation*}
so that, taking $\epsilon^{p/2}=\lambda'(t)>0$ for $t\gg1;$ it is clear that
\begin{equation*}
\begin{split}
\left|\Xi_{1,2}(t)\right|&\le \frac{1}{4\lambda_{1}(t)\eta(t)}\int_{\mathbb{R}}u^{2}\psi_{\sigma}'\left(\frac{\tilde x}{\lambda_{1}(t)}\right)
\phi_{\delta_{1}}\left(\frac{\tilde x}{\lambda_{1}^q(t)}\right)\,\mathrm{d}x +c\frac{\left(\lambda_{1}'(t)\right)^{1+\frac1{p-1}}}{\eta(t)} \\
&=:\frac{1}{4}\Xi_{1,1,3}(t)+\Xi^*_{1,2}(t).
\end{split}
\end{equation*}
Note that the first term in the r.h.s. is the quantity to be estimated.  The remaining term $\Xi^*_{1,2}$ integrates since $b\leq \frac{p}{2p-1}$, see \eqref{defns_gKdV}.

\medskip

Now, we consider the term $\Xi_{1,3}(t)$. Combining the properties attribute to $\phi_{\delta_{1}}$, the fact that $u\in L^2$ for $p=2$ and $u\in L^4$ for $p=4$, and Young's inequality we get for $\theta(t)=t^{2m/p}$,
\[
\begin{split}
\left| \Xi_{1,3}(t) \right|   &= q\left|\frac{\lambda_{1}'(t)}{\lambda_{1}(t)\eta(t)}\int_{\mathbb{R}}u \psi_{\sigma}\left(\frac{\tilde x}{\lambda_{1}(t)}\right)\left(\frac{\tilde x}{\lambda_{1}^q(t)}\right)\phi_{\delta_{1}}'\left(\frac{\tilde x}{\lambda^q_{1}(t)}\right)\,\mathrm{d}x \right| \\
&\leq q\left|\frac{\lambda_{1}'(t)}{\lambda_{1}(t)\eta(t)}\right|\int_{\mathbb{R}}|u| \psi_{\sigma}\left(\frac{\tilde x}{\lambda_{1}(t)}\right)\left|\frac{\tilde x}{\lambda_{1}^q(t)}\right|\left|\phi^\prime_{\delta_{1}}\left(\frac{\tilde x}{\lambda_{1}^q(t)}\right)\right| \,\mathrm{d}x \\
&\leq \left|\frac{\lambda_{1}'(t) \theta^{p/2}(t)}{\lambda_{1}(t)\eta(t)}\right|\int_{\mathbb{R}}u^{p} \psi_{\sigma}^{p}\left(\frac{\tilde x}{\lambda_{1}(t)}\right) \,\mathrm{d}x \\
&\quad +\left|\frac{\lambda_{1}'(t)}{\lambda_{1}(t)\eta(t)\theta^{p'/2}(t)}\right|\int_{\mathbb{R}}\left(\frac{\tilde x}{\lambda_{1}^q(t)}\right)^{p'}\left|\phi^\prime_{\delta_{1}}\left(\frac{\tilde x}{\lambda_{1}^q(t)}\right)\right|^{p'}\mathrm{d}x \\
&\lesssim \left|\frac{\lambda_{1}'(t)\theta^{p/2}(t)}{\lambda_{1}(t)\eta(t)}\right|   + \left|\frac{\lambda_{1}'(t)\lambda_1^{q}(t)}{\lambda_1(t)\eta(t)\theta^{p'/2}(t)}\right|.
\end{split}
\]
Hence,
\begin{equation}\label{13}
\begin{split}
\left| \Xi_{1,3}(t) \right|   &\lesssim \frac{1}{t\ln^2t}+\frac{1}{t^{2+\frac1{p-1} -b(1+q+\frac1{p-1})} \ln^{2+q} t},
\end{split}
\end{equation}
which integrates if $b\leq \frac{p}{p+q(p-1)}$, which is just \eqref{defns_gKdV}. Thus, $\Xi_{1,3}\in L^1(\{t\gg1\})$.

\medskip

Now,
\[
|\Xi_{1,4}(t)| \lesssim \frac{|\rho'(t)|}{\eta(t) \lambda_1^{1/2}(t)} \lesssim \frac1{t^{2-\frac b2-n} \ln^{\frac32} t},
\]
which integrates since $n\leq 1- \frac b2$. Finally,
\[
|\Xi_{1,5}(t)| \lesssim \frac{|\rho_1'(t)|}{\eta(t) \lambda_1^{q/2}(t)} \ll  \frac1{t^{2-\frac b2-n} \ln^{\frac32} t}\in L^1(\{t\gg1\}),
\]
since $n\leq 1-\frac{b}{2}.$

Therefore, once again we conclude the integrability in time. The rest of the proof is the same as in \eqref{ZK:Eq}, and we omit the details.

%
%
\end{proof}

\bigskip

{\footnotesize
\bibliographystyle{plain}
\bibliography{Ref-ZK.bib}

\begin{thebibliography}{10}

\bibitem{ACKM}
Miguel~A. Alejo, M.~F. Cortez, Chulkwang Kwak, and Claudio Mu\~{n}oz.
\newblock On the dynamics of zero-speed solutions for {C}amassa-{H}olm type
  equations, 2018.

\bibitem{MR3116324}
Miguel~A. Alejo and Claudio Mu\~{n}oz.
\newblock Nonlinear stability of {MK}d{V} breathers.
\newblock {\em Comm. Math. Phys.}, 324(1):233--262, 2013.

\bibitem{Bittencourt:1986}
J.~A. Bittencourt.
\newblock {\em Fundamentals of plasma physics}.
\newblock Pergamon Press, Oxford, 1986.

\bibitem{MR897729}
J.~L. Bona, P.~E. Souganidis, and W.~A. Strauss.
\newblock Stability and instability of solitary waves of {K}orteweg-de {V}ries
  type.
\newblock {\em Proc. Roy. Soc. London Ser. A}, 411(1841):395--412, 1987.

\bibitem{MR1616917}
J.~Bourgain.
\newblock Refinements of {S}trichartz' inequality and applications to
  {$2$}{D}-{NLS} with critical nonlinearity.
\newblock {\em Internat. Math. Res. Notices}, (5):253--283, 1998.

\bibitem{MR3946612}
Lucrezia Cossetti, Luca Fanelli, and Felipe Linares.
\newblock Uniqueness results for {Z}akharov-{K}uznetsov equation.
\newblock {\em Comm. Partial Differential Equations}, 44(6):504--544, 2019.

\bibitem{Cote:Munoz:Pilod:Simpson:2016}
Rapha\"{e}l C\^{o}te, Claudio Mu\~{n}oz, Didier Pilod, and Gideon Simpson.
\newblock Asymptotic stability of high-dimensional {Z}akharov-{K}uznetsov
  solitons.
\newblock {\em Arch. Ration. Mech. Anal.}, 220(2):639--710, 2016.

\bibitem{MR1378834}
Anne de~Bouard.
\newblock Stability and instability of some nonlinear dispersive solitary waves
  in higher dimension.
\newblock {\em Proc. Roy. Soc. Edinburgh Sect. A}, 126(1):89--112, 1996.

\bibitem{MR3678502}
Thomas Duyckaerts, Hao Jia, Carlos Kenig, and Frank Merle.
\newblock Soliton resolution along a sequence of times for the focusing energy
  critical wave equation.
\newblock {\em Geom. Funct. Anal.}, 27(4):798--862, 2017.

\bibitem{Faminskii:1995}
A.~V. Faminski{\i}.
\newblock The {C}auchy problem for the {Z}akharov-{K}uznetsov equation.
\newblock {\em Differentsialnye Uravneniya}, 31(6):1070--1081, 1103, 1995.

\bibitem{MR2950463}
Luiz~G. Farah, Felipe Linares, and Ademir Pastor.
\newblock A note on the 2{D} generalized {Z}akharov-{K}uznetsov equation:
  local, global, and scattering results.
\newblock {\em J. Differential Equations}, 253(8):2558--2571, 2012.

\bibitem{Farah2}
Luiz~Gustavo Farah, Justin Holmer, Svetlana Roudenko, and Kai Yang.
\newblock Blow-up in finite or infinite time of the 2{D} cubic
  {Z}akharov-{K}uznetsov equation, 2018.

\bibitem{Farah1}
Luiz~Gustavo Farah, Justin Holmer, Svetlana Roudenko, and Kai Yang.
\newblock Asymptotic stability of solitary waves of the 3{D} quadratic
  {Z}akharov-{K}uznetsov equation, 2020.

\bibitem{Grunrock:Herr:2015}
Axel Gr\"{u}nrock and Sebastian Herr.
\newblock The {F}ourier restriction norm method for the {Z}akharov-{K}uznetsov
  equation.
\newblock {\em Discrete Contin. Dyn. Syst.}, 34(5):2061--2068, 2014.

\bibitem{Herr:Kinoshita:Arxiv:2020}
Sebastian Herr and Shinya Kinoshita.
\newblock Subcritical well-posedness results for the {Z}akharov-{K}uznetsov
  equation in dimension three and higher, 2020.

\bibitem{MR2590690}
Carlos~E. Kenig and Yvan Martel.
\newblock Asymptotic stability of solitons for the {B}enjamin-{O}no equation.
\newblock {\em Rev. Mat. Iberoam.}, 25(3):909--970, 2009.

\bibitem{Kinoshita:Arxiv:2019}
Shinya Kinoshita.
\newblock Global {W}ell-posedness for the {C}auchy problem of the
  {Z}akharov-{K}uznetsov equation in {2}{D}, 2019.

\bibitem{MR3630087}
Micha\l{} Kowalczyk, Yvan Martel, and Claudio Mu\~{n}oz.
\newblock Kink dynamics in the {$\phi^4$} model: asymptotic stability for odd
  perturbations in the energy space.
\newblock {\em J. Amer. Math. Soc.}, 30(3):769--798, 2017.

\bibitem{ZK}
E.~A. Kuznetsov and V.~E. Zakharov.
\newblock On three dimensional solitons.
\newblock {\em Sov. Phys. JETP.}, 39:285, 1974.

\bibitem{MR3960140}
Chulkwang Kwak, Claudio Mu\~{n}oz, Felipe Poblete, and Juan~C. Pozo.
\newblock The scattering problem for {H}amiltonian {ABCD} {B}oussinesq systems
  in the energy space.
\newblock {\em J. Math. Pures Appl. (9)}, 127:121--159, 2019.

\bibitem{Laedke:Spatschek:1982}
E.~W. Laedke and K.~H. Spatschek.
\newblock Nonlinear ion-acoustic waves in weak magnetic fields.
\newblock {\em Phys. Fluids}, 25(6):985--989, 1982.

\bibitem{Lannes:Linares:Saut:2013}
David Lannes, Felipe Linares, and Jean-Claude Saut.
\newblock The {C}auchy problem for the {E}uler-{P}oisson system and derivation
  of the {Z}akharov-{K}uznetsov equation.
\newblock In {\em Studies in phase space analysis with applications to {PDE}s},
  volume~84 of {\em Progr. Nonlinear Differential Equations Appl.}, pages
  181--213. Birkh\"{a}user/Springer, New York, 2013.

\bibitem{Linares:Pastor:2009}
Felipe Linares and Ademir Pastor.
\newblock Well-posedness for the two-dimensional modified
  {Z}akharov-{K}uznetsov equation.
\newblock {\em SIAM J. Math. Anal.}, 41(4):1323--1339, 2009.

\bibitem{Linares:Pastor:2011}
Felipe Linares and Ademir Pastor.
\newblock Local and global well-posedness for the 2{D} generalized
  {Z}akharov-{K}uznetsov equation.
\newblock {\em J. Funct. Anal.}, 260(4):1060--1085, 2011.

\bibitem{MR3308874}
Felipe Linares and Gustavo Ponce.
\newblock {\em Introduction to nonlinear dispersive equations}.
\newblock Universitext. Springer, New York, second edition, 2015.

\bibitem{MR3842873}
Felipe Linares and Gustavo Ponce.
\newblock On special regularity properties of solutions of the
  {Z}akharov-{K}uznetsov equation.
\newblock {\em Commun. Pure Appl. Anal.}, 17(4):1561--1572, 2018.

\bibitem{MR2486590}
Felipe Linares and Jean-Claude Saut.
\newblock The {C}auchy problem for the 3{D} {Z}akharov-{K}uznetsov equation.
\newblock {\em Discrete Contin. Dyn. Syst.}, 24(2):547--565, 2009.

\bibitem{MR1753061}
Yvan Martel and Frank Merle.
\newblock A {L}iouville theorem for the critical generalized {K}orteweg-de
  {V}ries equation.
\newblock {\em J. Math. Pures Appl. (9)}, 79(4):339--425, 2000.

\bibitem{MR1826966}
Yvan Martel and Frank Merle.
\newblock Asymptotic stability of solitons for subcritical generalized {K}d{V}
  equations.
\newblock {\em Arch. Ration. Mech. Anal.}, 157(3):219--254, 2001.

\bibitem{MR1896235}
Yvan Martel and Frank Merle.
\newblock Blow up in finite time and dynamics of blow up solutions for the
  {$L^2$}-critical generalized {K}d{V} equation.
\newblock {\em J. Amer. Math. Soc.}, 15(3):617--664, 2002.

\bibitem{MR1824989}
Frank Merle.
\newblock Existence of blow-up solutions in the energy space for the critical
  generalized {K}d{V} equation.
\newblock {\em J. Amer. Math. Soc.}, 14(3):555--578, 2001.

\bibitem{MR1047566}
Frank Merle and Yoshio Tsutsumi.
\newblock {$L^2$} concentration of blow-up solutions for the nonlinear
  {S}chr\"{o}dinger equation with critical power nonlinearity.
\newblock {\em J. Differential Equations}, 84(2):205--214, 1990.

\bibitem{Molinet:Pilod:2015}
Luc Molinet and Didier Pilod.
\newblock Bilinear {S}trichartz estimates for the {Z}akharov-{K}uznetsov
  equation and applications.
\newblock {\em Ann. Inst. H. Poincar\'{e} Anal. Non Lin\'{e}aire},
  32(2):347--371, 2015.

\bibitem{MR3936126}
Claudio Mu\~{n}oz and Gustavo Ponce.
\newblock Breathers and the dynamics of solutions in {K}d{V} type equations.
\newblock {\em Comm. Math. Phys.}, 367(2):581--598, 2019.

\bibitem{MR4021089}
Claudio Mu\~{n}oz and Gustavo Ponce.
\newblock On the asymptotic behavior of solutions to the {B}enjamin-{O}no
  equation.
\newblock {\em Proc. Amer. Math. Soc.}, 147(12):5303--5312, 2019.

\bibitem{MPS}
Claudio Mu\~{n}oz, Gustavo Ponce, and Jean-Claude Saut.
\newblock On the long time behavior of solutions to the {I}ntermediate {L}ong
  {W}ave equation, 2019.

\bibitem{Ribaud:Vento:2012}
Francis Ribaud and St\'{e}phane Vento.
\newblock Well-posedness results for the three-dimensional
  {Z}akharov-{K}uznetsov equation.
\newblock {\em SIAM J. Math. Anal.}, 44(4):2289--2304, 2012.

\bibitem{Sulem:Sulem:1999}
Catherine Sulem and Pierre-Louis Sulem.
\newblock {\em The nonlinear {S}chr\"{o}dinger equation}, volume 139 of {\em
  Applied Mathematical Sciences}.
\newblock Springer-Verlag, New York, 1999.
\newblock Self-focusing and wave collapse.

\bibitem{MR2091393}
Terence Tao.
\newblock On the asymptotic behavior of large radial data for a focusing
  non-linear {S}chr\"{o}dinger equation.
\newblock {\em Dyn. Partial Differ. Equ.}, 1(1):1--48, 2004.

\bibitem{Valet}
Fr\'ed\'eric Valet.
\newblock Asymptotic {K}-soliton-like {S}olutions of the {Z}akharov-{K}uznetsov
  type equations, 2020.

\end{thebibliography}
}

\end{document}